\documentclass[11pt,reqno]{article}
\usepackage[utf8]{inputenc}
\usepackage[english]{babel} 

\usepackage{amsmath}
\usepackage{amssymb}
\usepackage{amsthm}
\usepackage{amscd}
\usepackage[hyper]{amsbib}

\usepackage[matrix,arrow,curve]{xy}

\usepackage{stmaryrd}

\begin{document}
\righthyphenmin=2

\renewcommand{\refname}{References}
\renewcommand{\bibname}{Bibliography}
\renewcommand{\proofname}{Proof}

\newtheorem{lm}{Lemma}
\newtheorem{tm}{Theorem}
\newtheorem*{tm*}{Theorem}
\newtheorem*{prop}{Proposition}
\newtheorem*{mtm}{Main Theorem}
\newtheorem*{atm}{Another Presentation}
\newtheorem{cl}{Corollary}
\newtheorem{mcl}{Corollary}
\newtheorem*{cl*}{Corollary}
\theoremstyle{definition}
\newtheorem*{df}{Definition}
\theoremstyle{remark}
\newtheorem*{rk}{Remark}

\newcommand{\Card}{\mathop{\mathrm{Card}}\nolimits}
\newcommand{\Ker}{\mathop{\mathrm{Ker}}\nolimits}
\newcommand{\Cent}{\mathop{\mathrm{Cent}}\nolimits}
\newcommand{\E}{{\mathrm{E}}}
\newcommand{\St}{\mathop{\mathrm{St}}\nolimits}
\newcommand{\Sp}{\mathop{\mathrm{Sp}}\nolimits}
\newcommand{\Ep}{\mathop{\mathrm{Ep}}\nolimits}
\newcommand{\GL}{\mathop{\mathrm{GL}}\nolimits}
\newcommand{\Kt}{\mathop{\mathrm{K_2}}\nolimits}
\newcommand{\Ko}{\mathop{\mathrm{K_1}}\nolimits}
\newcommand{\Ho}{\mathop{\mathrm{H_1}}\nolimits}
\newcommand{\Ht}{\mathop{\mathrm{H_2}}\nolimits}
\newcommand{\epi}{\twoheadrightarrow}
\newcommand{\sgn}{\mathrm{sgn}}
\newcommand{\eps}[1]{\varepsilon_{#1}}
\newcommand{\lan}{\langle}
\newcommand{\ran}{\rangle}
\newcommand{\inv}{^{-1}}
\newcommand{\ur}[1]{\!\,^{(#1)}U_1}
\newcommand{\ps}[1]{\!\,^{(#1)}\!P_1}
\newcommand{\ls}[1]{\!\,^{(#1)}\!L_1}

\renewcommand{\labelenumi}{\theenumi{\rm)}}
\renewcommand{\theenumi}{\alph{enumi}}

\author{Andrei Lavrenov\footnote{The author acknowledges support of the RSCF project 14-11-00297 ``Decomposition of unipotents in reductive groups''.}}

\title{Relative symplectic Steinberg group}

\date{}

\maketitle

\begin{abstract}
We give two definitions of relative symplectic Steinberg group and show that they coincide.
\end{abstract}

\section*{Introduction}

The main result of the present paper is a ``relative version'' of the {\it another presentation} for symplectic Steinberg groups obtained in~\cite{L3}.

First, we give a definition of a relative symplectic group $\St\!\Sp_{2l}(R,\,I,\,\Gamma)$ for any form ideal $(I,\,\Gamma)$ in $R$. For maximal form parameter $\Gamma=I$ it has already appeared in~\cite{SCH-thesis} an is shown to have good homological properties. We also consider case of a maximal form parameter and establish another ``coordinate-free'' presentation for $\St\!\Sp_{2l}(R,\,I,\,\Gamma_\mathrm{max})=\St\!\Sp_{2l}(R,\,I)$. The last result is a relativisation of the~\cite{L3}, but it is not a generalisation, since we need more relations for the presentation. Moreover, in our proofs we use the results of~\cite{L3}.

Namely, the main theorem of~\cite{L3} states the following.

\begin{tm*}
Let $l\geq3$, then the symplectic Steinberg group $\St\!\Sp(2l,\,R)$ can be defined by the set of generators
\begin{multline*}
\big\{X(u,\,v,\,a)\,\big| u,\,v\in V,\ \text{$u$ is a column of}\\ \text{a symplectic elementary matrix},\ \lan u,\,v\ran=0,\ a\in R\big\}
\end{multline*}
and relations
\setcounter{equation}{0}
\renewcommand{\theequation}{P\arabic{equation}}
\begin{align}
&X(u,\,v,\,a)X(u,\,w,\,b)=X(u,\,v+w,\,a+b+\lan v,\,w\ran),\\
&\begin{aligned}X(u,\,va,\,0)=X(v,\,ua,\,0)\,\text{ where}&\text{ $v$ is also a column}\\ &\text{of a symplectic elementary matrix,}\end{aligned}\\
&\begin{aligned}X(u',\,v',\,b)X(u,\,v,\,a)X(u',\,v',\,b&)\inv=\\&=X(T(u',\,v',\,b)u,\,T(u',\,v',\,b)v,\,a),\end{aligned}
\end{align}
where $T(u,\,v,\,a)$ is an ESD-transformation $$w\mapsto w+u(\lan v,\,w\ran+a\lan u,\,w\ran)+v\lan u,\,w\ran.$$ For usual generators of the symplectic Steinberg group the following identities hold 
$$
X_{ij}(a)=X(e_i,\,e_{-j}a\eps{-j},\,0)\ \text{for $j\neq-i$},\qquad X_{i,-i}(a)=X(e_i,\,0,\,a).
$$
\end{tm*}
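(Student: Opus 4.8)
The plan is to present the theorem as a comparison of two group presentations. Write $\St=\St\!\Sp(2l,\,R)$ for the symplectic Steinberg group in its usual presentation, with generators $X_{ij}(a)$ and the standard symplectic Steinberg relations, and let $G$ denote the abstract group presented by the generators $X(u,\,v,\,a)$ subject to $(\mathrm{P}1)$--$(\mathrm{P}3)$. I would build mutually inverse homomorphisms $\phi\colon\St\to G$ and $\psi\colon G\to\St$; essentially all the difficulty sits in $\psi$.

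To define $\phi$ I would send $X_{ij}(a)\mapsto X(e_i,\,e_{-j}a\eps{-j},\,0)$ for $j\neq-i$ and $X_{i,-i}(a)\mapsto X(e_i,\,0,\,a)$, as in the last display of the statement, and then check that every standard symplectic Steinberg relation is a consequence of $(\mathrm{P}1)$--$(\mathrm{P}3)$. Here $(\mathrm{P}1)$ yields the additivity of the short-root generators together with the long-root correction term, $(\mathrm{P}2)$ supplies the symmetry exchanging the two columns, and the Chevalley-type commutator formulas fall out of $(\mathrm{P}3)$ once one substitutes the relevant ESD-transformation $T(e_i,\,e_{-j}a\eps{-j},\,0)$ and reads off its action on the remaining basis vectors. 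Surjectivity of $\phi$ is then visible directly: $(\mathrm{P}1)$ and $(\mathrm{P}2)$ place every $X(e_i,\,v,\,a)$ in the image, and since $\Ep_{2l}(R)$ is generated by ESD-transformations, repeated application of $(\mathrm{P}3)$ carries these onto every $X(u,\,v,\,a)$ with $u$ a column of a symplectic elementary matrix.

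For $\psi$ I would first declare, for $u=e_i$, that $X(e_i,\,w,\,a)$ maps to the fixed ordered product of the standard generators $X_{ij}$ and $X_{i,-i}$ dictated by the basis expansion of $w$ and relation $(\mathrm{P}1)$; this is literally an element of $\St$. For general admissible $u$, choose $g\in\St$ whose image $\bar g\in\Ep_{2l}(R)$ sends $e_i$ to $u$, and set
$$\psi\big(X(u,\,v,\,a)\big)=g\,X(e_i,\,\bar g\inv v,\,a)\,g\inv.$$
The content is that this is a genuine definition: it must not depend on the auxiliary choices of $g$ and $i$. Concretely, whenever $\bar h$ transports $e_i$ to $e_j$ one has to verify the identity $h\,X(e_i,\,w,\,a)\,h\inv=X(e_j,\,\bar h w,\,a)$ inside $\St$, which I would reduce to relations in the transporter of $e_i$ to $e_j$ in $\Ep_{2l}(R)$ and settle by decomposition of unipotents. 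Once $\psi$ is well defined, checking $(\mathrm{P}1)$--$(\mathrm{P}3)$ for it is routine, and the compositions satisfy $\psi\phi=\mathrm{id}_\St$ and $\phi\psi=\mathrm{id}_G$ by evaluation on generators.

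The principal obstacle is exactly this well-definedness of $\psi$, i.e.\ the claim that the conjugate of the explicit product $X(e_i,\,w,\,a)$ depends only on $u=\bar g e_i$ and not on the chosen lift $g$ or on the index $i$; equivalently, it is the injectivity of $\phi$. This is also the step where the hypothesis $l\geq3$ is used, as one needs enough rank to write the relevant stabiliser elements as products of commuting elementary generators and to carry through the decomposition-of-unipotents argument. The remaining work --- the interderivability of the standard relations with $(\mathrm{P}1)$--$(\mathrm{P}3)$ --- is a finite though lengthy verification.
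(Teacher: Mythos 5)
Your skeleton --- two mutually inverse homomorphisms, with all the difficulty concentrated in the map $\psi$ from the new presentation back to the Steinberg group --- does match the architecture used here (the maps $\varpi$ and $\varrho$ in the relative setting, mirroring the absolute case of~\cite{L3}). The gap is in how you propose to construct $\psi$. Defining $\psi\big(X(u,\,v,\,a)\big)=g\,X(e_i,\,\bar g\inv v,\,a)\,g\inv$ for a chosen lift $g$ with $\bar ge_i=u$ requires showing that every $h\in\St\!\Sp_{2l}(R)$ whose image stabilises $e_i$ satisfies $h\,X(e_i,\,w,\,a)\,h\inv=X(e_i,\,\bar hw,\,a)$. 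But the preimage in the Steinberg group of that stabiliser contains all of $\Kt\!\Sp_{2l}(R)$, and the assertion that this kernel acts trivially on the relevant elements is not available at this stage --- in the paper it appears only as a corollary of the generation lemma (Lemma~\ref{generating}), i.e.\ \emph{after} the whole construction is complete. Moreover, the stabiliser of a unimodular vector in $\Ep_{2l}(R)$ over an arbitrary commutative ring is not generated by elementary transvections in any form that ``decomposition of unipotents'' hands you for free. So the reduction ``to relations in the transporter'' is essentially the theorem itself, not a tool for proving it; as written, the definition of $\psi$ is circular at its central step.

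The strategy actually used (van der Kallen's, as relativised in this paper) avoids choosing $g$ altogether: the candidate elements are built inside the Steinberg group by explicit closed formulas --- first for $u=e_i$ using injectivity of $\varphi$ on the unipotent radical $\ur i$ (Lemma~\ref{unirad}), then for general $u$ via the commutator expression $Y_{(i)}(u,\,v,\,a)=\llbracket X(e_i,\,u,\,0),\,Y(e_{-i},\,v\eps i,\,a)]\,Y(e_{-i},\,ua\eps{-i},\,0)$ and its long-root analogue $Z_{(i)}(u,\,0,\,a)$; this is where $l\geq3$ enters, since one needs a spare hyperbolic pair of indices. Independence of the auxiliary index is Lemmas~\ref{z-correctness} and~\ref{w-correctness} (Hall--Witt identity plus the action of the Levi subgroup $\ls i$), and only afterwards is the equivariance $\!\,^gZ(u,\,0,\,a)=Z(\phi(g)u,\,0,\,a)$ established by checking it on the Steinberg generators (Lemmas~\ref{conjugation-by-long} and~\ref{conjugation-by-short}). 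Conjugation by a transporter element then appears as a legitimate trick (e.g.\ in Lemma~\ref{x-long-is-three-short}) precisely because the object being conjugated is already well defined. Your $\phi$ direction and the final composition checks are fine; the verification of the relations for $\psi$ is also far from ``routine'' (it occupies the bulk of the argument), but the essential missing idea is the choice-free construction of the images.
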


We prove the following theorem in the case of maximal form parameter.

\begin{tm}
Let $l\geq3$, then the relative symplectic Steinberg group $\St\!\Sp_{2l}(R,\,I)$ can be defined by the set of generators
\begin{multline*}
\big\{(u,\,v,\,a,\,b)\,\big| u,\,v\in V,\ \text{$u$ is a column of}\\ \text{a symplectic elementary matrix},\ \lan u,\,v\ran=0,\ a,\,b\in I\big\}
\end{multline*}
and relations
\setcounter{equation}{0}
\renewcommand{\theequation}{T\arabic{equation}}
\begin{align}
&(u,\,vr,\,a,\,b)=(u,\,v,\,ar,\,b)\text{ for any $r\in R$},\\
&(u,\,v,\,a,\,b)(u,\,w,\,a,\,c)=(u,\,v+w,\,a,\,b+c+a^2\lan v,\,w\ran),\\
&(u,\,v,\,a,\,0)(u,\,v,\,b,\,0)=(u,\,v,\,a+b,\,0),\\
&\begin{aligned}(u,\,v,\,a,\,0)=(v,\,u,\,a,\,0)\,\text{ for $v$ a co}&\text{lumn of}\\&\text{a symplectic elementary matrix,}\end{aligned}\\
&\begin{aligned}(u',\,v',\,a',\,b')(u,\,v,\,a,\,b)(u'&,\,v',\,a',\,b')\inv=\\&=(T(u',\,v'a',\,b')u,\,T(u',\,v'a',\,b')v,\,a,\,b),\end{aligned}\\
&(u,\,u,\,a,\,0)=(u,\,0,\,0,\,2a),\\
&\begin{aligned}(u+vr,\,0,\,0,\,a)=(u,\,0,\,0,\,a)(v,\,0,\,0,\,ar^2)(v,\,u,\,ar,\,0)\text{ for $v$, $u+vr$}&\\ \text{also columns of symplectic elementary matrices}.&\end{aligned}
\end{align}
and for usual relative generators the following identities hold 
$$
Y_{ij}(a)=(e_i,\,e_{-j},\,a\eps{-j},\,0)\ \text{for $j\neq-i$},\qquad Y_{i,-i}(a)=(e_i,\,0,\,0,\,a).
$$

\end{tm}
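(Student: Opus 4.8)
Write $\widetilde{G}$ for the group defined by the generators $(u,\,v,\,a,\,b)$ and the relations T1--T7, and let $\St\!\Sp_{2l}(R,\,I)$ denote the relative symplectic Steinberg group as already defined in the paper, generated by the relative elements $Y_{ij}(a)$, $Y_{i,-i}(a)$ ($a\in I$) and carrying the usual action of the absolute group $\St\!\Sp_{2l}(R)$. The plan is to construct mutually inverse homomorphisms between $\widetilde{G}$ and $\St\!\Sp_{2l}(R,\,I)$, fixing the correspondence on generators by the two displayed identities, and to reduce all Steinberg-level computations to the absolute ``Another Presentation'' theorem. The organising principle is that T1--T5 are the relative analogues of P1--P3 --- T2 refines P1 via $\lan va,\,wa\ran=a^2\lan v,\,w\ran$, T4 refines P2, and T5 reproduces the conjugation formula P3 with the ESD-transformation $T(u',\,v'a',\,b')$ --- whereas T6 and T7 have no analogue among P1--P3 and carry the additional short-root content; this is exactly why the relative presentation requires more relations than~\cite{L3}.

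The first map $\psi\colon\St\!\Sp_{2l}(R,\,I)\to\widetilde{G}$ is determined by $Y_{ij}(a)\mapsto(e_i,\,e_{-j},\,a\eps{-j},\,0)$ and $Y_{i,-i}(a)\mapsto(e_i,\,0,\,0,\,a)$. To see it is well defined I would verify that each defining relation of $\St\!\Sp_{2l}(R,\,I)$ follows from T1--T7: relative additivity and the relative commutator formula among long-root elements from T1--T3, the symmetry relations from T4, the conjugation relations from T5, while the two short-root phenomena --- the squaring of $Y_{i,-i}$ and its expansion along a neighbouring root subgroup --- are supplied precisely by T6 and T7.

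The second map $\phi\colon\widetilde{G}\to\St\!\Sp_{2l}(R,\,I)$ is the substantive one. Given a generator $(u,\,v,\,a,\,b)$, choose $g\in\St\!\Sp_{2l}(R)$ lifting an element of $\E\Sp_{2l}(R)$ with $g e_i=u$ (possible since $u$ is a column of a symplectic elementary matrix), and set $\phi(u,\,v,\,a,\,b)={}^{g}\eta(e_i,\,g\inv v,\,a,\,b)$, where $\eta(e_i,\,w,\,a,\,b)$ is the standard element of $\St\!\Sp_{2l}(R,\,I)$ assembled from the $Y_{ij}$ and $Y_{i,-i}$ according to the coordinates of $w$, and ${}^{g}(-)$ is the absolute action. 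The crux is that this is independent of the choice of $g$: two choices differ by an element of the stabiliser of $e_i$, and one must verify that such elements act trivially on $\eta(e_i,\,g\inv v,\,a,\,b)$. Here I would import the corresponding absolute statement --- where independence of the analogous choices is exactly the content of P1--P3 for $X(u,\,v,\,a)$ --- and correct it by the genuinely new relations T6, T7. Granting well-definedness, the relations T1--T7 for the elements $\phi(u,\,v,\,a,\,b)$ reduce, after stripping off $g$, to their based versions with $u=e_i$, which follow from the absolute theorem together with T6, T7.

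Once both maps are well defined they are mutually inverse, since $\phi\circ\psi$ and $\psi\circ\phi$ fix the respective generating sets on the nose. The main obstacle I anticipate is the well-definedness of $\phi$: controlling the ambiguity in the expression $u=g e_i$ and showing that T6 and T7 are simultaneously necessary --- they are not forced by the relativisation of P1--P3 --- and sufficient to eliminate that ambiguity once T1--T5 are assumed. The sharper half is sufficiency, and most of all the identity T7, which prescribes how the pure short-root generator $(u,\,0,\,0,\,a)$ transforms when $u$ is replaced by a sum $u+vr$ of elementary columns; verifying it is where I expect to need the decomposition-of-unipotents machinery behind~\cite{L3}, and it is the technical heart of the argument.
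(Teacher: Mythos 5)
Your overall architecture---a pair of mutually inverse homomorphisms between the $T$-presented group $\widetilde G$ and the $KL$-presented relative Steinberg group---is the same as the paper's (its $\varrho$ and $\varpi$), and your $\psi$ corresponds to Lemma~\ref{st-relations} (modulo a point you pass over: since KL2--KL7 are formulated in terms of the action of $\St\!\Sp_{2l}(R)$, one must first construct that action on $\widetilde G$, which the paper does via the automorphisms $\alpha_{u,v,a}$ and the absolute theorem). The genuine gap is in your construction of $\phi$, which is the substantive half of the whole theorem. You propose to set $\phi\big((u,\,v,\,a,\,b)\big)=\!\,^{g}\eta(e_i,\,\phi(g)\inv v,\,a,\,b)$ for a chosen $g$ with $\phi(g)e_i=u$ and to prove independence of $g$ by showing that the stabiliser of $e_i$ acts trivially on the base-point element. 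That requires an explicit generating set for (a lift to $\St\!\Sp_{2l}(R)$ of) the stabiliser of the vector $e_i$ in $\Ep_{2l}(R)$---a large group containing, among much else, all $X(u',\,v',\,a')$ with $\lan u',\,e_i\ran=\lan v',\,e_i\ran=0$---and neither the absolute ``Another Presentation'' theorem nor the relations T6, T7 supply such a description. This stabiliser problem is exactly what the van der Kallen--style construction is designed to circumvent, so ``import the absolute statement and correct it by T6, T7'' is not a workable plan for well-definedness.

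The paper's route never chooses such a $g$ at the definition stage. It defines $Y(e_i,\,v,\,a)$ intrinsically as the unique preimage of $T(e_i,\,v,\,a)$ in the unipotent radical $\ur i$ (Lemma~\ref{unirad}), then sets $Y_{(i)}(u,\,v,\,a)=\llbracket X(e_i,\,u,\,0),\,Y(e_{-i},\,v\eps i,\,a)]\,Y(e_{-i},\,ua\eps{-i},\,0)$ and proves by Hall--Witt manipulations that the outcome is independent of the \emph{auxiliary index} $i$, not of a group element (Lemmas~\ref{z-correctness}, \ref{w-correctness}, \ref{correctness}); only after $Z(u,\,0,\,a)$ and $Z(u,\,v,\,a,\,b)$ are well defined does it establish the equivariance $\!\,^gZ(u,\,0,\,a)=Z(\phi(g)u,\,0,\,a)$ by checking it on the elementary generators (Lemmas~\ref{conjugation-by-long}, \ref{conjugation-by-short}), and only then does it invoke ``choose $g$ with $\phi(g)v=e_i$''---to verify relations among already-defined elements (Lemmas~\ref{x-long-is-three-short}, \ref{short-additivity}), where the ambiguity in $g$ is harmless. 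A secondary overstatement: the composites do not fix the generating sets ``on the nose'' in both directions; $\varrho\varpi\big((u,\,v,\,a,\,b)\big)=(u,\,v,\,a,\,b)$ needs the decomposition of $(e_i,\,v,\,a,\,b)$ into the elements $\!\,_{jk}(\,\cdot\,)$ (Lemma~\ref{vdk-unipotent-decomposition}) together with equivariance and transitivity on columns, since $\varrho$ only hits the images of the $Y_{jk}$.
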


The author plans to generalise the above result for the case of arbitrary form parameter.

\section{Relative symplectic Steinberg group}

In the sequel $R$ denotes an arbitrary associative commutative unital ring, $V=R^{2l}$ denotes a free right $R$-module with basis numbered $e_{-l}$, $\ldots$, $e_{-1}$, $e_1$, $\ldots$, $e_l$, $l\geq3$. For the vector $v\in V$ its $i$-th coordinate will be denoted by $v_i$, i.e. $v=\sum_{i=-l}^le_iv_i$. By $\lan\ ,\ \ran$ we denote the standard symplectic form on $V$, i.e $\lan e_i,\,e_j\ran=\sgn(i)\delta_{i,-j}$. We will usually write $\eps i$ instead of $\sgn(i)$. Observe that $\lan u,\,u\ran=0$ for any $u\in V$.

\begin{df}
Define the {\it symplectic group} $\Sp(V)=\Sp_{2l}(R)$ as the group of automorphisms of $V$ preserving the symplectic form $\langle\ ,\ \rangle$,
$$\Sp(V)=\{f\in\GL(V)\mid\lan f(u),\,f(v)\ran=\lan u,\,v\ran\ \ \forall\,u,\,v\in V\}.$$
\end{df}

\begin{df}[Eichler--Siegel--Dickson transformations]
For $a\in R$ and $u$, $v\in V$, $\lan u,\,v\ran=0$, denote by $T(u,\,v,\,a)$ the automorphism of $V$ s.t. for $w\in V$ one has
$$T(u,\,v,\,a)\colon w\mapsto w+u(\lan v,\,w\ran+a\lan u,\,w\ran)+v\lan u,\,w\ran.$$
We refer to the elements $T(u,\,v,\,a)$ as the ({\it symplectic}) {\it ESD-trans\-for\-ma\-tions}.
\end{df}

\begin{lm}
\label{esd-properties}
Let $u$, $v$, $w\in V$ be three vectors such that $\lan u,\,v\ran=0$, $\lan u,\,w\ran=0$, and let $a$, $b\in R$. Then
\begin{enumerate}
\item $T(u,\,v,\,a)\in\Sp(V)$,
\item $T(u,\,v,\,a)\,T(u,\,w,\,b)=T(u,\,v+w,\,a+b+\lan v,\,w\ran)$,
\item $T(u,\,va,\,0)=T(v,\,ua,\,0)$,
\item $g\,T(u,\,v,\,a)g\inv=T(gu,\,gv,\,a)\ \ \forall\,g\in\Sp(V)$.
\end{enumerate}
\end{lm}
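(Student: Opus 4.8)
The plan is to verify all four statements by direct computation, evaluating each claimed identity of automorphisms on an arbitrary vector $z\in V$ and simplifying using only the $R$-bilinearity and antisymmetry of $\lan\ ,\ \ran$ together with the vanishing relations $\lan u,\,u\ran=0$, $\lan u,\,v\ran=0$, $\lan u,\,w\ran=0$. Before anything else I would check that every transformation appearing in the statement is well defined: $R$-linearity is immediate from the defining formula, and the orthogonality hypotheses needed to form each $T$ all hold, namely $\lan u,\,v+w\ran=0$ for (b), $\lan u,\,va\ran=\lan v,\,ua\ran=0$ for (c) (both being $\pm a\lan u,\,v\ran=0$), and $\lan gu,\,gv\ran=\lan u,\,v\ran=0$ for (d).

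I would prove (b) first, since (a) is cleanest once (b) is available. Applying the left factor to $z':=T(u,\,w,\,b)z$ requires only the two auxiliary identities $\lan u,\,z'\ran=\lan u,\,z\ran$ (using $\lan u,\,u\ran=\lan u,\,w\ran=0$) and $\lan v,\,z'\ran=\lan v,\,z\ran+\lan v,\,w\ran\lan u,\,z\ran$ (using $\lan v,\,u\ran=0$). Substituting these, the coefficient of $u$ collects to $\lan v+w,\,z\ran+(a+b+\lan v,\,w\ran)\lan u,\,z\ran$ and the vectors $u$, $v$, $w$ regroup as $u$ together with $(v+w)\lan u,\,z\ran$, which is exactly $T(u,\,v+w,\,a+b+\lan v,\,w\ran)z$.

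For (a), linearity is clear, so only preservation of the form must be shown. Writing $Tz=z+u\alpha+v\beta$ with $\alpha=\lan v,\,z\ran+a\lan u,\,z\ran$, $\beta=\lan u,\,z\ran$, and similarly $Tz'=z'+u\alpha'+v\beta'$, I would expand $\lan Tz,\,Tz'\ran$: every term containing $\lan u,\,u\ran$, $\lan v,\,v\ran$, or $\lan u,\,v\ran$ drops out, and the four surviving cross terms cancel in pairs by antisymmetry, leaving $\lan z,\,z'\ran$. Invertibility then follows from (b): since $T(u,\,v,\,a)\,T(u,\,-v,\,-a)=T(u,\,0,\,-\lan v,\,v\ran)=T(u,\,0,\,0)=\mathrm{id}$, the map $T(u,\,v,\,a)$ lies in $\Sp(V)$. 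Part (c) is a one-line comparison of coefficients: both $T(u,\,va,\,0)z$ and $T(v,\,ua,\,0)z$ equal $z+u\,a\lan v,\,z\ran+v\,a\lan u,\,z\ran$ after pulling the scalar $a$ out of the first argument of the form. For (d), I would set $z=g\inv w$, use linearity of $g$ to move it past the ring-valued coefficients, and rewrite $\lan v,\,g\inv w\ran=\lan gv,\,w\ran$ and $\lan u,\,g\inv w\ran=\lan gu,\,w\ran$ via the symplectic condition on $g$, obtaining $T(gu,\,gv,\,a)w$.

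The only place demanding genuine care is (a): it carries the most terms, and throughout one must respect the right-module convention when moving the element $a$ and the (commutative) form values to the appropriate side. The cancellations themselves are forced, so I expect no real obstacle beyond bookkeeping.
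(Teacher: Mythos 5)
Your verification is correct, and the paper in fact states Lemma~\ref{esd-properties} without any proof at all, treating it as a routine computation. Your direct expansion on an arbitrary vector, with the well-definedness checks and the derivation of invertibility from part (b), is exactly the standard argument the paper leaves to the reader.
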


\begin{rk}
Observe that $T(u,\,0,\,0)=1$ and $T(u,\,v,\,a)\inv=T(u,\,-v,\,-a)$.
\end{rk}

In the present paper all commutators are left-normed, $[x,\,y]=xyx\inv y\inv$, we denote $xyx\inv$ by $\!\,^xy$.

\begin{lm}
\label{z-decomposition}
For $u$, $v\in V$ such that $u_i=u_{-i}=v_i=v_{-i}=0$, $\lan u,\,v\ran=0$, and $a\in R$ one has 
$$
[T(e_i,\,u,\,0),\,T(e_{-i},\,v,\,a)]=T(u,\,v\eps{i},\,a)T(e_{-i},\,-ua\eps{-i},\,0).
$$
\end{lm}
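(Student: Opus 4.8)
The plan is to verify the identity by direct computation. Both sides lie in $\Sp(V)$ (each factor is an ESD-transformation, which lies in $\Sp(V)$ by Lemma~\ref{esd-properties}(a), and $\Sp(V)$ is a group), and an element of $\Sp(V)$ is completely determined by its action on $V$; so it suffices to check that the two sides send an arbitrary $w\in V$ to the same vector. Before starting I would record the vanishing pairings forced by the hypotheses, since these are exactly what kills the unwanted cross terms. Because $u_i=u_{-i}=v_i=v_{-i}=0$, each of $\lan e_i,\,u\ran$, $\lan e_{-i},\,u\ran$, $\lan e_i,\,v\ran$, $\lan e_{-i},\,v\ran$ equals $\pm$ one of these vanishing coordinates and is therefore $0$; together with the given $\lan u,\,v\ran=0$ and the identity $\lan x,\,x\ran=0$, this also shows that the transformations $T(u,\,v\eps{i},\,a)$ and $T(e_{-i},\,-ua\eps{-i},\,0)$ on the right are well defined.

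For the left-hand side I would first apply the conjugation rule of Lemma~\ref{esd-properties}(d), together with the Remark for the inverse, to write, with $g=T(e_i,\,u,\,0)$,
$$[T(e_i,\,u,\,0),\,T(e_{-i},\,v,\,a)]=g\,T(e_{-i},\,v,\,a)\,g\inv\cdot T(e_{-i},\,v,\,a)\inv=T(ge_{-i},\,gv,\,a)\,T(e_{-i},\,-v,\,-a).$$
A one-line evaluation of the defining formula then gives $ge_{-i}=e_{-i}+u\eps{i}$ and $gv=v$, where the vanishing of $\lan u,\,e_{-i}\ran$, $\lan e_i,\,v\ran$ and $\lan u,\,v\ran$ is what makes these clean. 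This already reduces the commutator to a product of two ESD-transformations, matching the shape of the right-hand side.

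I would then introduce the scalars $\alpha=\lan e_{-i},\,w\ran$, $\beta=\lan v,\,w\ran$, $\gamma=\lan u,\,w\ran$ and compute the image of a generic $w$ under each side by applying the defining formula twice, using $\eps{-i}=-\eps{i}$, $\eps{i}^{2}=1$, and the vanishing pairings recorded above to discard every unwanted term. My expectation, confirmed by a trial run of the bookkeeping, is that both the left-hand side and the right-hand side collapse to the same vector
$$w+u\eps{i}\beta+ua\eps{i}\alpha+ua\gamma+v\eps{i}\gamma+e_{-i}a\eps{i}\gamma,$$
which establishes the lemma.

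The only genuine obstacle here is bookkeeping rather than anything conceptual: keeping straight the sign conventions $\eps{i}$ versus $\eps{-i}=-\eps{i}$, in particular the identification $-ua\eps{-i}=ua\eps{i}$ in the second factor on the right, and correctly tracking which of the numerous bilinear cross terms vanish when the two nested ESD-transformations are expanded. Organising the computation around the three scalars $\alpha,\beta,\gamma$ keeps this manageable.
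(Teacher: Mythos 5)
Your proof is correct: I checked the computation, and both sides do send a generic $w$ to $w+u\eps{i}\lan v,\,w\ran+ua\eps{i}\lan e_{-i},\,w\ran+ua\lan u,\,w\ran+v\eps{i}\lan u,\,w\ran+e_{-i}a\eps{i}\lan u,\,w\ran$, with the reduction of the left-hand side to $T(e_{-i}+u\eps{i},\,v,\,a)\,T(e_{-i},\,-v,\,-a)$ via Lemma~\ref{esd-properties}(d) being the right first move. The paper states this lemma without any proof (presumably deferring to the computation in the cited ``Another presentation'' paper), so there is no argument to compare against; your direct verification, including the preliminary observation that the hypotheses force $\lan e_{\pm i},\,u\ran=\lan e_{\pm i},\,v\ran=0$ and hence that all factors are well-defined ESD-transformations, is exactly the kind of bookkeeping the author leaves to the reader.
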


\begin{df}
{\it A form ideal} $(I,\,\Gamma)$ is a pair of an ideal $I\trianglelefteq R$ and a {\it relative form parameter} $\Gamma$ {\it of level} $I$, i.e., an additive subgroup of $I$ such that 
\begin{enumerate}
\item $\forall\, a\in I\,$ holds $2a\in\Gamma$,
\item $\forall\, r\in R,\ \forall\, a\in I\,$ holds $ra^2\in\Gamma$,
\item $\forall\,\alpha\in\Gamma,\ \forall\,r\in R\,$ holds $\alpha r^2\in\Gamma$.
\end{enumerate}
\end{df}

\begin{rk}
If $2\in R^\times$ the only possible choice for a relative form parameter is $\Gamma=I$.
\end{rk}

\begin{df}
Define $T_{ij}(a)=T(e_{i},\,e_{-j}a\eps{-j},\,0)$ and $T_{i,-i}(a)=T(e_i,\,0,\,a)$, where $a\in R$, $i$, $j\in\{-l,$ $\ldots,$ $-1,$ $1,$ $\ldots,$ $l\}$, $i\not\in\{\pm j\}$. We refer to these elements as the {\it elementary symplectic transvections}. A normal subgroup of $\Sp(V)$
$$\Ep_{2l}(R,\,I,\,\Gamma)=\!\,^{\Sp(V)}\lan T_{ij}(a),\ T_{i,-i}(\alpha)\mid i\not\in\{\pm j\},\ a\in I,\ \alpha\in\Gamma\ran$$
is called {\it the relative elementary symplectic group} corresponding to the form ideal $(I,\,\Gamma)$.
\end{df}

\begin{df}
The {\it symplectic Steinberg group} $\St\!\Sp_{2l}(R)$ is the group generated by the formal symbols $X_{ij}(r)$, $i\neq j$, $r\in R$ subject to the Steinberg relations
\setcounter{equation}{-1}
\renewcommand{\theequation}{S\arabic{equation}}
\begin{align}
&X_{ij}(r)=X_{-j,-i}(-r\eps i\eps j),\\
&X_{ij}(r)X_{ij}(s)=X_{ij}(r+s),\\
&[X_{ij}(r),\,X_{hk}(s)]=1,\text{ for }h\not\in\{j,-i\},\ k\not\in\{i,-j\},\\
&[X_{ij}(r),\,X_{jk}(s)]=X_{ik}(rs),\text{ for }i\not\in\{-j,-k\},\ j\neq-k,\\
&[X_{i,-i}(r),\,X_{-i,j}(s)]=X_{ij}(rs\eps i)X_{-j,j}(-rs^2),\\
&[X_{ij}(r),\,X_{j,-i}(s)]=X_{i,-i}(2rs\eps i).
\end{align}
\end{df}

The next lemma is a straightforward consequence of Lemmas~\ref{esd-properties} and \ref{z-decomposition}.

\begin{lm}
There is a natural epimorphism 
$$\phi\colon\St\!\Sp_{2l}(R)\epi\Ep_{2l}(R)=\Ep_{2l}(R,\,R,\,R)$$ 
sending the generators $X_{ij}(a)$ to the corresponding elementary transvections $T_{ij}(a)$. In other words, the Steinberg relations hold for the elementary transvections.
\end{lm}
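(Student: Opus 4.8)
The plan is to define $\phi$ on generators by $\phi\big(X_{ij}(r)\big)=T_{ij}(r)$, where for $j=-i$ this reads $\phi\big(X_{i,-i}(r)\big)=T_{i,-i}(r)=T(e_i,\,0,\,r)$, and then to check that the elementary transvections $T_{ij}(r)$ satisfy all the Steinberg relations (S0)--(S5). By part~(a) of Lemma~\ref{esd-properties} each $T_{ij}(r)$ lies in $\Sp(V)$, so once the relations are verified the universal property of a group given by generators and relations produces a homomorphism $\St\!\Sp_{2l}(R)\to\Sp(V)$ with the stated values. Its image is the subgroup generated by all the $T_{ij}(r)$ and $T_{i,-i}(r)$; since in the absolute case this subgroup already equals the normal closure $\Ep_{2l}(R,\,R,\,R)=\Ep_{2l}(R)$, the map $\phi$ is onto, and everything reduces to the six families of relations.

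The relations (S0) and (S1) are immediate. Unwinding the definitions and using $\eps{-j}=-\eps j$ and $\eps i^2=1$, part~(c) gives $T_{ij}(r)=T(e_i,\,e_{-j}r\eps{-j},\,0)=T(e_{-j},\,e_ir\eps{-j},\,0)=T_{-j,-i}(-r\eps i\eps j)$, which is (S0); relation (S1) follows from part~(b) applied with the common first argument $e_i$, the correction term vanishing because $\lan e_{-j},\,e_{-j}\ran=0$. For (S2) I would use the conjugation formula, part~(d): a short computation shows that $T_{ij}(r)$ changes only the coordinates indexed by $i$ and $-j$ and depends only on those indexed by $j$ and $-i$, so the hypotheses $h\notin\{j,\,-i\}$ and $k\notin\{i,\,-j\}$ force $T_{ij}(r)$ to fix both $e_h$ and $e_{-k}$; then $\!\,^{T_{ij}(r)}T_{hk}(s)=T_{hk}(s)$ by part~(d) and the commutator is trivial.

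The substance is in (S3), (S4), and (S5), all of which I would deduce from Lemma~\ref{z-decomposition}, which computes precisely the commutators $[T(e_i,\,u,\,0),\,T(e_{-i},\,v,\,a)]$ that arise. The common maneuver is to use part~(c) to rewrite the two factors so that their first arguments form an opposite pair $e_m$, $e_{-m}$, after which Lemma~\ref{z-decomposition} applies. For (S3) one takes $m=-j$, applies the lemma with $a=0$, and evaluates the resulting $T(\cdot,\,\cdot,\,0)$ to obtain $T_{ik}(rs)$. For (S5) one again takes $m=-j$; here the lemma produces a transformation of the form $T(e_i\alpha,\,e_i\beta,\,0)$, and the identity $T(e_i\alpha,\,e_i\beta,\,0)=T(e_i,\,0,\,2\alpha\beta)=T_{i,-i}(2\alpha\beta)$ is exactly the source of the factor $2$ in $X_{i,-i}(2rs\eps i)$. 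For (S4) the first arguments $e_i$, $e_{-i}$ are already opposite, but the third argument sits on the wrong factor, so I would first pass to $[T_{-i,j}(s),\,T_{i,-i}(r)]$ via $[x,\,y]\inv=[y,\,x]$, apply the lemma, and simplify using the inverse formula $T(u,\,v,\,a)\inv=T(u,\,-v,\,-a)$ from the Remark; the long-root factor then evaluates to $T_{-j,j}(-rs^2)$, the square $s^2$ accounting for the term $X_{-j,j}(-rs^2)$. The main obstacle is not conceptual but the coefficient bookkeeping: tracking the signs $\eps{\pm i}$, the quadratic coefficients, and checking the isotropy hypotheses $\lan u,\,v\ran=0$ needed to invoke Lemma~\ref{z-decomposition}, so that the two sides agree on the nose. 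No idea beyond Lemmas~\ref{esd-properties} and~\ref{z-decomposition} is needed.
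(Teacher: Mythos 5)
Your proposal is correct and follows exactly the route the paper intends: the paper gives no written proof, stating only that the lemma is a straightforward consequence of Lemmas~\ref{esd-properties} and~\ref{z-decomposition}, and your verification of (S0)--(S5) uses precisely those two lemmas in the expected way. The sign and coefficient computations you sketch (including the reduction of (S4) to the lemma via inverting the commutator, and the identity $T(e_i\alpha,\,e_i\beta,\,0)=T(e_i,\,0,\,2\alpha\beta)$ for (S5)) all check out.
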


If group $G$ acts on group $H$ from the left, we will denote the image of $h\in H$ under the homomorphism corresponding to the element $g\in G$ by $\!\,^gh$, the element $\!\,^gh\cdot h\inv$ by $\llbracket g,\,h]$ and the element $h\cdot\!\,^gh\inv$ by $[h,\,g\rrbracket$.

\begin{df}
Define the {\it relative symplectic Steinberg group} $\St\!\Sp_{2l}(R,\,I,\,\Gamma)$ corresponding to the form ideal $(I,\,\Gamma)$ as a formal group with the action of the absolute Steinberg group $\St\!\Sp_{2l}(R)$ defined by the set of (relative) generators $\{Y_{ij}(a)\mid i\not\in\{\pm j\},\ a\in I\}\cup\{Y_{i,-i}(\alpha)\mid \alpha\in\Gamma\}$ subject to the following relations
\setcounter{equation}{-1}
\renewcommand{\theequation}{KL\arabic{equation}}
\begin{align}
&Y_{ij}(a)=Y_{-j,-i}(-a\eps i\eps j),\\
&Y_{ij}(a)Y_{ij}(b)=Y_{ij}(a+b),\\
&\llbracket X_{ij}(r),\,Y_{hk}(a)]=1,\text{ for }h\not\in\{j,-i\},\ k\not\in\{i,-j\},\\
&\llbracket X_{ij}(r),\,Y_{jk}(a)]=Y_{ik}(ra),\text{ for }i\not\in\{-j,-k\},\ j\neq-k,\\
&\llbracket X_{i,-i}(r),\,Y_{-i,j}(a)]=Y_{ij}(ra\eps i)Y_{-j,j}(-ra^2),\\
&[Y_{i,-i}(\alpha),\,X_{-i,j}(r)\rrbracket=Y_{ij}(\alpha r\eps i)Y_{-j,j}(-\alpha r^2),\\
&\llbracket X_{ij}(r),\,Y_{j,-i}(a)]=X_{i,-i}(2ra\eps i),\\
&\!\,^{X_{ij}(a)}Y_{hk}(b)=\!\,^{Y_{ij}(a)}Y_{hk}(b).
\end{align}
In other words we consider a free group generated by symbols $(g,\,x)=\!\,^gx$ where $g$ is from the absolute Steinberg group and $x$ is from the set of relative generators, $\St\!\Sp_{2l}(R)$ naturally acts on this free group via $\!\,^f(g,\,x)=(fg,\,x)$ and then we define a relative symplectic Steinberg group as a factor of the described free group modulo normal subgroup generated by KL0--KL7.
\end{df}

\begin{df}
Obviously, there is a natural map $$\varphi:\St\!\Sp_{2l}(R,\,I,\,\Gamma)\rightarrow\Sp_{2l}(R).$$ Then its kernel is denoted by $\Kt\!\Sp_{2l}(R,\,I,\,\Gamma)$.
\end{df}

\section{Auxiliary constructions}

\begin{df}
Define the {\it relative Steinberg unipotent radical} $$\ur i=\lan Y_{ij}(a),\ Y_{i,-i}(\alpha)\mid i\not\in\{\pm j\},\ a\in I,\ \alpha\in\Gamma\ran\leq\St\!\Sp_{2l}(R,\,I,\,\Gamma)$$ and the ({\it absolute}) {\it Steinberg parabolic subgroup} $$\ps i=\lan X_{kh}(a)\mid i\not\in\{h,-k\},\ a\in R\ran\leq\St\!\Sp_{2l}(R).$$
\end{df}

\begin{lm}[Levi decomposition]
For $g\in\ps i$, $u\in\ur i$ one has 
$$\!\,^gu\in\ur i.$$
\end{lm}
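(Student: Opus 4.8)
The plan is to reduce the statement to the generators of the two subgroups and then read off each conjugate directly from the defining relations KL0--KL6. Every $g\in\St\!\Sp_{2l}(R)$ acts on $\St\!\Sp_{2l}(R,\,I,\,\Gamma)$ by a group automorphism, so once I know that $\!\,^{X_{kh}(r)}$ carries each generator of $\ur i$ into $\ur i$ for every generator $X_{kh}(r)$ of $\ps i$ (that is, for every $X_{kh}(r)$ with $i\notin\{h,\,-k\}$), it follows that $\!\,^{X_{kh}(r)}\ur i\subseteq\ur i$, since $\!\,^{X_{kh}(r)}$ is a homomorphism and $\ur i$ is generated by those generators. As $\ps i$ is generated by the $X_{kh}(r)$ together with their inverses $X_{kh}(-r)$, which are of the same shape, the inclusion $\!\,^gu\in\ur i$ for arbitrary $g\in\ps i$ then follows by induction on the word length of $g$. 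Writing $\!\,^gu=\llbracket g,\,u]\cdot u$ with $u\in\ur i$, everything comes down to proving $\llbracket X_{kh}(r),\,u]\in\ur i$ for $u$ a generator of $\ur i$.

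Next I would record the precise shape of the two subgroups. Relation KL0 reads $Y_{pq}(a)=Y_{-q,-p}(-a\eps p\eps q)$, so the generators of $\ur i$ are exactly the $Y_{pq}$ with $i\in\{p,\,-q\}$, i.e. those with first index $i$ or, after KL0, second index $-i$; dually, $X_{kh}(r)\in\ps i$ means $i\notin\{h,\,-k\}$, a condition left invariant by S0, which gives $X_{kh}(r)=X_{-h,-k}(-r\eps k\eps h)$. With this in hand the generic case is immediate from KL2: if $j\neq k$ and $h\neq-j$ then its hypotheses $i\notin\{h,\,-k\}$ and $j\notin\{k,\,-h\}$ both hold and $\llbracket X_{kh}(r),\,Y_{ij}(a)]=1\in\ur i$; likewise, for the long root generator one checks that under $i\notin\{h,\,-k\}$ the hypotheses of KL2 are always met, so $\llbracket X_{kh}(r),\,Y_{i,-i}(\alpha)]=1$ as well.

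The remaining, ``composable'' configurations for $u=Y_{ij}(a)$ are $j=k$ and $h=-j$, and the strategy is to normalise indices by KL0 and S0 until exactly one of KL3, KL4, KL6 applies verbatim. When $j=k$ with $h\neq-k,\,-i$ (respectively $h=-j$ with $k\neq j,\,i$) one rewrites $u$ and $g$ and applies KL3 to obtain a single generator $Y_{-h,-i}$ (respectively $Y_{k,-i}$) of second index $-i$, hence in $\ur i$. The degenerate ``long root'' configurations are $g=X_{k,-k}(r)$ (when $h=-k$), treated by KL4 and yielding a product $Y_{k,-i}(\,\cdot\,)Y_{i,-i}(\,\cdot\,)$, and $g=X_{k,-i}(r)$ or $g=X_{i,-j}(r)$ (when $h=-i$ or $k=i$), treated by KL6 and yielding $Y_{i,-i}(\,\cdot\,)$; in each case every factor again has first index $i$ or second index $-i$ and so lies in $\ur i$.

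The only genuinely delicate points are the index bookkeeping and the non-simply-laced (long root) cases. One must track the index conditions carefully so that precisely one relation applies after the KL0/S0 normalisation, and one must check that the scalars produced in the long root outputs are admissible: the argument of the $Y_{i,-i}$ coming from KL4 has the form $-rc^2$ with $c\in I$, hence lies in $\Gamma$ by axiom (b) of a form parameter, while the argument of the one coming from KL6 lies in $2I$, hence in $\Gamma$ by axiom (a). Granting these verifications, each $\llbracket X_{kh}(r),\,u]$ is a product of generators of $\ur i$, which closes the induction and proves the Levi decomposition.
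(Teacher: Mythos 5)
The paper states this lemma without proof, treating it as a routine consequence of the defining relations, and your argument is exactly that routine verification, carried out correctly: reduce to generators of $\ps i$ and $\ur i$, write $\!\,^gu=\llbracket g,\,u]\cdot u$, and normalise indices via KL0/S0 until exactly one of KL2, KL3, KL4, KL6 applies. Your case split is exhaustive (generic case by KL2, $j=k$ or $h=-j$ by KL3, the long-root element $X_{k,-k}$ by KL4, and $h=-i$ or $k=i$ by KL6), and you correctly note the only non-obvious point, namely that the coefficients of the resulting $Y_{i,-i}$ factors lie in $\Gamma$ by axioms (a) and (b) of a relative form parameter.
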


\begin{lm}
\label{heis}
One has
$$[\ur i,\,\ur i]\leq\lan Y_{i,-i}(\alpha)\ran,\qquad [\ur i,\,\lan Y_{i,-i}(\alpha)\ran]=1.$$
\end{lm}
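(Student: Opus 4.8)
The plan is to show that $\ur i$ is a two-step nilpotent group of Heisenberg type: the long-root generators $Y_{i,-i}(\alpha)$ generate a central subgroup, while the short-root generators $Y_{ij}(a)$ commute with one another modulo that centre. Both assertions then follow from the standard observation that if a group $G=\lan S\ran$ has a subset $T\subseteq S$ generating $Z=\lan T\ran$ such that every $t\in T$ is central in $G$ and every commutator of two elements of $S$ lies in $Z$, then $Z$ is central (so $[G,Z]=1$) and $G/Z$ is abelian (so $[G,G]\leq Z$). Thus everything reduces to evaluating the commutators of the generating symbols. The key device is relation KL7: it identifies internal conjugation by a relative generator $Y_{ij}(a)$ with the external action of the absolute generator $X_{ij}(a)$, so each such commutator can be rewritten as one of the action expressions $\llbracket X_{ij}(a),\,Y_{hk}(b)]$ and then evaluated by KL2--KL6.

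First I would treat the centrality statement. For a short and a long root I write $[Y_{ij}(a),\,Y_{i,-i}(\alpha)]=\llbracket Y_{ij}(a),\,Y_{i,-i}(\alpha)]$ and pass through KL7 to $\llbracket X_{ij}(a),\,Y_{i,-i}(\alpha)]$. The indices satisfy the hypotheses of KL2, since $i\notin\{j,-i\}$ and $-i\notin\{i,-j\}$ (using $j\neq\pm i$); hence this action is trivial and the commutator vanishes. As the long-root generators commute among themselves by KL1, every generator of $\ur i$ centralises each $Y_{i,-i}(\alpha)$, so $\lan Y_{i,-i}(\alpha)\ran$ is central in $\ur i$, which is exactly $[\ur i,\,\lan Y_{i,-i}(\alpha)\ran]=1$.

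For the first assertion I compute $[Y_{ij}(a),\,Y_{ik}(b)]$ for short roots $j,k\neq\pm i$ in the same way, rewriting it via KL7 as $\llbracket X_{ij}(a),\,Y_{ik}(b)]$. If $k\neq -j$ (the case $k=j$ being immediate from KL1) the indices again satisfy KL2 and the commutator is trivial. The only genuinely nontrivial case is $k=-j$: here I first apply the reflection relation KL0 to rewrite $Y_{i,-j}(b)=Y_{j,-i}(b\eps i\eps j)$, and then KL6 produces the long-root element $Y_{i,-i}(2ab\eps j)$. This lies in $\lan Y_{i,-i}(\alpha)\ran$ because $ab\in I$ and hence $2ab\eps j\in\Gamma$ by the form-parameter axioms. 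Together with the trivial short--long and long--long commutators, all commutators of generators lie in the central subgroup $\lan Y_{i,-i}(\alpha)\ran$, so $\ur i/\lan Y_{i,-i}(\alpha)\ran$ is abelian and $[\ur i,\,\ur i]\leq\lan Y_{i,-i}(\alpha)\ran$.

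The main obstacle is the case $k=-j$: it is the only place where a nontrivial central element is produced, and it requires careful bookkeeping of the signs $\eps{\cdot}$ together with the verification that $2ab\eps j$ genuinely lies in $\Gamma$ (this is where form-parameter axiom (a) is used, and it explains why the group is nilpotent rather than abelian). I would also confirm that KL7 legitimately applies when the conjugated generator is a long root, and that the index hypotheses of KL2 and KL6 are met in each sub-case; in particular the right-hand side of KL6 must be read as the relative generator $Y_{i,-i}(2ra\eps i)$ for the computation to stay inside $\ur i$.
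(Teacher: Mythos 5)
Your proof is correct, and it is the natural argument the paper intends (the lemma is stated there without proof): reduce everything to commutators of generators, convert internal conjugation to the $\St\!\Sp_{2l}(R)$-action via KL7, and observe that KL2 kills all cases except $[Y_{ij}(a),\,Y_{i,-j}(b)]$, which KL0 and KL6 evaluate to $Y_{i,-i}(2ab\eps j)$, a legitimate long-root generator since $2ab\eps j\in\Gamma$ by the form-parameter axioms. Your two side remarks — that the right-hand side of KL6 must be read as $Y_{i,-i}(2ra\eps i)$ rather than $X_{i,-i}(2ra\eps i)$, and that KL7 is needed also when the conjugated generator is a long root — are both apt.
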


\begin{cl*}
Every element of $\ur i$ can be expressed in the form $$Y_{i,-i}(\alpha)Y_{i,-l}(a_{-l})\ldots Y_{i,-1}(a_{-1})Y_{i,1}(a_1)\ldots Y_{i,l}(a_l).$$
\end{cl*}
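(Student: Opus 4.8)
The statement is a normal-form assertion, and the plan is to prove it by the standard collection process, with Lemma~\ref{heis} supplying control over all commutators. Set $Z=\lan Y_{i,-i}(\alpha)\mid\alpha\in\Gamma\ran$. Applying KL1 with second index $-i$ gives $Y_{i,-i}(\alpha)Y_{i,-i}(\beta)=Y_{i,-i}(\alpha+\beta)$, so $Z=\{Y_{i,-i}(\alpha)\mid\alpha\in\Gamma\}$ already consists of single symbols, and by the second part of Lemma~\ref{heis} it is central in $\ur i$. The same relation KL1 shows $Y_{ij}(a)\inv=Y_{ij}(-a)$, so every $g\in\ur i$ can be written as a word in the symbols $Y_{ij}(a)$ (with $j\neq\pm i$) and $Y_{i,-i}(\alpha)$, with no inverses.

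First I would move every factor $Y_{i,-i}(\alpha)$ to the far left: since $Z$ is central this costs nothing, and additivity merges them into a single leading factor $Y_{i,-i}(\alpha)$, leaving $g=Y_{i,-i}(\alpha)\,w$ with $w$ a word in the $Y_{ij}(a)$, $j\neq\pm i$. Next I would sort $w$. For $j\neq k$ the first part of Lemma~\ref{heis} yields $[Y_{ij}(a),\,Y_{ik}(b)]\in Z$, so interchanging two adjacent factors via $Y_{ij}(a)Y_{ik}(b)=z\,Y_{ik}(b)Y_{ij}(a)$ produces only a central element $z\in Z$, which I transport to the left and absorb into the leading factor. A finite number of such transpositions (a bubble sort) brings the factors of $w$ into the fixed order $Y_{i,-l},\dots,Y_{i,-1},Y_{i,1},\dots,Y_{i,l}$ (skipping the undefined $j=i$ and the already-extracted $j=-i$), and collecting the now-adjacent factors sharing a second index by KL1 into a single $Y_{ij}(a_j)$ produces exactly the claimed expression.

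The proof is really just bookkeeping, and the entire mathematical content sits in Lemma~\ref{heis}: once one knows that $Z$ is central and that every commutator of two generators lands in $Z$, existence of the normal form is forced. The only point needing (routine) care is to verify that the sorting terminates and that each correction is genuinely central, so that it may be freely collected at the front, both of which are immediate from Lemma~\ref{heis}. Note that no uniqueness of the expression is claimed, so this existence argument suffices.
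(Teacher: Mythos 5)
Your argument is correct and is exactly the collection/bubble-sort argument the paper intends: the corollary is stated without proof as an immediate consequence of Lemma~\ref{heis}, and your write-up (centrality of $\lan Y_{i,-i}(\alpha)\ran$, commutators of generators landing there, transposition with central corrections absorbed on the left, then merging by KL1) is the standard way to make that precise. The one notational point you handle correctly is that the indices $j=\pm i$ are simply omitted from the product, matching the paper's convention in the proof of Lemma~\ref{unirad}.
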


\begin{lm}
\label{unirad}
The restriction of the natural projection $\varphi\colon\St\!\Sp_{2l}(R,\,I,\,\Gamma)\epi\Sp_{2l}(R)$ to $\ur i$ is injective
$$\ur i\cong \varphi(\ur i).$$
\end{lm}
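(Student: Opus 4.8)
The plan is to prove that the kernel of $\varphi$ restricted to $\ur i$ is trivial; since surjectivity onto $\varphi(\ur i)$ is automatic, this gives the claimed isomorphism. The engine of the argument is the normal form provided by the Corollary to Lemma~\ref{heis}, combined with an explicit computation of the image ESD-transformation and the recovery of its parameters from a single coordinate.

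First I would take an arbitrary $g\in\ur i$ with $\varphi(g)=1$ and, by that Corollary, write
$$g=Y_{i,-i}(\alpha)\prod_{j}Y_{ij}(a_j),\qquad\alpha\in\Gamma,\ a_j\in I,\ j\neq\pm i.$$
Applying $\varphi$, recalling $\varphi(Y_{ij}(a))=T_{ij}(a)=T(e_i,\,e_{-j}a\eps{-j},\,0)$ and $\varphi(Y_{i,-i}(\alpha))=T_{i,-i}(\alpha)=T(e_i,\,0,\,\alpha)$, I would collapse the product with Lemma~\ref{esd-properties}(2). As all factors share the first argument $e_i$, the image is a single transformation
$$\varphi(g)=T(e_i,\,v,\,a),\qquad v=\sum_{j}e_{-j}a_j\eps{-j},\quad a=\alpha+c,$$
where $c=\sum_{j<k}\lan e_{-j}a_j\eps{-j},\,e_{-k}a_k\eps{-k}\ran$ is the quadratic correction produced by part~(2). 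Note that the support of $v$ avoids $\pm i$, so $v_i=v_{-i}=0$.

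Next I would extract the parameters from the action of $\varphi(g)$ on the single vector $e_{-i}$. Using $\lan e_i,\,e_{-i}\ran=\eps i$ and $\lan v,\,e_{-i}\ran=\eps i\,v_i=0$ in the defining formula for the ESD-transformation gives
$$\varphi(g)(e_{-i})=e_{-i}+e_i\,a\eps i+v\eps i.$$
The hypothesis $\varphi(g)=1$ therefore forces $v=0$ and $a\eps i=0$, i.e.\ $a=0$. Reading off coordinates, $v=0$ yields $a_j=v_{-j}\eps{-j}=0$ for every $j$, since the indices $-j$ for $j\neq\pm i$ occupy distinct basis positions; consequently the correction $c$ vanishes and $a=\alpha=0$.

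Finally, substituting $a_j=0$ and $\alpha=0$ back into the normal form and invoking relation KL1 (which forces $Y_{ij}(0)=Y_{i,-i}(0)=1$), I conclude $g=1$, so $\ker(\varphi|_{\ur i})=1$ and $\ur i\cong\varphi(\ur i)$. The only delicate point is faithfully translating the abstract normal form of the Corollary into the concrete ESD-transformation and confirming that the coordinates $v_{-j}$ genuinely separate the indices $j\neq\pm i$; once the evaluation on $e_{-i}$ is available the parameters are recovered at once, so that uniqueness of the normal form is not even needed.
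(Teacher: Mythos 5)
Your proof is correct and takes essentially the same route as the paper: decompose an element of $\ur i$ via the normal form from the corollary to Lemma~\ref{heis}, apply $\varphi$ to collapse the product into a single ESD-transformation $T(e_i,\,v,\,a)$, and conclude that all parameters vanish. The paper states the last implication without detail; your evaluation of $T(e_i,\,v,\,a)$ on $e_{-i}$ supplies exactly that missing verification.
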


\begin{proof}
Take an element $x\in\ur i$. Using the above corollary, decompose $x$ as $$x=Y_{i,-i}(a_{-i})Y_{i,-l}(a_{-l})\ldots Y_{i,-1}(a_{-1})Y_{i,1}(a_1)\ldots Y_{i,l}(a_l).$$ Then $\varphi(x)=1$ implies that $a_i=0$ for all $i$.
\end{proof}

\begin{lm}
\label{unipotent-decomposition}
Take $v\in I^{2l}$ such that $v_{-i}=0$ and $\alpha\in\Gamma$. Denote $$v_-=\sum_{k<0}e_kv_k\qquad \text{and}\qquad v_+=\sum_{k>0}e_kv_k.$$ Then
\begin{multline*}
T(e_i,\,v,\,\lan v_-,\,v_+\ran+\alpha)=T_{i,-i}(\alpha+2v_i)\cdot\\ \cdot T_{-l,-i}(v_{-l}\eps i)\ldots T_{-1,-i}(v_{-1}\eps i)T_{1,-i}(v_1\eps i)\ldots T_{l,-i}(v_l\eps i).
\end{multline*}
\end{lm}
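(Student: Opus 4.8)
The plan is to move the whole identity into $\Sp(V)$ and evaluate both sides as a single ESD-transformation based at $e_i$, comparing them through the additivity law of Lemma~\ref{esd-properties}(2). No Steinberg relations enter: the right-hand side is literally a product of transvections in $\Sp(V)$, so all I need are the four properties of Lemma~\ref{esd-properties} together with the doubling identity $T(u,\,ua,\,0)=T(u,\,0,\,2a)$, which follows at once from the defining formula for $T$ (both maps send $w$ to $w+2ua\lan u,\,w\ran$).

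First I would bring every short-root factor to a common shape. Unfolding the definition gives $T_{k,-i}(v_k\eps i)=T(e_k,\,e_iv_k\eps i\eps i,\,0)=T(e_k,\,e_iv_k,\,0)$, and then Lemma~\ref{esd-properties}(3) rewrites this as $T(e_i,\,e_kv_k,\,0)$, so all the short-root factors (those with $k\neq\pm i$) now share the first argument $e_i$. Setting $w=\sum_{k\neq\pm i}e_kv_k=v-e_iv_i$ for the transverse part of $v$ (recall $v_{-i}=0$), I would collapse the ordered product $\prod_{k\neq\pm i}T(e_i,\,e_kv_k,\,0)$ by iterating Lemma~\ref{esd-properties}(2): the middle slots add up to $w$, while the scalar slot accumulates the cross terms $\sum_{k<k'}\lan e_kv_k,\,e_{k'}v_{k'}\ran$. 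Since $\lan e_k,\,e_{k'}\ran\neq0$ forces $k'=-k$, and the prescribed order lists each negative index before its positive partner, only the pairs with $k<0$ contribute, and they sum to exactly $\lan v_-,\,v_+\ran$. Thus the product equals $T(e_i,\,w,\,\lan v_-,\,v_+\ran)$.

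It then remains to fold in the long-root factor and to pass from $w$ back to $v$. Multiplying on the left by $T_{i,-i}(\alpha+2v_i)=T(e_i,\,0,\,\alpha+2v_i)$ and applying Lemma~\ref{esd-properties}(2) once more (with trivial correction, $\lan0,\,w\ran=0$) turns the entire right-hand side into $T(e_i,\,w,\,\alpha+2v_i+\lan v_-,\,v_+\ran)$. For the left-hand side I would split $v=w+e_iv_i$ and use Lemma~\ref{esd-properties}(2) to factor $T(e_i,\,v,\,c)=T(e_i,\,w,\,c)\,T(e_i,\,e_iv_i,\,0)$, which is legitimate because $\lan w,\,e_iv_i\ran=0$; the doubling identity $T(e_i,\,e_iv_i,\,0)=T(e_i,\,0,\,2v_i)$ and one more collapse give $T(e_i,\,v,\,c)=T(e_i,\,w,\,c+2v_i)$. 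Taking $c=\alpha+\lan v_-,\,v_+\ran$ makes the two sides identical.

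I expect the one genuinely delicate point to be the self-coordinate $v_i$ along the base direction $e_i$ itself: it never appears as an honest short-root factor, yet it is responsible for the ``$2v_i$'' shift and forces the doubling identity, so it must be peeled off into the transverse part $w$ and tracked separately throughout. The only other place demanding care is the bookkeeping of the symplectic cross terms, where the left-to-right ordering of the factors must be respected so that the accumulated scalar assembles into $\lan v_-,\,v_+\ran$ with the correct sign rather than its negative.
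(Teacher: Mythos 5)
Your proof is correct and follows essentially the same route as the paper's: rewrite each short-root factor as $T(e_i,\,e_kv_k,\,0)$ via Lemma~\ref{esd-properties}(3), collapse the product with the additivity law of Lemma~\ref{esd-properties}(2) so that the cross terms assemble into $\lan v_-,\,v_+\ran$, and account for the $2v_i$ shift through the doubling identity $T(e_i,\,e_iv_i,\,0)=T(e_i,\,0,\,2v_i)$. The only difference is organizational: the paper absorbs $e_iv_i$ into the block $v_+$ (respectively $v_-$), which forces a case split on the sign of $i$, whereas you peel it off into the transverse part $w=v-e_iv_i$ and treat it uniformly.
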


\begin{proof}
Assume that $i>0$, for $i<0$ the proof looks exactly the same. Since $T_{j,-i}(v_j\eps i)=T(e_i,\,e_jv_j,\,0)$ for $j\neq-i$ and $$T_{i,-i}(2v_i)=T(e_i,\,0,\,2v_i)=T(e_i,\,e_iv_i,\,0),$$ one has 
\begin{multline*}
T_{-l,-i}(v_{-l}\eps i)\ldots T_{-1,-i}(v_{-1}\eps i)=\\=T(e_i,\,e_{-l}v_{-l},\,0)\ldots T(e_i,\,e_{-1}v_{-1},\,0)=T(e_i,\,v_-,\,0),
\end{multline*}
and
\begin{multline*}
T_{i,-i}(2v_i)T_{1,-i}(v_{1}\eps i)\ldots T_{l,-i}(v_{l}\eps i)=\\=T(e_i,\,e_iv_i,\,0)T(e_i,\,e_{1}v_{1},\,0)\ldots T(e_i,\,e_{l}v_{l},\,0)=T(e_i,\,v_+,\,0),
\end{multline*}
so that the right hand side of the desired equality is in fact equal to
\begin{multline*}
T_{i,-i}(\alpha)T(e_i,\,v_-,\,0)T(e_i,\,v_+,\,0)=\\=T(e_i,\,0,\,\alpha)T(e_i,v,\lan v_-,\,v_+\ran)=T(e_i,\,v,\,\lan v_-,\,v_+\ran+\alpha).
\end{multline*}
\end{proof}

\begin{df}
For $v\in I^{2l}$ with $v_{-i}=0$ and $a\in R$ such that $a-\lan v_-,\,v_+\ran\in\Gamma$, define $$Y(e_i,\,v,\,a)=(\phi\vert_{\ur i})\inv\big(T(e_i,\,v,\,a)\big).$$
\end{df}

\begin{rk}
By Lemma~\ref{unipotent-decomposition}, $T(e_i,\,v,\,a)$ indeed lies in $\phi(\ur i)$. Moreover, the same lemma provides the following decomposition.
\end{rk}

\begin{lm}
\label{y-decomposition}
For $v\in I^{2l}$ such that $v_{-i}=0$, $a\in R$ such that $a-\lan v_-,\,v_+\ran\in\Gamma$ one has
\begin{multline*}
Y(e_i,\,v,\,a)=Y_{i,-i}(a+2v_i-\lan v_-,\,v_+\ran)\cdot\\ \cdot Y_{-l,-i}(v_{-l}\eps i)\ldots Y_{-1,-i}(v_{-1}\eps i)Y_{1,-i}(v_1\eps i)\ldots Y_{l,-i}(v_l\eps i).
\end{multline*}
\end{lm}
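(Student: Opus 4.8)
The plan is to read off this identity from Lemma~\ref{unipotent-decomposition} through the canonical isomorphism $\ur i\cong\varphi(\ur i)$ supplied by Lemma~\ref{unirad}. Put $\alpha=a-\lan v_-,\,v_+\ran$, so that $\alpha\in\Gamma$ by hypothesis and $a=\lan v_-,\,v_+\ran+\alpha$. Denote by $w$ the product on the right-hand side of the asserted equality, that is, $w=Y_{i,-i}(\alpha+2v_i)\cdot Y_{-l,-i}(v_{-l}\eps i)\cdots Y_{l,-i}(v_l\eps i)$, where the off-diagonal product runs over the indices $j\neq\pm i$ (the contribution of $j=i$ having been absorbed into the leading diagonal factor, exactly as in Lemma~\ref{unipotent-decomposition}).

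First I would check that $w$ is a bona fide element of $\ur i$, i.e.\ that each factor is a legitimate relative generator. The off-diagonal factors $Y_{j,-i}(v_j\eps i)$ are fine, because $v_j\in I$ forces $v_j\eps i\in I$. The diagonal factor requires its argument to lie in $\Gamma$: here $\alpha+2v_i$ is a sum of $\alpha\in\Gamma$ and $2v_i$, and $2v_i\in\Gamma$ since $v_i\in I$ and $2I\subseteq\Gamma$ by the first axiom defining a relative form parameter. Hence $\alpha+2v_i\in\Gamma$ and $w\in\ur i$.

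It remains to compare images under $\varphi$. Since $\varphi$ sends every relative generator to the corresponding elementary transvection, $\varphi(w)$ is precisely the right-hand side of Lemma~\ref{unipotent-decomposition}, which that lemma identifies with $T(e_i,\,v,\,\lan v_-,\,v_+\ran+\alpha)=T(e_i,\,v,\,a)$. By definition $Y(e_i,\,v,\,a)$ is the unique element of $\ur i$ with $\varphi\big(Y(e_i,\,v,\,a)\big)=T(e_i,\,v,\,a)$, uniqueness being the injectivity of $\varphi\vert_{\ur i}$ from Lemma~\ref{unirad}. As $w$ lies in $\ur i$ and has the same image, $w=Y(e_i,\,v,\,a)$, which is the claim. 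I foresee no real obstacle: the statement merely transports Lemma~\ref{unipotent-decomposition} across $\ur i\cong\varphi(\ur i)$, the one point needing genuine care being the membership $\alpha+2v_i\in\Gamma$, which uses the form-parameter axioms rather than the maximal choice $\Gamma=I$.
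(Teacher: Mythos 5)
Your argument is correct and is exactly the paper's intended one: the paper disposes of this lemma in the remark immediately preceding it, saying that Lemma~\ref{unipotent-decomposition} "provides the following decomposition" once one transports it through the definition $Y(e_i,\,v,\,a)=(\varphi\vert_{\ur i})\inv\big(T(e_i,\,v,\,a)\big)$ and the injectivity from Lemma~\ref{unirad}. Your additional check that $\alpha+2v_i\in\Gamma$ (via $2I\subseteq\Gamma$), so that the long-root factor is a legitimate relative generator, is a point the paper leaves implicit but is correctly and usefully spelled out.
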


\setcounter{cl}{0}

\begin{cl*}
In particular, $Y(e_{-j},\,-e_ia\eps j,\,0)=Y_{ij}(a)$ for $i\not\in\{\pm j\}$ and $Y(e_i,\,0,\,\alpha)=Y_{i,-i}(\alpha)$.
\end{cl*}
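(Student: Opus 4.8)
The plan is to read off both identities directly from Lemma~\ref{y-decomposition}: each is the instance of that decomposition in which $v$ has at most one nonzero coordinate, so the only work is to verify the hypotheses and then collapse all the trivial factors. Throughout I use that a generator with zero argument is trivial; for the short roots this is immediate from relation \textup{KL1}, and for the long roots it follows from $Y(e_i,\,0,\,0)=1$ (the preimage under the isomorphism $\varphi\vert_{\ur i}$ of $T(e_i,\,0,\,0)=1$), which, combined with the second identity below specialised at $\alpha=0$, gives $Y_{i,-i}(0)=1$ for every $i$.

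For $Y(e_i,\,0,\,\alpha)=Y_{i,-i}(\alpha)$ I would specialise Lemma~\ref{y-decomposition} to $v=0$, $a=\alpha$. The hypotheses hold at once: $0\in I^{2l}$, $v_{-i}=0$, and $\alpha-\lan v_-,\,v_+\ran=\alpha\in\Gamma$. As every coordinate of $v$ vanishes we have $v_i=0$ and $\lan v_-,\,v_+\ran=0$, so all the off-diagonal factors $Y_{k,-i}(0)$ disappear and the decomposition leaves precisely $Y_{i,-i}(\alpha+2\cdot0-0)=Y_{i,-i}(\alpha)$.

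For $Y(e_{-j},\,-e_ia\eps j,\,0)=Y_{ij}(a)$ I would apply Lemma~\ref{y-decomposition} with the index $i$ there taken to be $-j$, with vector $v=-e_ia\eps j$ and third argument $0$. The checks are: $v\in I^{2l}$ since $a\in I$; the coordinate condition $v_{-(-j)}=v_j=0$, valid because $i\neq j$; and $0-\lan v_-,\,v_+\ran\in\Gamma$. Since $v$ has the single nonzero coordinate $v_i=-a\eps j$, which lies wholly in $v_-$ or wholly in $v_+$, we get $\lan v_-,\,v_+\ran=0$, so the last condition is just $0\in\Gamma$. The diagonal factor is $Y_{-j,\,j}\big(2v_{-j}-\lan v_-,\,v_+\ran\big)=Y_{-j,\,j}(0)=1$, using $v_{-j}=0$ (valid because $i\neq-j$); of the remaining factors $Y_{k,\,j}(v_k\eps{-j})$ only $k=i$ is nonzero, giving $Y_{i,\,j}(v_i\eps{-j})$.

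Finally one simplifies the argument $v_i\eps{-j}=-a\,\eps j\,\eps{-j}$. The only point that needs care — essentially the only place a sign could go wrong — is $\eps{-j}=-\eps j$, whence $\eps j\,\eps{-j}=-\eps{j}^{2}=-1$ and $v_i\eps{-j}=a$; this yields $Y(e_{-j},\,-e_ia\eps j,\,0)=Y_{ij}(a)$. Thus there is no substantial obstacle: both identities are a single substitution into Lemma~\ref{y-decomposition} followed by bookkeeping, the only details to watch being the membership $\lan v_-,\,v_+\ran=0\in\Gamma$ and the sign convention $\eps{-j}=-\eps j$.
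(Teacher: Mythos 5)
Your proposal is correct and matches the paper's (implicit) argument: the corollary is exactly the specialisation of Lemma~\ref{y-decomposition} to $v=0$ and to $v=-e_ia\eps j$ with base index $-j$, and your sign bookkeeping $\eps j\eps{-j}=-1$ and the vanishing of the trivial factors are all as intended. The only superfluous detour is deriving $Y_{i,-i}(0)=1$ via $Y(e_i,\,0,\,0)=1$; relation KL1 applied to the long-root generators gives it directly.
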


\begin{lm}
\label{y-add}
For $v$, $w\in I^{2l}$ such that $v_{-i}=w_{-i}=0$ and $a$, $b\in R$ such that $a-\lan v_-,\,v_+\ran$, $b-\lan w_-,\,w_+\ran\in\Gamma$, one has 
$$
Y(e_i,\,v,\,a)Y(e_i,\,w,\,b)=Y(e_i,\,v+w,\,a+b+\lan v,\,w\ran).
$$
\end{lm}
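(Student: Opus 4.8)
The plan is to use that $Y(e_i,\,v,\,a)$ is, by definition, the unique element of $\ur i$ whose image under the projection $\varphi$ is the ESD-transformation $T(e_i,\,v,\,a)$, where by Lemma~\ref{unirad} the restriction $\varphi\vert_{\ur i}$ is injective. Since $\ur i$ is a subgroup, the left-hand side $Y(e_i,\,v,\,a)Y(e_i,\,w,\,b)$ lies in $\ur i$, and so does the right-hand side once it is shown to be well-defined. Thus the identity will follow once I check two things: that the triple $(e_i,\,v+w,\,a+b+\lan v,\,w\ran)$ satisfies the conditions needed for $Y(e_i,\,v+w,\,a+b+\lan v,\,w\ran)$ to make sense, and that both sides have equal image under $\varphi\vert_{\ur i}$.

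For well-definedness I would verify the two defining conditions. The coordinate condition $(v+w)_{-i}=v_{-i}+w_{-i}=0$ is immediate. The form-parameter condition asks that $(a+b+\lan v,\,w\ran)-\lan(v+w)_-,\,(v+w)_+\ran$ lie in $\Gamma$. Expanding the form bilinearly, using that $\lan v_-,\,w_-\ran=\lan v_+,\,w_+\ran=0$ (the form never pairs two negative or two positive basis indices) and the antisymmetry $\lan w_-,\,v_+\ran=-\lan v_+,\,w_-\ran$, this expression simplifies to $(a-\lan v_-,\,v_+\ran)+(b-\lan w_-,\,w_+\ran)+2\lan v_+,\,w_-\ran$. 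The first two summands lie in $\Gamma$ by hypothesis; the third lies in $\Gamma$ because $\lan v_+,\,w_-\ran\in I$ (all coordinates of $v$, $w$ lie in the ideal $I$) and $2c\in\Gamma$ for every $c\in I$ by the first form-parameter axiom. I expect this computation, rather than any group-theoretic step, to be the main obstacle, since it is where the structure of the form parameter is genuinely used.

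For the equality of images, applying the homomorphism $\varphi$ and using $\varphi(Y(e_i,\,\ast,\,\ast))=T(e_i,\,\ast,\,\ast)$ reduces the claim to $T(e_i,\,v,\,a)T(e_i,\,w,\,b)=T(e_i,\,v+w,\,a+b+\lan v,\,w\ran)$. This is precisely Lemma~\ref{esd-properties}(ii) with $u=e_i$, whose hypotheses $\lan e_i,\,v\ran=\eps i\,v_{-i}=0$ and $\lan e_i,\,w\ran=\eps i\,w_{-i}=0$ hold because $v_{-i}=w_{-i}=0$. Since $\varphi\vert_{\ur i}$ is injective, equality of images forces equality of the two elements of $\ur i$, which is the assertion.
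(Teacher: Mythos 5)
Your proof is correct and follows essentially the same route as the paper: check that $(v+w)_{-i}=0$ and that the form-parameter condition holds for the right-hand side, then conclude by comparing images under the injective map $\varphi\vert_{\ur i}$ via the ESD product formula. Your expansion of the well-definedness condition is in fact slightly more careful than the paper's, which states the difference as exactly $(a-\lan v_-,\,v_+\ran)+(b-\lan w_-,\,w_+\ran)$ and silently drops the $2\lan v_+,\,w_-\ran\in2I\subseteq\Gamma$ term that you correctly account for.
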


\begin{proof}
Obviously, $(v+w)_{-i}=0$. Moreover, 
$$
a+b+\lan v,\,w\ran-\lan (v+w)_-,\,(v+w)_+\ran=a-\lan v_-,\,v_+\ran+b-\lan w_-,\,w_+\ran\in\Gamma,
$$
so that the right hand side of this equality is well-defined. Now, it remains to observe that the images of the elements on both sides under $\varphi$ coincide.
\end{proof}

\begin{cl*}
One has $Y(e_i,\,0,\,0)=1$ and $\,Y(e_i,\,v,\,a)\inv=Y(e_i,\,-v,\,-a)$.
\end{cl*}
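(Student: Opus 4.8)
The plan is to deduce both identities directly from the multiplicativity formula of Lemma~\ref{y-add}, which already expresses products of the symbols $Y(e_i,\,v,\,a)$ via addition of their arguments, so no return to the defining construction is needed. For the first identity, note that $Y(e_i,\,0,\,0)$ is well-defined, since $v=0$ trivially satisfies $v_{-i}=0$ and $0-\lan 0,\,0\ran=0\in\Gamma$. Applying Lemma~\ref{y-add} with $v=w=0$ and $a=b=0$ yields
$$Y(e_i,\,0,\,0)Y(e_i,\,0,\,0)=Y(e_i,\,0,\,0),$$
so $Y(e_i,\,0,\,0)$ is idempotent in a group and hence equals the identity.

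For the inverse formula I would first verify that $Y(e_i,\,-v,\,-a)$ is itself well-defined, and this is the only genuinely nontrivial point in the argument. Since $(-v)_{-i}=0$ holds automatically, the condition to check is $-a-\lan(-v)_-,\,(-v)_+\ran\in\Gamma$. Using $(-v)_\pm=-v_\pm$ and bilinearity, this quantity equals $-a-\lan v_-,\,v_+\ran$, which I rewrite as $-(a-\lan v_-,\,v_+\ran)-2\lan v_-,\,v_+\ran$. The first summand lies in $\Gamma$ because $\Gamma$ is a subgroup and $a-\lan v_-,\,v_+\ran\in\Gamma$ by hypothesis; the second lies in $\Gamma$ by axiom (a) of a form parameter, since $\lan v_-,\,v_+\ran$ is a sum of products of coordinates of $v$ and therefore lies in $I$. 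It is precisely here that the form-parameter axioms are used rather than the mere group structure of $\Gamma$, so I would single this verification out as the step deserving care.

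With well-definedness established, I would apply Lemma~\ref{y-add} to the pair $(v,\,a)$, $(-v,\,-a)$ to obtain
$$Y(e_i,\,v,\,a)Y(e_i,\,-v,\,-a)=Y(e_i,\,0,\,a-a+\lan v,\,-v\ran)=Y(e_i,\,0,\,0),$$
where I use that the symplectic form is alternating, $\lan v,\,-v\ran=-\lan v,\,v\ran=0$. Combined with the first part, the right-hand side is the identity, whence $Y(e_i,\,-v,\,-a)=Y(e_i,\,v,\,a)\inv$, completing the argument.
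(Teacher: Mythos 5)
Your argument is correct and is exactly the intended derivation: the corollary is stated immediately after Lemma~\ref{y-add} precisely because both identities follow from that additivity formula (idempotence for the first, cancellation with $\lan v,\,-v\ran=0$ for the second). Your explicit verification that $Y(e_i,\,-v,\,-a)$ is well-defined, via $-a-\lan v_-,\,v_+\ran=-(a-\lan v_-,\,v_+\ran)-2\lan v_-,\,v_+\ran$ and the axiom $2I\subseteq\Gamma$, is a detail the paper leaves implicit but is checked correctly.
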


\begin{lm}
\label{wow}
For $f\in\Ep_{2l}(R)$, $v\in I^{2l}$ one has 
$$\lan(fv)_-,\,(fv)_+\ran-\lan v_-,\,v_+\ran\in\Gamma.$$
\end{lm}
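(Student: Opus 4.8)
The plan is to treat $q(v)=\lan v_-,\,v_+\ran$ as a quadratic map on $I^{2l}$ and to control its additive defect
$$\mu(f,\,v)=\lan(fv)_-,\,(fv)_+\ran-\lan v_-,\,v_+\ran$$
under the elementary group. Two preliminary remarks drive the reduction. First, since $I$ is an ideal and every element of $\Ep_{2l}(R)=\Ep_{2l}(R,\,R,\,R)$ has entries in $R$, the submodule $I^{2l}$ is $\Ep_{2l}(R)$-stable, so $gv\in I^{2l}$ whenever $g\in\Ep_{2l}(R)$ and $v\in I^{2l}$. Second, directly from the definition one has the cocycle identity $\mu(fg,\,v)=\mu(f,\,gv)+\mu(g,\,v)$. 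Recalling that $\Ep_{2l}(R)$ is generated as a subgroup by the elementary transvections $T_{ij}(a)$, $T_{i,-i}(a)$ with $a\in R$ (whose inverses are again such transvections), an induction on the word length of $f$ together with these two remarks reduces the assertion to the single claim that $\mu(g,\,w)\in\Gamma$ for every elementary transvection $g$ and every $w\in I^{2l}$.

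To establish this reduced claim I would set $\delta=gw-w$ and expand, using $w=w_-+w_+$ and bilinearity of $\lan\ ,\ \ran$,
$$\mu(g,\,w)=\lan w_-,\,\delta_+\ran+\lan\delta_-,\,w_+\ran+\lan\delta_-,\,\delta_+\ran.$$
For a short transvection $T_{ij}(a)$ with $j\notin\{\pm i\}$ one computes $\delta=aw_je_i+a\eps{-j}\eps{i}w_{-i}e_{-j}$; here $\lan e_i,\,e_{-j}\ran=\eps{i}\,\delta_{i,\,j}=0$ because $j\neq i$, so the quadratic term $\lan\delta_-,\,\delta_+\ran$ vanishes. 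Splitting into the four cases determined by the signs of $i$ and $j$, the remaining two terms give $\mu(g,\,w)=0$ when $i$ and $j$ have the same sign and $\mu(g,\,w)=-2a\,w_jw_{-i}$ when they have opposite signs. For the long transvection $T_{i,-i}(a)=T(e_i,\,0,\,a)$ one has $\delta=\eps{i}\,a\,w_{-i}e_i$, and a similar computation yields $\mu(g,\,w)=-\eps{i}\,a\,w_{-i}^2$.

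Finally I would check that each defect lies in $\Gamma$: the value $0$ does trivially; $-2a\,w_jw_{-i}=2\cdot(-a\,w_jw_{-i})$ lies in $\Gamma$ by axiom (a), since $-a\,w_jw_{-i}\in I$ (because $w_j,\,w_{-i}\in I$ and $I$ is an ideal); and $-\eps{i}\,a\,w_{-i}^2$ lies in $\Gamma$ by axiom (b), since $w_{-i}\in I$ and $-\eps{i}\,a\in R$. As $\Gamma$ is an additive subgroup of $I$, the telescoping sum of defects produced by the induction remains in $\Gamma$, completing the argument. The only genuinely laborious part is the sign bookkeeping in the four-case computation; the point I expect to be the real obstacle is conceptual rather than computational, namely noticing that the two nonzero defects are multiples of $2$, respectively squares, so that they are captured by exactly the defining axioms (a) and (b) of a form parameter.
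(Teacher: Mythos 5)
Your proof is correct and takes essentially the same route as the paper's: reduce to the elementary transvections and compute the defect of $\lan v_-,\,v_+\ran$ directly, obtaining $v_{-i}v_jr(\eps{i}\eps{j}-1)\in 2I\subseteq\Gamma$ for a short-root transvection and $-\eps{i}rv_{-i}^2\in\Gamma$ for a long-root one, which are exactly the two quantities captured by the form-parameter axioms. The only difference is presentational: you justify the reduction explicitly via the cocycle identity for $\mu$ and induction on word length, where the paper simply writes ``we may assume that $f$ is an elementary transvection.''
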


\begin{proof}
We may assume that $f$ is an elementary transvection. For short root transvection one has
\begin{multline*}
\lan(T_{ij}(r)v)_-,\,(T_{ij}(r)v)_+\ran-\lan v_-,\,v_+\ran=\\=-(v_i+v_jr)v_{-i}-v_j(v_{-j}-v_{-i}r\eps i\eps j)+v_iv_{-i}+v_jv_{-j}=\\=v_{-i}v_jr(\eps i\eps j-1)\in2I\subseteq\Gamma.
\end{multline*}
For long root transvection one has
\begin{multline*}
\lan(T_{i,-i}(r)v)_-,\,(T_{i,-i}(r)v)_+\ran-\lan v_-,\,v_+\ran=\\=-(v_i+v_{-i}r\eps i)v_{-i}+v_iv_{-i}=-\eps irv_{-i}^2\in\Gamma.
\end{multline*}
\end{proof}

\begin{lm}
\label{y-conjugated-by-ps}
For $g\in\ps i$, $v\in I^{2l}$ such that $v_{-i}=0$, $a\in R$ such that $a-\lan v_-,\,v_+\ran\in\Gamma$, one has 
$$\!\,^g\,Y(e_i,\,v,\,a)=Y(e_i,\,\phi(g)v,\,a).$$
\end{lm}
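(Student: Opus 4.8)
The plan is to reduce the identity to an equality of two elements of $\ur i$ and then check it after applying $\varphi$, using the injectivity of $\varphi\vert_{\ur i}$ from Lemma~\ref{unirad}. The key structural observation is that $\phi(g)$ fixes $e_i$ for every $g\in\ps i$: each generator $X_{kh}(a)$ of $\ps i$ satisfies $i\not\in\{h,-k\}$, and a direct look at $T_{kh}(a)=T(e_k,\,e_{-h}a\eps{-h},\,0)$ shows that exactly this condition forces $T_{kh}(a)e_i=e_i$, whence $\phi(g)e_i=e_i$ for all of $\ps i$. Granting this, I would first verify that the right-hand side $Y(e_i,\,\phi(g)v,\,a)$ is a legitimately defined symbol. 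Its first admissibility condition, $(\phi(g)v)_{-i}=0$, follows from symplectic invariance: $\eps i(\phi(g)v)_{-i}=\lan e_i,\,\phi(g)v\ran=\lan\phi(g)\inv e_i,\,v\ran=\lan e_i,\,v\ran=\eps iv_{-i}=0$, using $\phi(g)\inv e_i=e_i$. The second condition, $a-\lan(\phi(g)v)_-,\,(\phi(g)v)_+\ran\in\Gamma$, follows from the hypothesis $a-\lan v_-,\,v_+\ran\in\Gamma$ combined with Lemma~\ref{wow} applied to $f=\phi(g)\in\Ep_{2l}(R)$.

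Next I would note that both sides live in $\ur i$: the left-hand side by the Levi decomposition lemma (since $Y(e_i,\,v,\,a)\in\ur i$ and $g\in\ps i$), and the right-hand side by construction. Hence by Lemma~\ref{unirad} it suffices to check that $\varphi$ sends both sides to the same element of $\Sp_{2l}(R)$. Here I use the compatibility of the $\St\!\Sp_{2l}(R)$-action with $\varphi$, namely $\varphi(\!\,^g y)=\phi(g)\,\varphi(y)\,\phi(g)\inv$, which is immediate from the way $\varphi$ is built (the relative generators $Y$ map to the transvections $T$, and relations KL0--KL7 hold in $\Sp_{2l}(R)$).

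The computation then is short: $\varphi(\!\,^gY(e_i,\,v,\,a))=\phi(g)\,T(e_i,\,v,\,a)\,\phi(g)\inv$, and by Lemma~\ref{esd-properties}(d) this equals $T(\phi(g)e_i,\,\phi(g)v,\,a)=T(e_i,\,\phi(g)v,\,a)$, using once more that $\phi(g)e_i=e_i$; and the latter is precisely $\varphi(Y(e_i,\,\phi(g)v,\,a))$, so the two images coincide. I expect the only real content beyond bookkeeping to be the well-definedness of the right-hand side, i.e.\ checking that $\phi(g)v$ still satisfies the two admissibility conditions; this is where the fixation of $e_i$ by $\ps i$ and Lemma~\ref{wow} do the work, and it is the step I would be most careful about. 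Everything else is a formal consequence of the properties of ESD-transformations together with the injectivity of $\varphi$ on $\ur i$.
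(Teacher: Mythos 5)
Your proposal is correct and follows essentially the same route as the paper: establish $\phi(g)e_i=e_i$ from the defining condition on the generators of $\ps i$, deduce well-definedness of the right-hand side via symplectic invariance and Lemma~\ref{wow}, and then conclude by comparing images under $\varphi$, which is legitimate because both sides lie in $\ur i$ and $\varphi\vert_{\ur i}$ is injective by Lemma~\ref{unirad}. You merely spell out the final step (the compatibility $\varphi(\!\,^gy)=\phi(g)\varphi(y)\phi(g)\inv$ and Lemma~\ref{esd-properties}) which the paper leaves as ``observe that the images of both sides under $\varphi$ coincide.''
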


\begin{proof}
First, observe that since $T_{kh}(a)e_i=e_i$ for $i\not\in\{h,-k\}$ one has $\phi(g)e_i=e_i$. Thus, $$\lan e_i,\,\phi(g)v\ran=\lan \phi(g)e_i,\,\phi(g)v\ran=\lan e_i,\,v\ran=0,$$ i.e., $(\phi(g)v)_{-i}=0$. By Lemma~\ref{wow}, $a-\lan(\phi(g)v)_-,\,(\phi(g)v)_+\ran\in\Gamma$, so the right hand side of the desired equation is well-defined. Finally, observe that the images of both sides under $\varphi$ coincide.
\end{proof}

\begin{lm}
\label{switch}
For $j\neq-i$, $a\in I$, one has $Y(e_i,\,e_ja,\,0)=Y(e_j,\,e_ia,\,0)$.
\end{lm}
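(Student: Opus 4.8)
The plan is to reduce both sides to a single relative generator via the decomposition of Lemma~\ref{y-decomposition} and then to identify the two resulting generators through the symmetry relation KL0. First I would dispose of the degenerate case: if $i=j$ the two sides are literally the same symbol, so I may assume $i\neq\pm j$ (the hypothesis $j\neq-i$ already excludes $i=-j$). Under this assumption the vector $v=e_ja$ has a single nonzero coordinate, so $v_i=v_{-i}=0$ and $\langle v_-,\,v_+\rangle=0$; in particular both $Y(e_i,\,e_ja,\,0)$ and $Y(e_j,\,e_ia,\,0)$ are well defined.

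Next I would feed $v=e_ja$ into Lemma~\ref{y-decomposition}. Since $v_i=v_{-i}=0$ and $\langle v_-,\,v_+\rangle=0$, the long-root factor $Y_{i,-i}(\cdots)$ collapses to the identity and only one short-root factor survives, and symmetrically for the other side, giving
\begin{align*}
Y(e_i,\,e_ja,\,0)&=Y_{j,-i}(a\eps i),\\
Y(e_j,\,e_ia,\,0)&=Y_{i,-j}(a\eps j);
\end{align*}
these are exactly the values recorded in the corollary to Lemma~\ref{y-decomposition}. It then remains to compare the two right-hand sides by relation KL0, $Y_{pq}(c)=Y_{-q,-p}(-c\eps p\eps q)$: taking $(p,\,q)=(j,\,-i)$ and $c=a\eps i$ and using $\eps{-i}=-\eps i$ together with $\eps i^2=1$ converts $Y_{j,-i}(a\eps i)$ into $Y_{i,-j}(a\eps j)$, which is the assertion.

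The point that makes the statement nonformal—and which I regard as the main obstacle—is that a priori $Y(e_i,\,e_ja,\,0)$ and $Y(e_j,\,e_ia,\,0)$ lie in \emph{different} unipotent radicals $\ur i$ and $\ur j$, so the injectivity of $\varphi$ on a single radical (Lemma~\ref{unirad}) does not by itself identify them, and knowing only that their common image $T(e_i,\,e_ja,\,0)=T(e_j,\,e_ia,\,0)$ under $\varphi$ coincides (Lemma~\ref{esd-properties}(iii)) is insufficient. The role of the decomposition lemma is precisely to bridge the two radicals, rewriting both elements as a single short-root generator, after which KL0 supplies the needed identification entirely within the defining relations of $\St\!\Sp_{2l}(R,\,I,\,\Gamma)$.
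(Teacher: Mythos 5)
Your proposal is correct and follows essentially the same route as the paper: both reduce each side to a single short-root generator via the decomposition of Lemma~\ref{y-decomposition} (as recorded in its corollary) and then identify the two generators by relation KL0, with the case $i=j$ dismissed as trivial. The index bookkeeping $Y(e_i,\,e_ja,\,0)=Y_{j,-i}(a\eps i)$ and the KL0 conversion to $Y_{i,-j}(a\eps j)$ check out.
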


\begin{proof}
For $i=j$ the claim is obvious. Let $i\neq j$, then
$$Y(e_i,\,e_ja,\,0)=Y_{-j,i}(a\eps i)=Y_{-i,j}(-a\eps j)=Y(e_j,\,e_ia,\,0).$$
\end{proof}

\begin{rk}
In the absolute situation $(I,\,\Gamma)=(R,\,R)$, we will write $X(e_i,\,v,\,a)$ instead of $Y(e_i,\,v,\,a)$. These are exactly the elements, which appear in the {\it another presentation}.
\end{rk}

\begin{lm}
\label{z-decomposition-for-y}
For $v\in V$ such that $v_{-j}=v_{k}=v_{-k}=0$, $k\not\in\{\pm j\}$, $a\in R$, $b\in I$ one has
$$
[Y(e_k,\,e_jb,\,0),\,X(e_{-k},\,v,\,a)\rrbracket=Y(e_j,\,vb\eps{k},\,ab^2)Y(e_{-k},\,-e_jab\eps{-k},\,0).
$$
\end{lm}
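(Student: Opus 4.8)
The plan is to prove the identity the same way as the surrounding lemmas: exhibit both sides as elements of a single unipotent radical $\ur j$, where $\varphi$ is injective by Lemma~\ref{unirad}, and then match their images in $\Sp_{2l}(R)$. First I would normalise the generators. Since $v_k=0$, the corollary to Lemma~\ref{y-decomposition} gives $Y(e_k,\,e_jb,\,0)=Y_{j,-k}(b\eps k)\in\ur j$, so the left-hand side is the commutator $[Y_{j,-k}(b\eps k),\,X(e_{-k},\,v,\,a)\rrbracket$. On the right, $Y(e_j,\,vb\eps k,\,ab^2)$ already lies in $\ur j$, and Lemma~\ref{switch} rewrites the second factor as $Y(e_{-k},\,-e_jab\eps{-k},\,0)=Y(e_j,\,-e_{-k}ab\eps{-k},\,0)\in\ur j$. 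I would also record that both right-hand symbols are well defined: $(vb\eps k)_{-j}=0$ because $v_{-j}=0$, and $ab^2-\lan(vb\eps k)_-,\,(vb\eps k)_+\ran=b^2(a-\lan v_-,\,v_+\ran)\in\Gamma$ by property~(b) of a form parameter; the long-root symbol is handled the same way.

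The key step is to place the left-hand side in $\ur j$ as well. Expanding $X(e_{-k},\,v,\,a)$ by the absolute form of Lemma~\ref{y-decomposition} and using $v_k=v_{-k}=0$, it is a product of $X_{-k,k}(a-\lan v_-,\,v_+\ran)$ and short-root transvections $X_{m,k}(v_m\eps{-k})$. Every such factor lies in $\ps j$; the only one that could violate the condition defining $\ps j$ is $X_{-j,k}$, and it is absent precisely because $v_{-j}=0$. Hence $X(e_{-k},\,v,\,a)\in\ps j$, the Levi decomposition lemma yields $\!\,^{X(e_{-k},\,v,\,a)}Y_{j,-k}(-b\eps k)\in\ur j$, and therefore the whole commutator lies in $\ur j$. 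This reliance on $v_{-j}=0$ is the heart of the argument and the main obstacle: without it the conjugate would escape $\ur j$ and the single-radical injectivity could not be applied.

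It then remains to compare images under $\varphi$. As $\varphi$ intertwines the $\St\!\Sp_{2l}(R)$-action with conjugation through $\phi$, the left-hand side maps to the ordinary commutator $[T(e_k,\,e_jb,\,0),\,T(e_{-k},\,v,\,a)]$, which is a legitimate input for Lemma~\ref{z-decomposition} since $\lan e_jb,\,v\ran=b\eps j v_{-j}=0$. That lemma evaluates it to $T(e_jb,\,v\eps k,\,a)\,T(e_{-k},\,-e_jab\eps{-k},\,0)$, and a one-line check from the defining formula gives the scaling relation $T(e_jb,\,w,\,a)=T(e_j,\,wb,\,ab^2)$ for the central scalar $b$, turning the first factor into $T(e_j,\,vb\eps k,\,ab^2)$. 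This is exactly the image of the rewritten right-hand side, so the two sides of the asserted identity have equal image in $\ur j$ and, by Lemma~\ref{unirad}, coincide.
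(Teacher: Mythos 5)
Your proof is correct, and it is worth recording because the paper itself offers no argument here: it merely asserts that the proof of Lemma~12 of~\cite{L3} ``can be repeated verbatim''. Judging from the fully written-out proof of the sibling Lemma~\ref{ppc}, that external argument is of the computational type --- decompose one argument of the bracket into elementary generators and evaluate the resulting commutators one at a time via the relations KL2--KL6, then reassemble. You instead use the ``single unipotent radical plus injectivity'' device that the paper deploys for Lemmas~\ref{y-add} and~\ref{y-conjugated-by-ps}, and every step checks out: $Y(e_k,\,e_jb,\,0)=Y_{j,-k}(b\eps k)\in\ur j$; the decomposition of $X(e_{-k},\,v,\,a)$ from Lemma~\ref{unipotent-decomposition} contains no factor $X_{-j,k}$ precisely because $v_{-j}=0$, so $X(e_{-k},\,v,\,a)\in\ps j$ and the Levi decomposition keeps the whole left-hand side inside $\ur j$; the right-hand side lies in $\ur j$ after Lemma~\ref{switch}, with the well-definedness check $(a-\lan v_-,\,v_+\ran)b^2\in\Gamma$ done correctly via axiom~b) of a form parameter; and the images under $\varphi$ agree by Lemma~\ref{z-decomposition} combined with the scaling identity $T(e_jb,\,w,\,a)=T(e_j,\,wb,\,ab^2)$, so Lemma~\ref{unirad} concludes. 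You also correctly isolate why this shortcut is available here but not for Lemma~\ref{ppc}: there the element $X(e_k,\,e_jr,\,0)=X_{j,-k}(r\eps k)$ does not lie in $\ps{-k}$, so no single radical contains both sides a priori, and a genuine computation is forced. Your route is shorter, self-contained within this paper rather than deferring to an external source, and makes the role of the hypothesis $v_{-j}=0$ transparent; the cited computation has the advantage of not presupposing the injectivity of $\varphi$ on $\ur j$.
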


\begin{proof}
The proof of Lemma~12 from the Another presentation paper can be repeated verbatim.
\end{proof}

\begin{cl*}
For $v\in V$ such that $v_{-j}=v_{k}=v_{-k}=0$, $k\not\in\{\pm j\}$, $a\in R$, $b\in I$ one has the following decomposition
$$
Y(e_j,\,vb,\,ab^2)=[Y(e_k,\,e_jb,\,0),\,X(e_{-k},\,v\eps{k},\,a)\rrbracket Y(e_{-k},\,e_jab\eps{-k},\,0).
$$
\end{cl*}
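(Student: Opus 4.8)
The plan is to obtain this corollary as an immediate reformulation of the preceding Lemma~\ref{z-decomposition-for-y}, requiring no new group-theoretic input. First I would apply that lemma with the vector $v$ replaced by $v\eps{k}$. Since $\eps{k}=\pm1$, rescaling $v$ by $\eps{k}$ does not affect the vanishing of the coordinates $v_{-j}$, $v_k$, $v_{-k}$, so all hypotheses of the lemma remain satisfied and the substitution is legitimate. This yields the identity $[Y(e_k,\,e_jb,\,0),\,X(e_{-k},\,v\eps{k},\,a)\rrbracket=Y(e_j,\,v\eps{k}b\eps{k},\,ab^2)Y(e_{-k},\,-e_jab\eps{-k},\,0)$.

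The key simplification is that $\eps{k}^2=1$, whence $v\eps{k}b\eps{k}=vb$ and the first factor on the right is exactly $Y(e_j,\,vb,\,ab^2)$. I would then solve for this factor by multiplying the resulting identity on the right by the inverse of $Y(e_{-k},\,-e_jab\eps{-k},\,0)$. By the corollary to Lemma~\ref{y-add} one has $Y(e_i,\,w,\,c)\inv=Y(e_i,\,-w,\,-c)$, so this inverse equals $Y(e_{-k},\,e_jab\eps{-k},\,0)$, and the rearrangement produces precisely the claimed decomposition $Y(e_j,\,vb,\,ab^2)=[Y(e_k,\,e_jb,\,0),\,X(e_{-k},\,v\eps{k},\,a)\rrbracket Y(e_{-k},\,e_jab\eps{-k},\,0)$.

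There is essentially no obstacle in this argument: it is a pure substitution and rearrangement of an identity already established. The only points that warrant attention are the bookkeeping of the signs $\eps{k}$ and $\eps{-k}$ — in particular the cancellation $\eps{k}^2=1$ that collapses $v\eps{k}b\eps{k}$ to $vb$ — and the preliminary check that the rescaled vector $v\eps{k}$ still meets the coordinate conditions needed to invoke Lemma~\ref{z-decomposition-for-y}.
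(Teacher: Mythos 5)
Your argument is correct and is exactly the derivation the paper intends: the corollary is stated without proof as an immediate rearrangement of Lemma~\ref{z-decomposition-for-y}, obtained by substituting $v\eps{k}$ for $v$ (legitimate since $\eps{k}=\pm1$ preserves the coordinate conditions), using $\eps{k}^2=1$, and cancelling the factor $Y(e_{-k},\,-e_jab\eps{-k},\,0)$ via $Y(e_i,\,w,\,c)\inv=Y(e_i,\,-w,\,-c)$. Nothing is missing.
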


\begin{lm}
\label{ppc}
For $j\not\in\{\pm k\}$, $v\in I^{2l}$ such that $v_{-j}=v_{k}=v_{-k}=0$, $r\in R$, $a$ such that $a-\lan v_-,\,v_+\ran\in\Gamma$, one has
$$
\llbracket X(e_k,\,e_jr,\,0),\,Y(e_{-k},\,v,\,a)]=Y(e_j,\,vr\eps k,\,ar^2)Y(e_{-k},\,e_jar\eps k,\,0).
$$
\end{lm}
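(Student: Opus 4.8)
The plan is to verify the identity by comparing the images of both sides under $\varphi$ and invoking an injectivity statement, after a preliminary rearrangement. Write $g=X(e_k,\,e_jr,\,0)=X_{k,-j}(r\eps j)$ and $h=Y(e_{-k},\,v,\,a)\in\ur{-k}$, so that the left hand side is $\!\,^gh\cdot h\inv$. Multiplying the asserted equality on the right by $h$ and merging the resulting two $\ur{-k}$-factors by Lemma~\ref{y-add} (the cross term $\lan e_jar\eps k,\,v\ran$ vanishes because $v_{-j}=0$), one reduces the claim to the equivalent form
\[
\!\,^gY(e_{-k},\,v,\,a)=Y(e_j,\,vr\eps k,\,ar^2)\,Y(e_{-k},\,v+e_jar\eps k,\,a),
\]
in which the right hand side manifestly lies in $\ur j\cdot\ur{-k}$ and both arguments satisfy the form-parameter constraint.

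First I would compute the common $\varphi$-image. Since $\phi(g)=T(e_k,\,e_jr,\,0)$ fixes $v$ and sends $e_{-k}$ to $e_{-k}+e_jr\eps k$ (here one uses $j\notin\{\pm k\}$ together with $v_{-j}=v_{-k}=0$), Lemma~\ref{esd-properties}(d) gives $\varphi(\!\,^gY(e_{-k},\,v,\,a))=T(e_{-k}+e_jr\eps k,\,v,\,a)$. It then remains to check the purely symplectic identity
\[
T(e_{-k}+e_jr\eps k,\,v,\,a)=T(e_j,\,vr\eps k,\,ar^2)\,T(e_{-k},\,v+e_jar\eps k,\,a),
\]
which is a routine verification obtained by evaluating both sides on an arbitrary $w\in V$ and using bilinearity of $\lan\ ,\ \ran$ together with the hypotheses $v_{-j}=v_k=v_{-k}=0$ and $\lan e_j,\,e_{-k}\ran=0$.

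To upgrade this equality of images to an equality in $\St\!\Sp_{2l}(R,\,I,\,\Gamma)$, I would argue that $\varphi$ is injective on $\ur j\cdot\ur{-k}$. By Lemma~\ref{unirad} it is injective on each factor, and the images $\varphi(\ur j)$, $\varphi(\ur{-k})$ are the transvection groups along the distinct isotropic vectors $e_j$, $e_{-k}$; since $j\notin\{\pm k\}$ a short direct computation (evaluating a common element on $e_{-j}$) shows $\varphi(\ur j)\cap\varphi(\ur{-k})=1$, whence the product $\varphi(\ur j)\,\varphi(\ur{-k})$ factors uniquely and $\varphi$ is injective on $\ur j\,\ur{-k}$. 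It therefore suffices to know that the left hand side $\!\,^gY(e_{-k},\,v,\,a)$ itself lies in $\ur j\,\ur{-k}$.

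This last point is the main obstacle and the only genuinely laborious step. Decomposing $Y(e_{-k},\,v,\,a)$ into its elementary factors by Lemma~\ref{y-decomposition} and conjugating each factor by $g=X_{k,-j}(r\eps j)$, I would evaluate every conjugate using the relations KL2--KL6: the short-root factors $Y_{m,k}(\cdot)$ with $m\notin\{\pm k,\,\pm j\}$ split off a $Y_{j,-m}(\cdot)$-term (KL3, after applying the symmetries KL0 and S0), the factor with $m=j$ contributes a long-root term through a KL6-type computation, and the long-root factor $Y_{-k,k}(\cdot)$ contributes via the KL4/KL5 pattern. Collecting the resulting $\ur j$- and $\ur{-k}$-parts and commuting them past one another with the help of Lemma~\ref{heis} places $\!\,^gY(e_{-k},\,v,\,a)$ in $\ur j\,\ur{-k}$; indeed one may then read off directly that it equals the right hand side, so the injectivity step becomes a convenient shortcut rather than a logical necessity. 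The delicate part is the sign and form-parameter bookkeeping in these conjugations, in particular keeping track of the factors $\eps k$, $\eps{-k}=-\eps k$ and the quadratic corrections $ar^2$, which is exactly where the argument of Lemma~\ref{z-decomposition-for-y} (Lemma~12 of~\cite{L3}) is reused.
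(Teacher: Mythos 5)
Your reduction of the claim to the equality $\!\,^gY(e_{-k},\,v,\,a)=Y(e_j,\,vr\eps k,\,ar^2)\,Y(e_{-k},\,v+e_jar\eps k,\,a)$ and the computation of the common image under $\varphi$ are both fine, but the injectivity claim on which the whole strategy rests is false: $\varphi(\ur j)\cap\varphi(\ur{-k})\neq 1$. Indeed, for $j\notin\{\pm k\}$ the element $T(e_j,\,e_{-k}s,\,0)=T(e_{-k},\,e_js,\,0)$ (Lemma~\ref{esd-properties}, part c) lies in both images; at the level of generators this is just the common root subgroup $Y_{jk}(\cdot)=Y_{-k,-j}(\cdot)$ (relation KL0), which belongs to both $\ur j$ and $\ur{-k}$. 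Your test vector $e_{-j}$ does not detect this: equating $T(e_j,\,u,\,b)e_{-j}$ with $T(e_{-k},\,w,\,c)e_{-j}$ only forces $u_m=0$ for $m\notin\{j,-k\}$, $b=-2u_j$ and $u_{-k}=w_j$, leaving a whole family $T(e_j,\,e_{-k}s,\,0)$, $s\in I$, of common elements. Consequently the factorization of an element of $\ur j\,\ur{-k}$ is not unique and your injectivity argument breaks down as stated. It can be repaired --- one must identify the intersection explicitly and check that its $\varphi$-preimages inside $\ur j$ and inside $\ur{-k}$ coincide, which they do because both are forced to equal the corresponding $Y_{jk}(\cdot)$ --- but that is an additional argument you have not supplied.

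More importantly, even with the repaired injectivity, the step you defer --- showing that $\!\,^gY(e_{-k},\,v,\,a)$ lies in $\ur j\,\ur{-k}$ --- is not a routine membership check. It requires conjugating every elementary factor of $Y(e_{-k},\,v,\,a)$ by $X_{j,-k}(r\eps k)$ via KL2--KL6 and then reordering the output, and the reordering cannot be done ``with the help of Lemma~\ref{heis}'', which only controls commutators inside a single $\ur i$; one needs the specific commutation relations between the resulting $\ur j$- and $\ur{-k}$-factors, which hold only because of the hypotheses $v_{-j}=v_k=v_{-k}=0$. Carrying this out is exactly the paper's proof: it writes $Y(e_{-k},\,v,\,a)=yzw$ with $y$, $z$ the negative and positive short-root parts and $w$ the long-root part, expands $\llbracket x,\,yzw]=\llbracket x,\,y]\cdot\!\,^y\llbracket x,\,z]\cdot\!\,^{yz}\llbracket x,\,w]$, evaluates each piece by the defining relations, and reassembles the right-hand side directly, with no appeal to injectivity. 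So your proposal either collapses onto that computation (making the injectivity wrapper superfluous) or relies on the incorrect intersection claim; as written it does not constitute an independent proof.
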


\begin{proof}
Denote $x=X_{j,-k}(r\eps{k})=X(e_k,\,e_jr,\,0)$, $w=Y_{-k,k}(a-\lan v_-,\,v_+\ran)$, $y=Y_{-l,k}(v_{-l}\eps{-k})\cdot\ldots\cdot Y_{-1,k}(v_{-1}\eps{-k})$, $z=Y_{1,k}(v_1\eps{-k})\cdot\ldots\cdot Y_{l,k}(v_l\eps{-k})$. Then
$$
\llbracket X(e_k,\,e_jr,\,0),\,Y(e_{-k},\,v,\,a)]=\llbracket x,\,yzw]=\llbracket x,\,y]\cdot\!\,^y\llbracket x,\,z]\cdot\!\,^{yz}\llbracket x,\,w].
$$
Assume that $j<0$, for $j>0$ the proof looks exactly the same. For $h\not\in\{\pm j,\pm k\}$ one has
\begin{multline*}
\llbracket X_{j,-k}(r\eps k),\,Y_{h,k}(v_h\eps{-k})]=\llbracket X_{j,-k}(r\eps k),\,Y_{-k,-h}(v_h\eps{h})]=\\
=Y_{j,-h}(rv_h\eps k\eps h)=Y_{h,-j}(rv_h\eps k\eps j).
\end{multline*}
For $h=j$ one has 
$$
\llbracket X_{j,-k}(r\eps k),\,Y_{j,k}(v_j\eps{-k})]=\llbracket X_{j,-k}(r\eps k),\,Y_{-k,-j}(v_j\eps{j})]=Y_{j,-j}(2rv_j\eps k).
$$
Since any $Y_{h,-j}(\hat a)$ commutes with any $Y_{tk}(\hat b)$ for $h$, $t\not\in\{\pm k, -j\}$, $h$, $t<0$, we have
\begin{multline*}
\llbracket x,\,y]=\prod_{h<0}\llbracket X_{j,-k}(r\eps k),\,Y_{h,k}(v_h\eps{-k})]=\\=Y_{j,-j}(2rv_j\eps k)\prod_{\substack{h\neq j\\h<0}}Y_{h,-j}(rv_h\eps k\eps j)=Y(e_j,\,v_-r\eps k,\,0).
\end{multline*}
Similarly,
$$
\llbracket x,\,z]=\prod_{\substack{h>0}}Y_{h,-j}(rv_h\eps k\eps j)=Y(e_j,\,v_+r\eps k,\,0).
$$
Then,
\begin{multline*}
\!\,^y\llbracket x,\,z]=\!\,^{Y(e_{-k},\,v_-,\,0)}Y(e_j,\,v_+r\eps k,\,0)=\\
=Y(e_j,\,v_+r\eps k,\,0)[Y(e_j,\,-v_+r\eps k,\,0),\,Y(e_{-k},\,v_-,\,0)]=\\
=Y(e_j,\,v_+r\eps k,\,0)\llbracket X(e_j,\,-v_+r\eps k,\,0),\,Y(e_{-k},\,v_-,\,0)]=\\
=Y(e_j,\,v_+r\eps k,\,0)Y(e_{-k},\,e_j\lan v_-,\,v_+\ran r\eps k,\,0).
\end{multline*}
Next,
\begin{multline*}
\llbracket x,\,w]=\llbracket X_{j,-k}(r\eps{k}),\,Y_{-k,k}(a-\lan v_-,\,v_+\ran)]=\\
=\Big([Y_{-k,k}(a-\lan v_-,\,v_+\ran),\,X_{k,-j}(r\eps j)\rrbracket\Big)\inv=\\
=\Big(Y_{-k,-j}\big((a-\lan v_-,\,v_+\ran)r\eps j\eps{-k}\big)Y_{j,-j}\big(-(a-\lan v_-,\,v_+\ran)r^2\big)\Big)\inv=\\
=Y_{j,-j}(ar^2-\lan v_-r\eps k,\,v_+r\eps k\ran)Y(e_j,\,e_{-k}(a-\lan v_-,\,v_+\ran)r\eps{k},\,0).
\end{multline*}
Obviously, $\llbracket x,\,w]$ commutes with $y$ and $z$, thus, finally,
\begin{multline*}
\llbracket x,\,yzw]=Y_{j,-j}(ar^2-\lan v_-r\eps k,\,v_+r\eps k\ran)Y_{j,-j}(2rv_j\eps k)\cdot\\
\cdot\prod_{\substack{h\neq j\\h<0}}Y_{h,-j}(rv_h\eps k\eps j)\prod_{\substack{h>0}}Y_{h,-j}(rv_h\eps k\eps j)Y(e_{-k},\,e_j\lan v_-,\,v_+\ran r\eps k,\,0)\cdot\\
\cdot Y(e_{-k},\,e_{j}(a-\lan v_-,\,v_+\ran)r\eps{k},\,0)=Y(e_j,\,vr\eps k,\,ar^2)Y(e_{-k},\,e_jar\eps k,\,0).
\end{multline*}
\end{proof}

\begin{cl*}
For $j\not\in\{\pm k\}$, $v\in I^{2l}$ such that $v_{-j}=v_{k}=v_{-k}=0$, $r\in R$, $a$ such that $a-\lan v_-,\,v_+\ran\in\Gamma$, one has
$$
Y(e_j,\,vr,\,ar^2)=\llbracket X(e_k,\,e_jr,\,0),\,Y(e_{-k},\,v\eps k,\,a)]Y(e_{-k},\,e_jar\eps{-k},\,0).
$$
\end{cl*}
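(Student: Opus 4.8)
The plan is to read this statement off directly from Lemma~\ref{ppc} by a change of variable, so I expect no genuine difficulty here. Since $\eps k\in\{\pm1\}$ we have $\eps k^2=1$, and $v\eps k$ is just $\pm v$; in particular $v\eps k\in I^{2l}$ still, its $(-j)$-th, $k$-th and $(-k)$-th coordinates still vanish, and because $(v\eps k)_\pm=v_\pm\eps k$ and the form is bilinear one gets $\lan(v\eps k)_-,\,(v\eps k)_+\ran=\eps k^2\lan v_-,\,v_+\ran=\lan v_-,\,v_+\ran$. Hence the hypothesis $a-\lan v_-,\,v_+\ran\in\Gamma$ is unchanged and Lemma~\ref{ppc} applies verbatim to $v\eps k$.

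First I would substitute $v\eps k$ for $v$ in the conclusion of Lemma~\ref{ppc}, obtaining
$$
\llbracket X(e_k,\,e_jr,\,0),\,Y(e_{-k},\,v\eps k,\,a)]=Y(e_j,\,v\eps k\,r\eps k,\,ar^2)Y(e_{-k},\,e_jar\eps k,\,0),
$$
and then simplify the first argument on the right using $v\eps k\,r\eps k=vr\eps k^2=vr$. This already identifies the first factor on the right-hand side as the desired $Y(e_j,\,vr,\,ar^2)$.

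It then remains to move the trailing factor $Y(e_{-k},\,e_jar\eps k,\,0)$ across to the other side. I would invoke the corollary to Lemma~\ref{y-add}, which gives $Y(e_{-k},\,e_jar\eps k,\,0)\inv=Y(e_{-k},\,-e_jar\eps k,\,0)$, and note that $\eps{-k}=-\eps k$, so that $-e_jar\eps k=e_jar\eps{-k}$ and this inverse equals $Y(e_{-k},\,e_jar\eps{-k},\,0)$. Multiplying the displayed identity on the right by this element yields exactly the claimed formula. The only point requiring care is the sign bookkeeping distinguishing $\eps k$ from $\eps{-k}$, together with the routine check that every symbol $Y(\,\cdot\,)$ remains well-defined after the substitution; neither presents a real obstacle.
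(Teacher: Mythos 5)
Your proposal is correct and is exactly the intended derivation: the paper states this corollary without proof as an immediate consequence of Lemma~\ref{ppc}, and your substitution of $v\eps k$ for $v$, the simplification $\eps k^2=1$, and the transfer of the trailing factor via $Y(e_{-k},\,e_jar\eps k,\,0)\inv=Y(e_{-k},\,e_jar\eps{-k},\,0)$ is precisely the routine check the author leaves to the reader. The well-definedness verification that $a-\lan(v\eps k)_-,\,(v\eps k)_+\ran\in\Gamma$ is a nice touch and nothing is missing.
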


\begin{df}
For $u\in V$, $v\in I^{2l}$ such that $\lan u,\,v\ran=0$, $u_i=u_{-i}=v_i=v_{-i}=0$, $a\in R$ such that $a-\lan v_-,\,v_+\ran\in\Gamma$ denote
$$
Y_{(i)}(u,\,v,\,a)=\llbracket X(e_i,\,u,\,0),\,Y(e_{-i},\,v\eps{i},\,a)]Y(e_{-i},\,ua\eps{-i},\,0).
$$
\end{df}

\begin{rk}
Due to Lemma~\ref{z-decomposition} one has $\varphi\big(Y_{(i)}(u,\,v,\,a)\big)=T(u,\,v,\,a)$.
\end{rk}

\begin{rk}
For $v\in I^{2l}$ such that $v_{-j}=v_i=v_{-i}=0$, $a\in R$ such that $a-\lan v_-,\,v_+\ran\in\Gamma$, one has
$$
Y_{(i)}(e_j,\,v,\,a)=Y(e_j,\,v,\,a)
$$
by Lemma~\ref{ppc}. One can also obtain the following result.
\end{rk}

\begin{lm}
For $v\in V$ with $v_{-j}=v_i=v_{-i}=0$, $b\in I$ one has
$$
Y_{(i)}(v,\,e_jb,\,0)=Y(e_j,\,vb,\,0).
$$
\end{lm}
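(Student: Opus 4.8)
The plan is to prove the identity by showing that both sides lie in the relative unipotent radical $\ur j$ and have the same image under $\varphi$; since $\varphi$ is injective on $\ur j$ by Lemma~\ref{unirad}, this forces them to be equal.

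First I would simplify the left hand side. As the last argument equals $0$, the trailing factor in the definition of $Y_{(i)}$ is $Y(e_{-i},\,0,\,0)=1$, so
\[
Y_{(i)}(v,\,e_jb,\,0)=\llbracket X(e_i,\,v,\,0),\,Y(e_{-i},\,e_jb\eps i,\,0)];
\]
here $j\notin\{\pm i\}$ is forced by the requirement that $(e_jb)_i=(e_jb)_{-i}=0$. By the corollary to Lemma~\ref{y-decomposition} one computes $Y(e_{-i},\,e_jb\eps i,\,0)=Y_{j,i}(-b)$, and this element visibly lies in $\ur j$.

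The key step is to check that $X(e_i,\,v,\,0)\in\ps j$. Using the absolute case of Lemma~\ref{y-decomposition} I would write
\[
X(e_i,\,v,\,0)=X_{i,-i}(-\lan v_-,\,v_+\ran)\prod_{h\neq\pm i}X_{h,-i}(v_h\eps i).
\]
Every factor here has second index $-i\neq j$, and the hypothesis $v_{-j}=0$ removes the only factor whose first index could be $-j$ (equivalently, it excludes the generator $X_{i,j}$, which is the unique one not lying in $\ps j$). Hence each factor, and therefore the whole product $X(e_i,\,v,\,0)$, belongs to $\ps j$. Applying the Levi decomposition lemma with the index $j$ to $X(e_i,\,v,\,0)\in\ps j$ and $Y_{j,i}(-b)\in\ur j$ gives $\!\,^{X(e_i,v,0)}Y_{j,i}(-b)\in\ur j$, and consequently
\[
Y_{(i)}(v,\,e_jb,\,0)=\!\,^{X(e_i,v,0)}Y_{j,i}(-b)\cdot Y_{j,i}(-b)\inv\in\ur j.
\]

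It remains to compare images. The right hand side $Y(e_j,\,vb,\,0)$ lies in $\ur j$ by definition, and $\varphi\big(Y(e_j,\,vb,\,0)\big)=T(e_j,\,vb,\,0)$; on the other hand the remark following the definition of $Y_{(i)}(u,\,v,\,a)$ gives $\varphi\big(Y_{(i)}(v,\,e_jb,\,0)\big)=T(v,\,e_jb,\,0)$. Since $\lan v,\,e_j\ran=v_{-j}\eps{-j}=0$, part~(c) of Lemma~\ref{esd-properties} yields $T(v,\,e_jb,\,0)=T(e_j,\,vb,\,0)$, so the two images agree. Injectivity of $\varphi$ on $\ur j$ then finishes the proof. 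The only genuine obstacle is the membership $X(e_i,\,v,\,0)\in\ps j$, which rests entirely on the vanishing of $v_{-j}$; once it is established the rest is formal.
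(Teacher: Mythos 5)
Your proof is correct. A caveat on the comparison: the paper gives no argument of its own here, it only states that the proof of Lemma~15 of~\cite{L3} can be repeated verbatim, so a line-by-line match is impossible; judging from the neighbouring results that are proved in full (Lemma~\ref{ppc}, Lemma~\ref{z-correctness}), that imported argument is a direct commutator computation with the relative Steinberg relations. Your route is different and more economical. You first reduce the left-hand side to $\llbracket X(e_i,\,v,\,0),\,Y_{j,i}(-b)]$ (the identification $Y(e_{-i},\,e_jb\eps i,\,0)=Y_{j,i}(-b)$ and the vanishing of the trailing factor $Y(e_{-i},\,0,\,0)=1$ both check out), then show both sides lie in $\ur j$ --- the right-hand side by the very definition of $Y(e_j,\,\cdot\,,\,\cdot)$, the left-hand side because $X(e_i,\,v,\,0)\in\ps j$ together with the Levi decomposition lemma --- and finally invoke the injectivity of $\varphi$ on $\ur j$ (Lemma~\ref{unirad}) after matching the images $T(v,\,e_jb,\,0)=T(e_j,\,vb,\,0)$ via Lemma~\ref{esd-properties}(c). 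You correctly isolate the one substantive point: in the decomposition $X(e_i,\,v,\,0)=X_{i,-i}(-\lan v_-,\,v_+\ran)\prod_{h\neq\pm i}X_{h,-i}(v_h\eps i)$ the only factor outside $\ps j$ is the one with $h=-j$, and it is killed precisely by the hypothesis $v_{-j}=0$. This injectivity-on-the-unipotent-radical device is the same one the paper uses for Lemma~\ref{y-add} and Lemma~\ref{y-conjugated-by-ps}, so your argument is entirely in the spirit of the text; what it buys is a self-contained proof that does not require importing the computation from~\cite{L3}.
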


\begin{proof}
The proof of Lemma~15 from the Another presentation paper can be repeated verbatim.
\end{proof}

\begin{lm}
\label{z-correctness}
Consider $u\in V$, $v\in I^{2l}$ such that $\lan u,\,v\ran=0$, $u_i=u_{-i}=u_{j}=u_{-j}=0$, $v_i=v_{-i}=v_j=v_{-j}=0$, $r\in R$ and $a\in R$ such that $a-\lan v_-,\,v_+\ran\in\Gamma$. Then one has
$$
Y_{(i)}(u,\,vr,\,ar^2)=Y_{(j)}(ur,\,v,\,a).
$$
\end{lm}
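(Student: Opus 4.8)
The plan is first to check the statement at the level of $\Sp_{2l}(R)$ and then to lift it. Applying $\varphi$ to both sides and using the Remark after the definition of $Y_{(i)}$, one gets $\varphi\big(Y_{(i)}(u,\,vr,\,ar^2)\big)=T(u,\,vr,\,ar^2)$ and $\varphi\big(Y_{(j)}(ur,\,v,\,a)\big)=T(ur,\,v,\,a)$. Substituting into the defining formula for ESD-transformations and using commutativity of $R$ shows $T(u,\,vr,\,ar^2)=T(ur,\,v,\,a)$ (the evident generalisation of part (c) of Lemma~\ref{esd-properties}). Before anything else I would record that both sides are well defined: since $a-\lan v_-,\,v_+\ran\in\Gamma$, the equality $ar^2-\lan(vr)_-,\,(vr)_+\ran=r^2(a-\lan v_-,\,v_+\ran)$ together with axiom (c) of a form parameter gives $ar^2-\lan(vr)_-,\,(vr)_+\ran\in\Gamma$, while the condition for the right-hand side is the hypothesis itself. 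Thus both sides have the same image under $\varphi$; since $\varphi$ is far from injective, the real work is to promote this to an equality in $\St\!\Sp_{2l}(R,\,I,\,\Gamma)$.

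The main idea is to reduce to the two base cases already established, namely $Y_{(i)}(e_p,\,w,\,c)=Y(e_p,\,w,\,c)$ (the Remark) and $Y_{(i)}(w,\,e_q b,\,0)=Y(e_q,\,wb,\,0)$ (the preceding Lemma), together with Lemma~\ref{switch}. For basis arguments the identity is immediate: taking $u=e_p$, $v=e_q c$ and $a=0$ one computes $Y_{(i)}(e_p,\,e_q cr,\,0)=Y(e_q,\,e_p cr,\,0)$ and $Y_{(j)}(e_p r,\,e_q c,\,0)=Y(e_q,\,e_p rc,\,0)$, and these agree by commutativity of $R$. To pass to general $u$ and $v$ I would decompose $u=\sum_p e_p u_p$ and $v=\sum_q e_q v_q$ over indices avoiding $\pm i$, $\pm j$, and expand the defining commutator $\llbracket X(e_i,\,u,\,0),\,Y(e_{-i},\,vr\eps i,\,ar^2)]$ by the cocycle identity $\llbracket gh,\,y]=\!\,^g\llbracket h,\,y]\cdot\llbracket g,\,y]$ applied to the factorisation of $X(e_i,\,u,\,0)$ into elementary transvections, resolving each elementary commutator by the relations KL0--KL7 and by Lemmas~\ref{z-decomposition-for-y} and~\ref{ppc} and their corollaries. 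Carrying out the same expansion for $Y_{(j)}(ur,\,v,\,a)$ and comparing term by term should give the result, the matching of the short-root contributions being governed precisely by $u_p r\cdot v_q=u_p\cdot v_q r$.

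The hard part will be the bookkeeping of the long-root contributions and the form-parameter corrections. The $\Gamma$-valued quantity $a-\lan v_-,\,v_+\ran$ enters the $i$- and $j$-expansions with different $r$-weights ($r^2$ on the left, none on the right), and the cross terms coming from pairs $(p,\,q)$ with $p=-q$ produce long-root generators $Y_{\ast,-\ast}$ whose arguments must be reconciled; this is exactly where the $ar^2$ versus $a$ asymmetry in the statement is forced. I would organise this by splitting $a=\lan v_-,\,v_+\ran+\alpha$ with $\alpha\in\Gamma$, treating the geometric part $\lan v_-,\,v_+\ran$ through the corollary of Lemma~\ref{ppc} (which tracks how the $r$ and $r^2$ factors propagate) and the purely relative part $\alpha$ through additivity in the third argument, i.e. Lemma~\ref{y-add}. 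The computation runs parallel to the corresponding lemma of~\cite{L3}, the only genuinely new ingredient being the systematic use of the form-parameter axioms to keep every intermediate $Y(e_\ast,\,\cdot,\,\cdot)$ well defined.
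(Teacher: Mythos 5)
Your proposal is a plan rather than a proof: the step you yourself flag as ``the hard part'' --- reconciling the long-root contributions and the $\Gamma$-corrections after expanding both sides coordinatewise --- is precisely where all the content of the lemma sits, and you do not carry it out. Worse, the expansion you propose is not available at this point in the paper. Writing $X(e_i,\,u,\,0)$ as a product of elementary factors and applying $\llbracket gh,\,y]=\!\,^g\llbracket h,\,y]\cdot\llbracket g,\,y]$ produces conjugated commutators $\!\,^{X(e_i,\,e_pu_p,\,0)}\llbracket X(e_i,\,\cdot,\,0),\,q]$, so your ``term-by-term'' comparison secretly requires an additivity statement in the \emph{first} argument of $Y_{(i)}$. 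That statement is Lemma~\ref{short-is-three-long}, which is proved much later, carries a nontrivial correction factor $Y(v,\,ua,\,0)$, and in the paper's logical order depends on the present lemma (via Lemma~\ref{w-correctness}). So the reduction to the two base cases $Y_{(i)}(e_p,\,w,\,c)=Y(e_p,\,w,\,c)$ and $Y_{(i)}(w,\,e_qb,\,0)=Y(e_q,\,wb,\,0)$ does not go through as described without either proving first-argument additivity from scratch or introducing a circularity. The observation that both sides have the same image under $\varphi$ is correct but, as you note, carries no weight.

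The paper's actual argument avoids coordinates entirely. It uses Lemma~\ref{ppc} to rewrite $q=Y(e_{-i},\,vr\eps i,\,ar^2)$ as $\llbracket y\inv,\,z]w$ with $y=X(e_j,\,e_{-i}r\eps{-i},\,0)$, $z=Y(e_{-j},\,v\eps j,\,a)$, $w=Y(e_{-j},\,e_{-i}ar\eps i\eps{-j},\,0)$ --- that is, it trades the scalar $r$ on $v$ for a Weyl-type elementary factor connecting the $i$-th and $j$-th hyperbolic pairs --- and then applies the Hall--Witt identity to $\llbracket x,\,\llbracket y\inv,\,z]]$, using $[x\inv,\,y]=X(e_j,\,ur,\,0)$ to transport $r$ onto $u$. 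All residual factors are then matched by short computations with Lemma~\ref{y-conjugated-by-ps}. If you want to salvage your approach you would need to first establish additivity of $Y_{(i)}(u,\,v,\,a)$ in $u$ with its correction terms independently of this lemma, which is substantially more work than the Hall--Witt route.
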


\begin{proof}
Denote $x=X(e_i,\,u,\,0)$, $q=Y(e_{-i},\,vr\eps i,\,ar^2)$, $c=Y(e_{-i},\,uar^2\eps{-i},\,0)$, then by the very definition
$$
Y_{(i)}(u,\,vr,\,ar^2)=\llbracket x,\,q]c.
$$
Denote $y=X(e_j,\,e_{-i}r\eps{-i},\,0)$, $z=Y(e_{-j},\,v\eps j,\,a)$, $w=Y(e_{-j},\,e_{-i}ar\eps i\eps{-j},\,0)$, then by Lemma~\ref{ppc}
$$
q=\llbracket y\inv,\,z]w
$$
and
$$
\llbracket x,\,q]=\llbracket x,\,\llbracket y\inv,\,z]w]=\llbracket x,\,\llbracket y\inv,\,z]]\cdot\!\,^{\llbracket y\inv,\,z]}\llbracket x,\,w].
$$
Using Hall--Witt identity, 
$$
\llbracket x,\,\llbracket y\inv,\,z]]=\!\,^{y\inv x}\llbracket[x\inv,\,y],\,z]\cdot\!\,^{y\inv z}[[z\inv,\,x\rrbracket,\,y\rrbracket.
$$
One can check using Lemma~\ref{y-conjugated-by-ps} that $x$ acts trivially on $z$, i.e., $[z\inv,\,x\rrbracket=1$, thus,
$$
\llbracket x,\,\llbracket y\inv,\,z]]=\!\,^{y\inv x}\llbracket[x\inv,\,y],\,z].
$$
Next, 
$$
[x\inv,\,y]=[X(e_i,\,-u,\,0),\,X(e_j,\,-e_{-i}r\eps i,\,0)]=X(-u,\,-e_jr,\,0)=X(e_j,\,ur,\,0).
$$
Denote $d=[x\inv,\,y]=X(e_j,\,ur,\,0)$, one can show that $[x,\,d]=1$, thus,
$$
\llbracket x,\,\llbracket y\inv,\,z]]=\!\,^{y\inv}\llbracket d,\,z].
$$
Observe that $y$ acts trivially on $q$, thus, $\!\,^y\llbracket x,\,q]=\llbracket yx,\,q]$. Since $yx=xy[y\inv,\,x\inv]$ and 
$$
[y\inv,\,x\inv]=[X(e_{-i},\,e_jr\eps i,\,0),\,X(e_i,\,-u,\,0)]=X(e_j,\,ur,\,0)
$$
also acts trivially on $q$, one has $\!\,^y\llbracket x,\,q]=\llbracket x,\,q]$, thus,
$$
\llbracket x,\,q]=\!\,^y\llbracket x,\,q]=\!\,^y\llbracket x,\,\llbracket y\inv,\,z]]\cdot\!\,^{y\llbracket y\inv,\,z]}\llbracket x,\,w]=\llbracket d,\,z]\cdot\!\,^{y\llbracket y\inv,\,z]}\llbracket x,\,w].
$$
Denote 
$$h=\llbracket x,\,w]=\llbracket X(e_i,\,u,\,0),\,Y(e_{-i},\,e_{-j}ar\eps i\eps{-j},\,0)]=Y(e_{-j},\,uar\eps{-j},\,0),$$
then $[w,\,h]=1$, thus, $\!\,^{\llbracket y\inv,\,z]}h=\!\,^{\llbracket y\inv,\,z]w}h=\!\,^{q}h=h$, so,
$$
\llbracket x,\,q]=\llbracket d,\,z]\cdot\!\,^{y}h=\llbracket d,\,z]\cdot h\cdot[h\inv,\,y\rrbracket.
$$
Since
$$
\llbracket y,\,h\inv]=\llbracket X(e_j,\,-e_{-i}r\eps i,\,0),\,Y(e_{-j},\,uar\eps j,\,0)]=Y(e_{-i},\,uar^2\eps{-i},\,0)=c,
$$
one has
$$
\llbracket x,\,q]c=\llbracket d,\,z]h
$$
or,
$$
Y_{(i)}(u,\,vr,\,ar^2)=Y_{(j)}(ur,\,v,\,a).
$$
\end{proof}

\begin{df}
Define the ({\it absolute}) {\it Levi subgroup} $\ls i=\ps i\cap\ps{-i}$.
\end{df}

\begin{rk}
Observe that for $g\in\ls i$ one has $\phi(g)e_i=e_i$ and $\phi(g)e_{-i}=e_{-i}$. Indeed, the first equality holds for $g=X_{kh}(a)$ with $\{-k,h\}\not\ni i$ and the second one for $g=X_{kh}(a)$ with $\{-k,h\}\not\ni -i$.
\end{rk}

\begin{lm}
For $u\in V$, $v\in I^{2l}$ such that $\lan u,\,v\ran=0$, $u_i=u_{-i}=v_i=v_{-i}=0$, $a\in R$ such that $a-\lan v_-,\,v_+\ran\in\Gamma$, $g\in\ls i$, one has
$$
g\,Y_{(i)}(u,\,v,\,a)g\inv=Y_{(i)}(\phi(g)u,\,\phi(g)v,\,a).
$$
\end{lm}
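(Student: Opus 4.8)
The plan is to compute directly, interpreting $g\,Y_{(i)}(u,\,v,\,a)g\inv$ as the action $\!\,^gY_{(i)}(u,\,v,\,a)$ and pushing $g$ through the defining expression
$$
Y_{(i)}(u,\,v,\,a)=\llbracket X(e_i,\,u,\,0),\,Y(e_{-i},\,v\eps{i},\,a)]\,Y(e_{-i},\,ua\eps{-i},\,0).
$$
Since $\!\,^g$ is an automorphism of $\St\!\Sp_{2l}(R,\,I,\,\Gamma)$, it distributes over products and inverses, so it suffices to understand how $g$ acts on each of the three building blocks $\!\,^{X(e_i,u,0)}Y(e_{-i},\,v\eps i,\,a)$, $Y(e_{-i},\,v\eps i,\,a)\inv$ and $Y(e_{-i},\,ua\eps{-i},\,0)$.

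First I would record the two consequences of $g\in\ls i=\ps i\cap\ps{-i}$ that drive everything. On the one hand $g\in\ps{-i}$, so Lemma~\ref{y-conjugated-by-ps} applied with $-i$ in place of $i$ gives $\!\,^gY(e_{-i},\,w,\,c)=Y(e_{-i},\,\phi(g)w,\,c)$ for any admissible $w$, $c$; using $\phi(g)(v\eps i)=(\phi(g)v)\eps i$ and $\phi(g)(ua\eps{-i})=(\phi(g)u)a\eps{-i}$ this immediately transforms the last two blocks. On the other hand $g\in\ps i$, and I would establish the absolute conjugation identity
$$
g\,X(e_i,\,u,\,0)\,g\inv=X(e_i,\,\phi(g)u,\,0)
$$
by the same argument as Lemma~\ref{y-conjugated-by-ps}: both sides lie in the (absolute) unipotent radical, conjugation by $g\in\ps i$ preserves it by the Levi decomposition, the projection is injective on the unipotent radical as in Lemma~\ref{unirad}, and the images agree since $\phi\big(g\,X(e_i,u,0)g\inv\big)=T(e_i,\,\phi(g)u,\,0)$ by Lemma~\ref{esd-properties}(d) together with $\phi(g)e_i=e_i$.

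For the first block I would use that the action is a left action, so
$$
\!\,^g\big(\!\,^{X(e_i,u,0)}Y(e_{-i},\,v\eps i,\,a)\big)=\!\,^{g\,X(e_i,u,0)}Y(e_{-i},\,v\eps i,\,a)=\!\,^{g X(e_i,u,0)g\inv}\big(\!\,^gY(e_{-i},\,v\eps i,\,a)\big),
$$
and then substitute the two computations above, turning it into $\!\,^{X(e_i,\phi(g)u,0)}Y(e_{-i},\,(\phi(g)v)\eps i,\,a)$. Collecting the three transformed blocks reassembles precisely the defining expression for $Y_{(i)}(\phi(g)u,\,\phi(g)v,\,a)$, which is the claim. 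Before concluding I would verify that the target is well defined: $\lan\phi(g)u,\,\phi(g)v\ran=\lan u,\,v\ran=0$ since $\phi(g)\in\Sp(V)$; the conditions $(\phi(g)u)_{\pm i}=(\phi(g)v)_{\pm i}=0$ follow from $u_{\pm i}=v_{\pm i}=0$ and $\phi(g)e_{\pm i}=e_{\pm i}$ (recalled above for $g\in\ls i$); and $a-\lan(\phi(g)v)_-,\,(\phi(g)v)_+\ran\in\Gamma$ follows from $a-\lan v_-,\,v_+\ran\in\Gamma$ by Lemma~\ref{wow}, as $\phi(g)\in\Ep_{2l}(R)$.

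The manipulation is purely formal once these inputs are in place; the single load-bearing step is the absolute conjugation identity $g\,X(e_i,u,0)g\inv=X(e_i,\phi(g)u,0)$, since Lemma~\ref{y-conjugated-by-ps} as stated concerns the action of $g$ on relative generators rather than honest conjugation of the absolute element $X(e_i,u,0)$ inside $\St\!\Sp_{2l}(R)$. I expect this to be the point deserving the most care, but it is exactly the absolute specialisation of the already available argument, so no genuinely new idea is required.
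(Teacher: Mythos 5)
Your proposal is correct and follows essentially the same route as the paper, which simply defers to Lemma~17 of the \emph{Another presentation} paper: one conjugates the defining expression $\llbracket X(e_i,\,u,\,0),\,Y(e_{-i},\,v\eps{i},\,a)]\,Y(e_{-i},\,ua\eps{-i},\,0)$ factor by factor, using Lemma~\ref{y-conjugated-by-ps} for $g\in\ps{-i}$ on the relative factors and its absolute specialisation for $g\in\ps i$ on $X(e_i,\,u,\,0)$, together with $\phi(g)e_{\pm i}=e_{\pm i}$ and Lemma~\ref{wow} for well-definedness. You correctly isolate the one load-bearing input, the absolute conjugation identity $g\,X(e_i,\,u,\,0)\,g\inv=X(e_i,\,\phi(g)u,\,0)$, and justify it by the same injectivity-on-the-unipotent-radical argument the paper uses throughout.
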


\begin{rk}
Since $\lan\phi(g)u,\,e_i\ran=\lan\phi(g)u,\,\phi(g)e_i\ran=\lan u,\,e_i\ran=0$, one can see that $(\phi(g)u)_{-i}=0$ and similarly $(\phi(g)u)_{i}=(\phi(g)v)_{-i}=(\phi(g)v)_{i}=0$. Using Lemma~\ref{wow} one obtains, that $a-\lan(\phi(g)v)_-,\,(\phi(g)v)_+\ran\in\Gamma$. Thus, $Y_{(i)}(\phi(g)u,\,\phi(g)v,\,a)$ is well-defined.
\end{rk}

\begin{proof}
The proof of Lemma~17 from the Another presentation paper works for this situation as well.
\end{proof}

\begin{rk}
Lemma~\ref{unipotent-decomposition} implies that for $v$ such that $v_{-i}=v_j=v_{-j}=0$, $j\not\in\{\pm i\}$ one has $Y(e_i,\,v,\,a)\in\ls j$. 
\end{rk}

\begin{rk}
For $w$ orthogonal to both $u$ and $v$, $\lan u,\,w\ran=\lan v,\,w\ran=0$, one has $T(u,\,v,\,a)w=w$. Below, in the computations this fact is frequently used without any special reference. 
\end{rk}

\begin{lm}
\label{z-additivity}
For $j\not\in\{\pm i\}$ and $u\in V$ such that $u_i=u_{-i}=u_j=u_{-j}=0$, $v\in I^{2l}$ such that $v_i=v_{-i}=0$ and $\lan u,\,v\ran=0$, and for $a$, $b\in I$ such that $a-\lan v_-,\,v_+\ran\in\Gamma$, one has
$$
Y_{(i)}(u,\,v,\,a)Y_{(i)}(u,\,e_jb,\,0)=Y_{(i)}(u,\,v+e_jb,\,a+v_{-j}b\eps{-j}).
$$
\end{lm}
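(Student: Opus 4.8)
The plan is to reduce the identity to a single conjugation relation living in $\ur{-i}$ and $\ur j$, and then to establish that relation by a generator-level computation with the relations KL0--KL7, in the spirit of the proof of Lemma~\ref{ppc}.

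First I would check that the right-hand side is well defined. The conditions $(v+e_jb)_i=(v+e_jb)_{-i}=0$ and $\lan u,\,v+e_jb\ran=0$ are immediate from $j\neq\pm i$ and $u_{-j}=0$. For the $\Gamma$-condition I would compute $\lan(v+e_jb)_-,\,(v+e_jb)_+\ran$: for $j>0$ it equals $\lan v_-,\,v_+\ran+v_{-j}b\eps{-j}$, so that $(a+v_{-j}b\eps{-j})-\lan(v+e_jb)_-,\,(v+e_jb)_+\ran=a-\lan v_-,\,v_+\ran\in\Gamma$; for $j<0$ the same difference comes out as $a-\lan v_-,\,v_+\ran-2\eps{j}v_{-j}b$, which still lies in $\Gamma$ because $2I\subseteq\Gamma$. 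Thus the well-definedness genuinely uses the first axiom of a form parameter.

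Next I would set $x=X(e_i,\,u,\,0)$, $A=Y(e_{-i},\,v\eps i,\,a)$, $B=Y(e_{-i},\,e_jb\eps i,\,0)$, $P=Y(e_{-i},\,ua\eps{-i},\,0)$ and $Q=Y(e_{-i},\,u(a+v_{-j}b\eps{-j})\eps{-i},\,0)$. By the preceding lemma $Y_{(i)}(u,\,e_jb,\,0)=\llbracket x,\,B]=Y(e_j,\,ub,\,0)=:D$. Lemma~\ref{y-add} gives $AB=Y(e_{-i},\,(v+e_jb)\eps i,\,a+v_{-j}b\eps{-j})$, since $\lan v\eps i,\,e_jb\eps i\ran=v_{-j}b\eps{-j}$, and this is exactly the middle factor in the definition of $Y_{(i)}(u,\,v+e_jb,\,a+v_{-j}b\eps{-j})=\llbracket x,\,AB]\,Q$. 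Using the definition of $Y_{(i)}$ together with the commutator identity $\llbracket x,\,AB]=\llbracket x,\,A]\cdot{}^A\llbracket x,\,B]$ and cancelling the common factor $\llbracket x,\,A]$, the whole statement reduces to the single relation
$$P\,D={}^A D\cdot Q.$$

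The heart of the proof is the computation of ${}^A D$. Here I would first pass from relative conjugation to the absolute action: writing $A$ as a product of relative generators via Lemma~\ref{y-decomposition} and applying KL7 to each factor of $D$ (both conjugation and action are homomorphisms, and they agree on every relative generator), one gets ${}^A D={}^{\tilde A}D$, where $\tilde A=X(e_{-i},\,v\eps i,\,a)\in\ps{-i}$ is the corresponding absolute element. I would then decompose $\tilde A$ by Lemma~\ref{y-decomposition}; all of its factors except $X_{-j,i}(\cdot)$ lie in $\ps j$ and are handled by Lemma~\ref{y-conjugated-by-ps}. Since the total $\phi$-image of $\tilde A$ fixes the vector $ub$ but sends $e_j$ to $e_j-e_{-i}\eps i\eps j v_{-j}$, the entire correction comes from the exceptional factor $X_{-j,i}(\cdot)$ (present exactly when $v_{-j}\neq0$) moving $e_j$ in the $e_{-i}$-direction. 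Collecting the resulting elementary terms, all of which have first index $-i$ and hence lie in $\ur{-i}$, should reassemble into $P\,D\,Q\inv$.

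The main obstacle is precisely this last bookkeeping. The conversion ${}^A D={}^{\tilde A}D$ is clean, but matching the elementary correction terms to $P\,D\,Q\inv$ requires careful control of the signs $\eps{\pm j}$ and of the long-root components, whose membership in $\Gamma$ again rests on $2I\subseteq\Gamma$. This is the step analogous to the explicit computation in Lemma~\ref{ppc}, and it is where essentially all of the work lies.
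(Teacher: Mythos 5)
Your reduction is correct and coincides with the first half of the paper's argument: expanding the right-hand side, splitting $Y(e_{-i},(v+e_jb)\eps i,a+v_{-j}b\eps{-j})=AB$ via Lemma~\ref{y-add}, applying $\llbracket x,\,AB]=\llbracket x,\,A]\cdot{}^A\llbracket x,\,B]$, and identifying $\llbracket x,\,B]=Y(e_j,\,ub,\,0)=D$ leaves exactly the relation $P\,D={}^AD\cdot Q$ to be proved. Your well-definedness check (including the $2I\subseteq\Gamma$ case for $j<0$) is also fine. The problem is the step you yourself flag as ``where essentially all of the work lies'': you never actually compute ${}^AD$. Your plan --- decompose $\tilde A=X(e_{-i},\,v\eps i,\,0)\cdots$ into elementary factors, isolate the one factor $X_{-j,i}(\cdot)$ outside $\ps j$, and collect the resulting $\ur{-i}$-corrections --- is workable in principle, but as written it is a declaration of intent rather than a proof, and carrying it out forces you to track the order of the factors in the decomposition, the action of the remaining $\ps j$-factors on the correction terms, and a long-root contribution coming from $Y_{j,-j}(-b^2\lan u_-,\,u_+\ran)$ inside $D$.

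The paper avoids all of this with a single reversal that you missed: instead of conjugating $D$ by $A$, write ${}^AD=D\cdot[D\inv,\,A]$ and observe that $D\inv=Y(e_j,\,-ub,\,0)$ corresponds via KL7 to $X(e_j,\,-ub,\,0)$, which \emph{does} lie in $\ps{-i}$ (whereas $A$ does not lie in $\ps j$ in general). One application of Lemma~\ref{y-conjugated-by-ps} then gives
$$[D\inv,\,A]=Y(e_{-i},\,T(e_j,\,-ub,\,0)v\eps i,\,a)\,Y(e_{-i},\,-v\eps i,\,-a)=Y(e_{-i},\,-ubv_{-j}\eps i\eps j,\,0),$$
and this single correction merges with $Q$ into $P$ by Lemma~\ref{y-add}; the final commutation $[D,\,P]=1$ is again immediate from $X(e_j,\,ub,\,0)\in\ps{-i}$. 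So the missing idea is precisely this asymmetry between $A$ and $D$: one of the two lies in the parabolic normalizing the unipotent radical containing the other, and flipping the commutator lets you exploit it. Until you either adopt that trick or actually carry out your factor-by-factor bookkeeping, the proof is incomplete.
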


\begin{proof}
Start with the right-hand side (one can check that it is well-defined)
\begin{multline*}
Y_{(i)}(u,\,v+e_jb,\,a+v_{-j}b\eps j)=\\
=\llbracket X(e_i,\,u,\,0),\,Y(e_{-i},\,(v+e_jb)\eps{i},\,a+v_{-j}b\eps j)]Y(e_{-i},\,u(a+v_{-j}b\eps{-j})\eps{-i},\,0).
\end{multline*}
Decompose $Y(e_{-i},\,(v+e_jb)\eps{i},\,a+v_{-j}b\eps j)$ inside the commutator and use the familiar identity $\llbracket a,\,bc]=\llbracket a,\,b]\cdot\,^b\llbracket a,\,c]$ to obtain
\begin{multline*}
\llbracket X(e_i,\,u,\,0),\,Y(e_{-i},\,(v+e_jb)\eps{i},\,a+v_{-j}b\eps j)]=\\
=\llbracket X(e_i,\,u,\,0),\,Y(e_{-i},\,v\eps{i},\,a)Y(e_{-i},\,e_jb\eps{i},\,0)]=\\
=\llbracket X(e_i,\,u,\,0),\,Y(e_{-i},\,v\eps{i},\,a)]\cdot\,^{Y(e_{-i},\,v\eps{i},\,a)}\llbracket X(e_i,\,u,\,0),\,Y(e_{-i},\,e_jb\eps{i},\,0)]=\\
=\llbracket X(e_i,\,u,\,0),\,Y(e_{-i},\,v\eps{i},\,a)]\cdot\,^{Y(e_{-i},\,v\eps{i},\,a)}Y_{(i)}(u,\,e_jb,\,0)=\\
=\llbracket X(e_i,\,u,\,0),\,Y(e_{-i},\,v\eps{i},\,a)]\cdot\,^{Y(e_{-i},\,v\eps{i},\,a)}Y(e_j,\,ub,\,0).
\end{multline*}
Observe that in general $X(e_{-i},\,v\eps{i},\,a)$ does not lie in $\ps j$, but $X(e_j,\,ub,\,0)$ always lies in $\ps i$. So that we can compute the conjugate as follows
\begin{multline*}
\!\,^{Y(e_{-i},\,v\eps{i},\,a)}Y(e_j,\,ub,\,0)=Y(e_j,\,ub,\,0)[Y(e_j,\,-ub,\,0),\,Y(e_{-i},\,v\eps{i},\,a)]=\\
=Y(e_j,\,ub,\,0)Y(e_{-i},\,T(e_j,\,-ub,\,0)v\eps{i},\,a)\cdot Y(e_{-i},\,-v\eps{i},\,-a)=\\
=Y(e_j,\,ub,\,0)Y(e_{-i},\,v\eps{i}-ub\lan e_j,\,v\eps i\ran,\,a)Y(e_{-i},\,-v\eps{i},\,-a)=\\
=Y(e_j,\,ub,\,0)Y(e_{-i},\,-ubv_{-j}\eps i\eps j,\,0).
\end{multline*}
Thus,
\begin{multline*}
Y_{(i)}(u,\,v+e_jb,\,a+v_{-j}b\eps j)=\llbracket X(e_i,\,u,\,0),\,Y(e_{-i},\,v\eps{i},\,a)]Y(e_j,\,ub,\,0)\cdot\\
\cdot Y(e_{-i},\,-ubv_{-j}\eps i\eps j,\,0)Y(e_{-i},\,u(a+v_{-j}b\eps{-j})\eps{-i},\,0)=\\
=\llbracket X(e_i,\,u,\,0),\,Y(e_{-i},\,v\eps{i},\,a)]Y(e_j,\,ub,\,0)Y(e_{-i},\,ua\eps{-i},\,0).
\end{multline*}
Finally, it remains to observe that $X(e_j,\,ub,\,0)\in\ps{-i}$ and thus one has $[Y(e_j,\,ub,\,0),\,Y(e_{-i},\,ua\eps{-i},\,0)]=1$ and
\begin{multline*}
Y_{(i)}(u,\,v+e_jb,\,a+v_{-j}b\eps j)=\llbracket X(e_i,\,u,\,0),\,Y(e_{-i},\,v\eps{i},\,a)]Y(e_{-i},\,ua\eps{-i},\,0)\cdot\\
\cdot Y(e_j,\,ub,\,0)=Y_{(i)}(u,\,v,\,a)Y_{(i)}(u,\,e_jb,\,0).
\end{multline*}
\end{proof}

\begin{df}
For $u\in V$ such that $u_i=u_{-i}=0$, $v\in I^{2l}$ such that $\lan u,\,v\ran=0$, and $a\in R$ such that $a-\lan v_-,\,v_+\ran\in\Gamma$ define
\begin{multline*}
Y_{(i)}(u,\,v,\,a)=\\=Y_{(i)}(u,v-e_iv_i-e_{-i}v_{-i},a-v_iv_{-i}\eps i)Y(e_i,\,uv_i,\,0)Y(e_{-i},\,uv_{-i},\,0).
\end{multline*}
\end{df}

\begin{rk}
Observe that the above definition coincides with the old one for $v$ with $v_i=v_{-i}=0$, so that we can use the same notation for the generator. Observe also that the right-hand side is well-defined.
\end{rk}

\begin{lm}
\label{w-conjugation}
For $g\in\ls i$, $u\in V$ such that $u_i=u_{-i}=0$, $v\in I^{2l}$ such that $\lan u,\,v\ran=0$, and $a\in R$ such that $a-\lan v_-,\,v_+\ran\in\Gamma$ one has
$$
g\,Y_{(i)}(u,\,v,\,a)g\inv=Y_{(i)}(\phi(g)u,\,\phi(g)v,\,a).
$$
\end{lm}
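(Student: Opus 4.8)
The plan is to reduce the general statement to the special case $v_i = v_{-i} = 0$ settled in the preceding lemma. Writing $v' = v - e_i v_i - e_{-i} v_{-i}$ and $a' = a - v_i v_{-i}\eps i$, the extended definition gives
$$Y_{(i)}(u,\,v,\,a) = Y_{(i)}(u,\,v',\,a')\,Y(e_i,\,uv_i,\,0)\,Y(e_{-i},\,uv_{-i},\,0),$$
and since conjugation by $g$ is an automorphism it suffices to conjugate each of the three factors separately and reassemble.

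First I would record the geometric input. By the remark preceding this lemma, $g \in \ls i$ forces $\phi(g)e_i = e_i$ and $\phi(g)e_{-i} = e_{-i}$; from $\lan \phi(g)u,\,e_{\pm i}\ran = \lan u,\,e_{\pm i}\ran = 0$ one then gets $(\phi(g)u)_i = (\phi(g)u)_{-i} = 0$. The same computation applied to $v$ yields $(\phi(g)v)_i = v_i$ and $(\phi(g)v)_{-i} = v_{-i}$, so $\phi(g)$ leaves the $i$- and $(-i)$-components of $v$ unchanged. As $\phi(g)$ fixes $e_{\pm i}$, it also commutes with the deletion of these components, $\phi(g)v' = \phi(g)v - e_i v_i - e_{-i}v_{-i} = (\phi(g)v)'$, while $a' = a - v_i v_{-i}\eps i = a - (\phi(g)v)_i(\phi(g)v)_{-i}\eps i$ is unchanged as well.

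Then I would conjugate the three factors. For the first I apply the preceding lemma to $Y_{(i)}(u,\,v',\,a')$, which is legitimate because $u_i = u_{-i} = 0$ and $v'_i = v'_{-i} = 0$, giving $g\,Y_{(i)}(u,\,v',\,a')g\inv = Y_{(i)}(\phi(g)u,\,\phi(g)v',\,a')$. For the other two I invoke Lemma~\ref{y-conjugated-by-ps}, applicable since $g \in \ps i$ and $g \in \ps{-i}$ (recall $\ls i = \ps i \cap \ps{-i}$) and since $(uv_i)_{-i} = (uv_{-i})_i = 0$; by $R$-linearity of $\phi(g)$ this gives $g\,Y(e_i,\,uv_i,\,0)g\inv = Y(e_i,\,(\phi(g)u)v_i,\,0)$ and $g\,Y(e_{-i},\,uv_{-i},\,0)g\inv = Y(e_{-i},\,(\phi(g)u)v_{-i},\,0)$. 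Multiplying the three results and reading them backwards through the extended definition --- with vector $\phi(g)u$, truncated vector $(\phi(g)v)' = \phi(g)v'$, and the scalar $a'$ matching by the previous paragraph --- produces exactly $Y_{(i)}(\phi(g)u,\,\phi(g)v,\,a)$.

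I do not expect a genuine obstacle; the work is bookkeeping of the $i$- and $(-i)$-components. The two points that need care are the well-definedness of every generator appearing (each scalar lies in the right coset of $\Gamma$, as already checked for the extended definition together with Lemma~\ref{wow}), and the agreement of the truncated scalar $a'$ on both sides, which holds precisely because $g \in \ls i$ preserves $v_i$ and $v_{-i}$.
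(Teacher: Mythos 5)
Your proposal is correct and follows essentially the same route as the paper: decompose $Y_{(i)}(u,\,v,\,a)$ via the extended definition, observe that $g\in\ls i$ fixes $e_{\pm i}$ and hence preserves the $\pm i$-components of $v$, conjugate the three factors using the preceding lemma and Lemma~\ref{y-conjugated-by-ps} respectively, and reassemble. The bookkeeping of $v_i$, $v_{-i}$ and the scalar $a'$ is exactly the content of the paper's computation.
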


\begin{proof}
Since $g\in\ls i$ one gets 
$$
\big(\phi(g)v\big)_i=\lan\phi(g)v,\,e_{-i}\ran\eps i=\lan\phi(g)v,\,\phi(g)e_{-i}\ran\eps i=\lan v,\,e_{-i}\ran\eps i=v_i
$$
and similarly $\big(\phi(g)v\big)_{-i}=v_{-i}$. Then
\begin{multline*}
\!\,^gY_{(i)}(u,\,v,\,a)=\\
=\!\,^gY_{(i)}(u,v-e_iv_i-e_{-i}v_{-i},a-v_iv_{-i}\eps i)\cdot\,^gY(e_i,\,uv_i,\,0)\cdot\,^gY(e_{-i},\,uv_{-i},\,0)=\\
=Y_{(i)}(\phi(g)u,\phi(g)v-\phi(g)e_iv_i-\phi(g)e_{-i}v_{-i},a-v_iv_{-i}\eps i)\cdot\\
\cdot Y(e_i,\,\phi(g)uv_i,\,0)Y(e_{-i},\,\phi(g)uv_{-i},\,0)=\\
=Y_{(i)}(\phi(g)u,\phi(g)v-e_i\big(\phi(g)v\big)_i-e_{-i}\big(\phi(g)v\big)_{-i},a-\big(\phi(g)v\big)_i\big(\phi(g)v\big)_{-i}\eps i)\cdot\\
\cdot Y(e_i,\,\phi(g)u\big(\phi(g)v\big)_i,\,0)Y(e_{-i},\,\phi(g)u\big(\phi(g)v\big)_{-i},\,0)=\\
=Y_{(i)}(\phi(g)u,\,\phi(g)v,\,a).
\end{multline*}
\end{proof}

\begin{rk}
Obviously, in the absolute case $(I,\,\Gamma)=(R,\,R)$ for $u$, $v\in V$ such that $u_i=u_{-i}=u_j=u_{-j}=0$ and $v_j=v_{-j}=0$, $j\not\in\{\pm i\}$, $a\in R$, one has that such an element $X_{(i)}(u,\,v,\,a)$ lies in $\ls j$. Indeed, it is a product of elements from $\ls j$ by definition.
\end{rk}

\begin{lm}
\label{w-correctness}
For $j\not\in\{\pm i\}$, $u\in V$ such that $u_i=u_{-i}=u_j=u_{-j}=0$, $v\in I^{2l}$ such that $\lan u,\,v\ran=0$, and $a\in R$ such that $a-\lan v_-,\,v_+\ran\in\Gamma$ one has
$$
Y_{(i)}(u,\,v,\,a)=Y_{(j)}(u,\,v,\,a).
$$
\end{lm}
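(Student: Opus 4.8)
The plan is to reduce the statement to the case where $v$ has vanishing $\pm i$ and $\pm j$ coordinates, where the identity is exactly Lemma~\ref{z-correctness} with $r=1$. Write $v^0=v-e_iv_i-e_{-i}v_{-i}-e_jv_j-e_{-j}v_{-j}$, so that $v^0_{\pm i}=v^0_{\pm j}=0$, and set $a_0=a-v_iv_{-i}\eps i-v_jv_{-j}\eps j$. First I would apply the extended definition to peel off the $\pm i$ coordinates of $Y_{(i)}(u,\,v,\,a)$, producing the original-form factor $Y_{(i)}(u,\,v-e_iv_i-e_{-i}v_{-i},\,a-v_iv_{-i}\eps i)$ times $Y(e_i,\,uv_i,\,0)Y(e_{-i},\,uv_{-i},\,0)$. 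Since the remaining second argument has zero $\pm i$ coordinates, this factor is governed by the additivity Lemma~\ref{z-additivity}, which I would use twice to split off the $e_jv_j$ and $e_{-j}v_{-j}$ summands; the single-coordinate pieces $Y_{(i)}(u,\,e_{\pm j}v_{\pm j},\,0)$ then equal $Y(e_{\pm j},\,uv_{\pm j},\,0)$ by the (unlabelled) lemma $Y_{(i)}(u,\,e_jb,\,0)=Y(e_j,\,ub,\,0)$ preceding Lemma~\ref{z-correctness}. Bookkeeping of the long-root corrections (the factors $v^0_{-j}=0$ make them vanish) shows the surviving core term is exactly $Y_{(i)}(u,\,v^0,\,a_0)$, whence
$$Y_{(i)}(u,\,v,\,a)=Y_{(i)}(u,\,v^0,\,a_0)\,P_jP_i,$$
where $P_i=Y(e_i,\,uv_i,\,0)Y(e_{-i},\,uv_{-i},\,0)$ and $P_j=Y(e_j,\,uv_j,\,0)Y(e_{-j},\,uv_{-j},\,0)$. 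Running the identical argument with $i$ and $j$ interchanged yields $Y_{(j)}(u,\,v,\,a)=Y_{(j)}(u,\,v^0,\,a_0)\,P_iP_j$ with the same $v^0$ and $a_0$.

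Next I would invoke the base case: Lemma~\ref{z-correctness} with $r=1$ applies to $u$ and $v^0$, since both have vanishing $\pm i,\,\pm j$ coordinates, $\lan u,\,v^0\ran=0$, and a short check gives $Y_{(i)}(u,\,v^0,\,a_0)=Y_{(j)}(u,\,v^0,\,a_0)$. Thus the whole claim reduces to the commutation relation $P_iP_j=P_jP_i$.

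For the commutativity I would argue that $P_i\in\ls j$: each factor $Y(e_{\pm i},\,uv_{\pm i},\,0)$ lies in $\ls j$ by Lemma~\ref{unipotent-decomposition} (as noted in the remark that $Y(e_i,\,v,\,a)\in\ls j$ when $v_{-i}=v_{\pm j}=0$, which holds here because the vectors $uv_{\pm i}$ vanish in coordinates $\mp i,\,\pm j$), so $P_i^{-1}\in\ls j\subseteq\ps j$. Moreover $\phi(P_i)$ fixes $u$, since $\lan uv_{\pm i},\,u\ran=v_{\pm i}\lan u,\,u\ran=0$ and $\lan e_{\pm i},\,u\ran=0$ force $T(e_{\pm i},\,uv_{\pm i},\,0)u=u$. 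Applying Lemma~\ref{y-conjugated-by-ps} to each factor of $P_j$ (their second arguments $uv_{\pm j}$ have vanishing $\mp j$ coordinate) then gives $\!\,^{P_i^{-1}}Y(e_{\pm j},\,uv_{\pm j},\,0)=Y(e_{\pm j},\,\phi(P_i^{-1})uv_{\pm j},\,0)=Y(e_{\pm j},\,uv_{\pm j},\,0)$, that is $\!\,^{P_i^{-1}}P_j=P_j$, which is precisely $P_iP_j=P_jP_i$. Combining the three displayed facts closes the proof.

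I expect the main obstacle to be twofold: keeping the long-root $a$-value corrections consistent so that both expansions produce the same pair $(v^0,\,a_0)$, and verifying the well-definedness conditions $\,\cdot-\lan\cdot_-,\,\cdot_+\ran\in\Gamma$ at each intermediate step. These hold because the relevant discrepancies are either of the form $r b^2$ with $b\in I$ (for instance the defect of $Y(e_j,\,uv_j,\,0)$ is $-v_j^2\lan u_-,\,u_+\ran$) or multiples of $2I$ (the base-case defect $a_0-\lan v^0_-,\,v^0_+\ran$ differs from $a-\lan v_-,\,v_+\ran$ by $(1-\eps i)v_iv_{-i}+(1-\eps j)v_jv_{-j}\in 2I$), both of which lie in $\Gamma$ by the form-parameter axioms. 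The conceptually new ingredient, the commutation $P_iP_j=P_jP_i$, is by contrast short once one observes that $P_i$ lies in $\ls j$ and fixes $u$.
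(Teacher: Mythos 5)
Your proposal follows the paper's own proof essentially step for step: both peel off the $\pm i$ coordinates via the extended definition, split off the $\pm j$ coordinates with Lemma~\ref{z-additivity} to reach the common core $Y(u,\,\tilde{\tilde v},\,\tilde{\tilde a})$ (your $Y(u,\,v^0,\,a_0)$), invoke Lemma~\ref{z-correctness} with $r=1$ for that core, and finish by commuting the $\pm i$ and $\pm j$ single-coordinate factors using the fact that $X(e_{\pm i},\,uv_{\pm i},\,0)\in\ls j$. The argument is correct; your commutativity and well-definedness checks merely spell out what the paper leaves as ``obvious.''
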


\begin{proof}
Denote $\tilde v=v-e_iv_i-e_{-i}v_{-i}$, $\tilde a=a-v_iv_{-i}\eps i$. Then by Lemma~\ref{z-additivity} one gets
\begin{multline*}
Y_{(i)}(u,\,\tilde v,\,\tilde a)=Y_{(i)}(u,\,\tilde v-e_{-j}v_{-j},\,\tilde a-v_jv_{-j}\eps j)Y_{(i)}(u,\,e_{-j}v_{-j},\,0)=\\
=Y_{(i)}(u,\,\tilde v-e_{-j}v_{-j}-e_jv_j,\,\tilde a-v_jv_{-j}\eps j)Y_{(i)}(u,\,e_{j}v_{j},\,0)Y_{(i)}(u,\,e_{-j}v_{-j},\,0).
\end{multline*}
Further, denote $\tilde{\tilde v}=\tilde v-e_jv_j-e_{-j}v_{-j}$ and $\tilde{\tilde a}=\tilde a-v_jv_{-j}\eps j$. Then one has
\begin{multline*}
Y_{(i)}(u,\,v,\,a)=Y_{(i)}(u,\,\tilde{\tilde v},\,\tilde{\tilde a})Y(e_j,\,uv_j,\,0)Y(e_{-j},\,uv_{-j},\,0)\cdot\\
\cdot Y(e_i,\,uv_i,\,0)Y(e_{-i},\,uv_{-i},\,0).
\end{multline*}
Changing roles of $i$ and $j$ one gets
\begin{multline*}
Y_{(j)}(u,\,v,\,a)=Y_{(j)}(u,\,\tilde{\tilde v},\,\tilde{\tilde a})Y(e_i,\,uv_i,\,0)Y(e_{-i},\,uv_{-i},\,0)\cdot\\
\cdot Y(e_j,\,uv_j,\,0)Y(e_{-j},\,uv_{-j},\,0).
\end{multline*}
But $Y_{(i)}(u,\,\tilde{\tilde v},\,\tilde{\tilde a})=Y_{(j)}(u,\,\tilde{\tilde v},\,\tilde{\tilde a})$ by Lemma~\ref{z-correctness}. Finally, it remains to observe that $Y(e_{-i},\,uv_{-i},\,0)$ and $Y(e_i,\,uv_i,\,0)$ commute with both $Y(e_j,\,uv_j,\,0)$ and $Y(e_{-j},\,uv_{-j},\,0)$. This is obvious from the fact that $X(e_{-i},\,uv_{-i},\,0)$ and $X(e_i,\,uv_i,\,0)$ lie in $\ls j$.
\end{proof}

\begin{rk}
For $u$ equal to the base vector $e_j$ using Lemma~\ref{z-decomposition-for-y} one gets 
\begin{multline*}
Y_{(i)}(e_j,\,v,\,a)=\\=Y_{(i)}(e_j,v-e_iv_i-e_{-i}v_{-i},a-v_iv_{-i}\eps i)Y(e_i,\,e_jv_i,\,0)Y(e_{-i},\,e_jv_{-i},\,0)=\\=Y(e_j,v-e_iv_i-e_{-i}v_{-i},a-v_iv_{-i}\eps i)Y(e_j,\,e_iv_i,\,0)Y(e_{j},\,e_{-i}v_{-i},\,0)=\\=Y(e_j,\,v,\,a).
\end{multline*}
\end{rk}

\begin{df}
For $u$ having at least two pairs of zeros the element $Y_{(i)}(u,\,v,\,a)$ does not depend on the choice of $i$ by Lemma~\ref{w-correctness}. In this situation we will often omit the index in the notation,
$$
Y(u,\,v,\,a)=Y_{(i)}(u,\,v,\,a).
$$
\end{df}

\begin{df}
For $u\in V$, $w\in I^{2n}$, such that $\lan u,\,w\ran=0$, $w_i=w_{-i}=0$ and $a\in\lan w_-,\,w_+\ran+\Gamma$ define
\begin{multline*}
Z_{(i)}(u,\,w,\,a)=Y_{(i)}(u-e_iu_i-e_{-i}u_{-i},\,w,\,a)Y(e_iu_i+e_{-i}u_{-i},\,w,\,a)\cdot\\
\cdot Y(e_iu_i+e_{-i}u_{-i},\,(u-e_iu_i-e_{-i}u_{-i})a,\,0).
\end{multline*}
\end{df}

Our next objective is to show that $Z_{(i)}(u,\,0,\,a)$ does not depend on the choice of $i$.

\begin{lm}
\label{long-add}
For $u\in V$ such that $u_i=u_{-i}=u_j=u_{-j}=0$, $v\in I^{2l}$ such that $v_{i}=v_{-i}=0$, $\lan u,\,v\ran=0$, $a\in R$ such that $a-\lan v_-,\,v_+\ran\in\Gamma$, $b\in\Gamma$ one has
$$
Y(u,\,v,\,a+b)=Y(u,\,v,\,a)Y(u,\,0,\,b).
$$
\end{lm}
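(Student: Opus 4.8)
The plan is to expand the three elements through the defining formula for $Y_{(i)}$---which applies directly since $v_i=v_{-i}=0$---split off the extra long argument $b$ by Lemma~\ref{y-add}, and reduce everything to a single commutation inside $\St\!\Sp_{2l}(R,\,I,\,\Gamma)$. Write $X=X(e_i,\,u,\,0)$, $P=Y(e_{-i},\,v\eps i,\,a)$, $Q=Y(e_{-i},\,0,\,b)=Y_{-i,i}(b)$ and $R(c)=Y(e_{-i},\,uc\eps{-i},\,0)$. First I would record that all three elements are well defined: since $a-\lan v_-,\,v_+\ran\in\Gamma$ and $b\in\Gamma$, also $a+b-\lan v_-,\,v_+\ran\in\Gamma$, which are exactly the conditions needed for $Y(u,\,v,\,a+b)$ and $Y(u,\,0,\,b)$.

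By Lemma~\ref{y-add} one has $Y(e_{-i},\,v\eps i,\,a+b)=PQ$ and $R(a)R(b)=R(a+b)$, the cross terms vanishing because $\lan u,\,u\ran=0$. Hence $Y(u,\,v,\,a+b)=\llbracket X,\,PQ]\,R(a)R(b)$, and applying $\llbracket X,\,PQ]=\llbracket X,\,P]\cdot{}^{P}\llbracket X,\,Q]$ and comparing with $Y(u,\,v,\,a)\,Y(u,\,0,\,b)=\llbracket X,\,P]\,R(a)\cdot\llbracket X,\,Q]\,R(b)$, the outer factors $\llbracket X,\,P]$ and $R(b)$ cancel. Writing $C=\llbracket X,\,Q]$, the whole statement therefore collapses to ${}^{P}C\cdot R(a)=R(a)\,C$, that is, to
$$
{}^{S}C=C,\qquad S:=R(a)\inv P=Y(e_{-i},\,v\eps i-ua\eps{-i},\,a),
$$
where the computation of $S$ again uses $\lan u,\,v\ran=0$ to see that its long component is exactly $a$.

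To prove ${}^{S}C=C$ I would first use that $Q=Y_{-i,i}(b)$ is a long root element, hence central in $\ur{-i}$ by Lemma~\ref{heis}; since $S\in\ur{-i}$ this gives ${}^{S}Q=Q$, so it is enough to show that $S$ commutes with ${}^{X}Q=CQ$, equivalently with $C$ itself. Next I would make $C$ explicit. As $u_i=u_{-i}=u_j=u_{-j}=0$, the image satisfies $\varphi(C)=[T(e_i,\,u,\,0),\,T(e_{-i},\,0,\,b)]$, which by Lemma~\ref{z-decomposition} equals $T(u,\,0,\,b)\,T(e_{-i},\,-ub\eps{-i},\,0)$ and is supported away from the directions $\pm j$; concretely, expanding $\llbracket X,\,Q]$ transvection by transvection with Lemma~\ref{ppc} exhibits $C$ as a product of long root elements $Y_{m,-m}$ and short root elements $Y_{m,i}\in\ur{-i}$ with $m\notin\{\pm i,\pm j\}$.

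The main difficulty is the final check that $S$ centralises this explicit $C$. The short constituents of $C$ lie in $\ur{-i}$ and so commute with $S$ only up to central $Y_{-i,i}$-terms (Lemma~\ref{heis}), while the long constituents $Y_{m,-m}$ lie outside $\ur{-i}$ and must be moved past the components of $S$ by the Steinberg and KL relations, again producing $Y_{-i,i}$-corrections. The heart of the argument is to verify that all these corrections cancel. This is precisely where $\lan u,\,v\ran=0$ (killing the cross terms that would otherwise obstruct commutation) and $b\in\Gamma$ (keeping every intermediate long argument inside $\Gamma$, so that each $Y$ occurring is defined) enter. Once ${}^{S}C=C$ is established, reversing the reduction yields $Y(u,\,v,\,a+b)=Y(u,\,v,\,a)\,Y(u,\,0,\,b)$.
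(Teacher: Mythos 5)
Your reduction is correct as far as it goes: splitting $Y(e_{-i},\,v\eps i,\,a+b)=PQ$ and $R(a+b)=R(a)R(b)$ by Lemma~\ref{y-add} and cancelling the common factors does bring the lemma down to the single identity ${}^{P}C\cdot R(a)=R(a)\cdot C$ with $C=\llbracket X,\,Q]$, equivalently ${}^{S}C=C$ for $S=R(a)\inv P$. But at exactly that point the argument stops: you yourself label the verification that $S$ centralises $C$ ``the main difficulty'' and then only describe the shape of a computation (expanding $C$ into elementary root elements and checking that all central $Y_{-i,i}$-corrections cancel) without carrying it out. That verification is the entire content of the lemma, so as written this is a genuine gap, not a proof --- and the route you sketch is an unnecessarily hard one, since treating $C$ in isolation forces you to track correction terms from Lemma~\ref{heis} and the KL-relations one root element at a time.

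The missing idea is to \emph{not} separate $\llbracket X,\,Q]$ from $R(b)$: their product is, by the very definition, $Y_{(i)}(u,\,0,\,b)=Y(u,\,0,\,b)$, and the corresponding absolute element $X(u,\,0,\,b)$ lies in $\ls i$ (this is where the hypothesis $u_j=u_{-j}=0$, which your outline never uses, is needed). Since $T(u,\,0,\,b)$ fixes both $v\eps i$ (as $\lan u,\,v\ran=0$) and $ua\eps{-i}$ (as $\lan u,\,u\ran=0$), conjugation by $Y(u,\,0,\,b)$ --- identified with the action of $X(u,\,0,\,b)$ --- fixes $P$ and $R(a)$, and the required rearrangement
$$
\llbracket X,\,P]\cdot{}^{P}\big(\llbracket X,\,Q]\,R(b)\big)\cdot R(a)=\llbracket X,\,P]\cdot Y(u,\,0,\,b)\cdot R(a)=\llbracket X,\,P]\,R(a)\cdot Y(u,\,0,\,b)
$$
follows in one line. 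If you prefer to keep your reduction, the cleanest repair is the same observation in disguise: write $C=Y(u,\,0,\,b)\,R(b)\inv$ and check that each factor commutes with $S$ --- the first by the Levi argument above, the second inside $\ur{-i}$ by Lemma~\ref{y-add}, since the cross term $\lan ub\eps{-i},\,v\eps i-ua\eps{-i}\ran$ vanishes because $\lan u,\,v\ran=\lan u,\,u\ran=0$ --- rather than expanding $C$ into elementary root elements.
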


\begin{proof}
Decompose $Y(u,\,v,\,a+b)$ as follows
\begin{multline*}
Y(u,\,v,\,a+b)=Y_{(i)}(u,\,v,\,a+b)=\\
=\llbracket X(e_i,\,u,\,0),\,Y(e_{-i},\,v\eps i,\,a+b)]Y(e_{-i},\,u(a+b)\eps{-i},\,0).
\end{multline*}
Using $\llbracket a,\,bc]=\llbracket a,\,b]\cdot\,^b\llbracket a,\,c]$ we obtain
\begin{multline*}
\llbracket X(e_i,\,u,\,0),\,Y(e_{-i},\,v\eps i,\,a+b)]=\\
=\llbracket X(e_i,\,u,\,0),\,Y(e_{-i},\,v\eps i,\,a)Y(e_{-i},\,0,\,b)]=\\
=\llbracket X(e_i,\,u,\,0),\,Y(e_{-i},\,v\eps i,\,a)]\cdot\,^{Y(e_{-i},\,v\eps i,\,a)}\llbracket X(e_i,\,u,\,0),\,Y(e_{-i},\,0,\,b)].
\end{multline*}
Since $\lan u,\,v\ran=0$ one has 
$$\!\,^{X(e_{-i},\,v\eps i,\,a)}Y(e_{-i},\,ub\eps{-i},\,0)=Y(e_{-i},\,ub\eps{-i},\,0),$$ 
and thus
\begin{multline*}
Y(u,\,v,\,a+b)=\\
=\llbracket X(e_i,\,u,\,0),\,Y(e_{-i},\,v\eps i,\,a+b)]Y(e_{-i},\,ub\eps{-i},\,0)Y(e_{-i},\,ua\eps{-i},\,0)=\\
=\llbracket X(e_i,\,u,\,0),\,Y(e_{-i},\,v\eps i,\,a)]\cdot\,^{Y(e_{-i},\,v\eps i,\,a)}\llbracket X(e_i,\,u,\,0),\,Y(e_{-i},\,0,\,b)]\cdot\\
\cdot\,^{X(e_{-i},\,v\eps i,\,a)}Y(e_{-i},\,ub\eps{-i},\,0)\cdot Y(e_{-i},\,ua\eps{-i},\,0)=\\
=\llbracket X(e_i,\,u,\,0),\,Y(e_{-i},\,v\eps i,\,0)]\cdot\,^{Y(e_{-i},\,v\eps i,\,a)}Y_{(i)}(u,\,0,\,b)\cdot Y(e_{-i},\,ua\eps{-i},\,0).
\end{multline*}
Recall that $X_{(j)}(u,\,0,\,b)\in\ls i$ acts trivially on both $Y(e_{-i},\,v\eps i,\,a)$ and $Y(e_{-i},\,ua\eps{-i},\,0)$, so that
\begin{multline*}
Y(u,\,v,\,a+b)=[Y(e_i,\,u,\,0),\,Y(e_{-i},\,v\eps i,\,a)]Y(e_{-i},\,ua\eps{-i},\,0)Y(u,\,0,\,b)=\\
=Y_{(i)}(u,\,v,\,a)Y(u,\,0,\,b).
\end{multline*}
\end{proof}

\begin{rk}
For $u\in V$ having at least two pairs of zeros one has $Y(u,\,0,\,0)=1$ and $Y(u,\,0,\,a)\inv=Y(u,\,0,\,-a)$.
\end{rk}

\begin{lm}
\label{w=zz}
For $u\in V$ such that $u_i=u_{-i}=u_j=u_{-j}=0$, $v\in I^{2l}$ such that $\lan u,\,v\ran=0$ and $v_iv_{-i}\in\Gamma$, $a\in R$ such that $a-\lan v_-,\,v_+\ran\in\Gamma$ one has
$$
Y(u,\,v,\,a)=Y(u,\,v-e_iv_i-e_{-i}v_{-i},\,a)Y(u,\,e_iv_i+e_{-i}v_{-i},\,0).
$$
\end{lm}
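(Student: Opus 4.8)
The plan is to strip the $\pm i$-components off $v$ by applying the extended definition of $Y_{(i)}$ twice --- once to $Y(u,\,v,\,a)$ itself and once to the second factor on the right-hand side --- and then to reconcile the two long-root scalars by means of Lemma~\ref{long-add}. Write $\tilde v=v-e_iv_i-e_{-i}v_{-i}$ and $\tilde a=a-v_iv_{-i}\eps i$. Since $u$ has two pairs of zeros, $Y(u,\,v,\,a)=Y_{(i)}(u,\,v,\,a)$, and the definition of $Y_{(i)}$ for $u$ with $u_i=u_{-i}=0$ gives
$$
Y(u,\,v,\,a)=Y_{(i)}(u,\,\tilde v,\,\tilde a)\,Y(e_i,\,uv_i,\,0)\,Y(e_{-i},\,uv_{-i},\,0),
$$
whose first factor has the basic shape because $\tilde v_i=\tilde v_{-i}=0$.

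Applying the same definition to $w=e_iv_i+e_{-i}v_{-i}$, where now $w-e_iw_i-e_{-i}w_{-i}=0$, collapses it to
$$
Y(u,\,e_iv_i+e_{-i}v_{-i},\,0)=Y(u,\,0,\,-v_iv_{-i}\eps i)\,Y(e_i,\,uv_i,\,0)\,Y(e_{-i},\,uv_{-i},\,0).
$$
The two trailing factors here coincide with those in the first display, so the claim reduces to the single identity $Y_{(i)}(u,\,\tilde v,\,\tilde a)=Y(u,\,\tilde v,\,a)\,Y(u,\,0,\,-v_iv_{-i}\eps i)$.

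To obtain this identity I would use Lemma~\ref{long-add}. Since $a=\tilde a+v_iv_{-i}\eps i$ and $v_iv_{-i}\eps i\in\Gamma$ (as $v_iv_{-i}\in\Gamma$), that lemma applied to $Y(u,\,\tilde v,\,\tilde a+v_iv_{-i}\eps i)$ gives $Y(u,\,\tilde v,\,a)=Y(u,\,\tilde v,\,\tilde a)\,Y(u,\,0,\,v_iv_{-i}\eps i)$, and $Y(u,\,\tilde v,\,\tilde a)=Y_{(i)}(u,\,\tilde v,\,\tilde a)$ since $u$ has two pairs of zeros; inverting the long-root factor via $Y(u,\,0,\,c)\inv=Y(u,\,0,\,-c)$ yields the desired identity. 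Substituting it into the first display and recognising the second finishes the proof.

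The only delicate point, which I would settle at the outset, is well-definedness: each element must have its scalar parameter minus the corresponding $\lan\,\cdot_-,\,\cdot_+\ran$ in $\Gamma$. Taking $i>0$ (the case $i<0$ is identical), one computes $\lan\tilde v_-,\,\tilde v_+\ran=\lan v_-,\,v_+\ran+v_iv_{-i}$, so the defect of $Y_{(i)}(u,\,\tilde v,\,\tilde a)$ is $(a-\lan v_-,\,v_+\ran)-2v_iv_{-i}\in\Gamma$ by the hypothesis on $a$ and $2I\subseteq\Gamma$; and $\lan w_-,\,w_+\ran=-v_iv_{-i}$, so the defect of $Y(u,\,w,\,0)$ is exactly $v_iv_{-i}\in\Gamma$, which is where the hypothesis $v_iv_{-i}\in\Gamma$ enters.
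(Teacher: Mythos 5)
Your proposal is correct and follows essentially the same route as the paper: expand $Y(u,v,a)$ via the extended definition of $Y_{(i)}$, shift the long-root scalar with Lemma~\ref{long-add}, and recognise the remaining product as $Y_{(i)}(u,e_iv_i+e_{-i}v_{-i},0)$. The only cosmetic differences are that you expand the second right-hand factor and cancel instead of recombining, and that you spell out the well-definedness computation that the paper leaves as a remark.
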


\begin{proof}
By definition
\begin{multline*}
Y(u,\,v,\,a)=Y_{(i)}(u,\,v,\,a)=\\
=Y_{(i)}(u,v-e_iv_i-e_{-i}v_{-i},a-v_iv_{-i}\eps i)Y(e_i,\,uv_i,\,0)Y(e_{-i},\,uv_{-i},\,0).
\end{multline*}
Denote $\tilde v=v-e_iv_i-e_{-i}v_{-i}$, then by the previous lemma
$$
Y(u,\tilde v,a-v_iv_{-i}\eps i)=Y(u,\,\tilde v,\,a)Y(u,\,0,\,-v_iv_{-i}\eps i),
$$
and thus
\begin{multline*}
Y(u,\,v,\,a)=Y(u,\,\tilde v,\,a)Y(u,\,0,\,-v_iv_{-i}\eps i)Y(e_i,\,uv_i,\,0)Y(e_{-i},\,uv_{-i},\,0)=\\
=Y(u,\,\tilde v,\,a)Y_{(i)}(u,\,e_iv_i+e_{-i}v_{-i},\,0).
\end{multline*}
\end{proof}

\begin{cl*}
Consider $u\in V$, $w\in I^{2n}$, such that $\lan u,\,w\ran=0$, $w_i=w_{-i}=0$, $a\in\lan w_-,\,w_+\ran+\Gamma$, $j\not\in\{\pm i\}$ and denote $v=e_iu_i+e_{-i}u_{-i}$, $v'=e_ju_j+e_{-j}u_{-j}$, $\tilde u=u-v$, $\tilde{\tilde u}=\tilde u-v'$. Then one has
\begin{multline*}
Z_{(i)}(u,\,w,\,a)=Y_{(i)}(\tilde u,\,w,\,a)Y(v,\,w,\,a)Y(v,\,\tilde ua,\,0)=\\
=Y_{(i)}(\tilde u,\,w,\,a)Y(v,\,w,\,a)Y(v,\,\tilde{\tilde u}a,\,0)Y(v,\,v'a,\,0).
\end{multline*}
\end{cl*}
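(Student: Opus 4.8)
The plan is to handle the two equalities in turn. The first equality is pure bookkeeping: unwinding the defining formula for $Z_{(i)}(u,\,w,\,a)$ and abbreviating $v=e_iu_i+e_{-i}u_{-i}$, $\tilde u=u-v=u-e_iu_i-e_{-i}u_{-i}$, the three displayed factors become exactly $Y_{(i)}(\tilde u,\,w,\,a)$, $Y(v,\,w,\,a)$ and $Y(v,\,\tilde ua,\,0)$. So for the first equality there is nothing to prove beyond substituting notation, and I note in passing that $v$ has $l-1\geq2$ pairs of vanishing coordinates, so the index-free symbol $Y(v,\,\cdot,\,\cdot)$ is legitimate.

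For the second equality it suffices to factor the last term, i.e. to prove $Y(v,\,\tilde ua,\,0)=Y(v,\,\tilde{\tilde u}a,\,0)\,Y(v,\,v'a,\,0)$, since the two leading factors agree on both sides. I would get this directly from Lemma~\ref{w=zz}, applied to $Y(v,\,\tilde ua,\,0)$ but splitting along the index $j$ instead of $i$. Taking the first argument to be $v$ and the second to be $\tilde ua$, the vector $v$ vanishes on $\{j,\,-j\}$ (indeed everywhere off $\{i,\,-i\}$) and has a further pair of zero coordinates since $l\geq3$; moreover $\lan v,\,\tilde ua\ran=a\lan v,\,\tilde u\ran=0$ because $\tilde u$ vanishes on $\{i,\,-i\}$. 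Lemma~\ref{w=zz} then peels off the $j$- and $(-j)$-components of the second argument; since $\tilde u_{\pm j}=u_{\pm j}$ one has $(\tilde ua)_{\pm j}=u_{\pm j}a$, so the split-off part is $e_ju_ja+e_{-j}u_{-j}a=v'a$ and the remainder is $\tilde ua-e_ju_ja-e_{-j}u_{-j}a=\tilde{\tilde u}a$, which is precisely the asserted factorisation.

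The only thing needing care is checking the hypotheses of Lemma~\ref{w=zz}, all of which are $\Gamma$-membership conditions supplied by the form-parameter axioms. The condition on the second argument reads $(\tilde ua)_j(\tilde ua)_{-j}=u_ju_{-j}a^2\in\Gamma$, which holds by the axiom $ra^2\in\Gamma$ with $r=u_ju_{-j}$ and $a\in I$; the same axiom gives $-a^2\lan\tilde u_-,\,\tilde u_+\ran\in\Gamma$, i.e. the requirement $0-\lan(\tilde ua)_-,\,(\tilde ua)_+\ran\in\Gamma$. The identical checks confirm that both $Y(v,\,\tilde{\tilde u}a,\,0)$ and $Y(v,\,v'a,\,0)$ on the right are well-defined. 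I do not expect a genuine obstacle: the entire content is recognising that the long-root splitting of Lemma~\ref{w=zz}, performed along the $j$-direction, reproduces exactly the two-term decomposition in the statement.
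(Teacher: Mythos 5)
Your proof is correct and follows the paper's own route: the first equality is the definition of $Z_{(i)}(u,\,w,\,a)$ rewritten in the abbreviated notation, and the second is Lemma~\ref{w=zz} applied to $Y(v,\,\tilde ua,\,0)$ with the splitting performed along the index $j$, legitimate because $v$, being supported on $\{\pm i\}$, has at least two pairs of vanishing coordinates when $l\geq3$. The paper's proof is just this one-line observation; your version adds the explicit verification of the $\Gamma$-membership hypotheses, which the paper leaves implicit.
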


\begin{proof}
Since $l\geq3$ the vector $v$ has at least two pairs of zeros, so that one can apply the previous lemma.
\end{proof}

\begin{lm}
\label{short-is-three-long}
For $j$, $k\not\in\{\pm i\}$, $u$, $v\in V$ such that $u_i=u_{-i}=u_j=u_{-j}=0$ and $v_i=v_{-i}=v_k=v_{-k}=0$, $\lan u,\,v\ran=0$, $w\in I^{2l}$ such that $w_i=w_{-i}=0$, $\lan u,\,w\ran=\lan v,\,w\ran=0$ and $a\in\lan w_-,\,w_+\ran+\Gamma$ holds
$$
Y_{(i)}(u+v,\,w,\,a)=Y(u,\,w,\,a)\cdot Y(v,\,w,\,a)\cdot Y(v,\,ua,\,0).
$$
\end{lm}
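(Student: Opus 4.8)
The plan is to peel $Y_{(i)}(u+v,\,w,\,a)$ apart by the defining commutator formula, isolate the correction, and then reduce the whole statement to a single triviality of conjugation. Since $w_i=w_{-i}=0$ and $\lan u+v,\,w\ran=0$, the basic definition reads
$$Y_{(i)}(u+v,\,w,\,a)=\llbracket X(e_i,\,u+v,\,0),\,Y(e_{-i},\,w\eps i,\,a)]\,Y(e_{-i},\,(u+v)a\eps{-i},\,0),$$
and all three terms of the claim are well-defined, the conditions $a-\lan w_-,\,w_+\ran\in\Gamma$ and $-a^2\lan u_-,\,u_+\ran\in\Gamma$ being immediate from the form-parameter axioms. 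As a guide I would first record the computation in $\Sp(V)$: Lemma~\ref{esd-properties} yields $T(u+v,\,w,\,a)=T(u,\,w,\,a)T(v,\,w,\,a)T(v,\,ua,\,0)$, so $\varphi$ sends both sides of the claim to the same element; this fixes the shape of the correction and is a convenient running check.

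As $\lan u,\,v\ran=0$ the absolute generators $X(e_i,\,u,\,0)$ and $X(e_i,\,v,\,0)$ commute and $X(e_i,\,u+v,\,0)=X(e_i,\,v,\,0)X(e_i,\,u,\,0)$. Applying $\llbracket gg',\,h]=\,^{g}\llbracket g',\,h]\,\llbracket g,\,h]$, reading the inner commutators off the definition of $Y_{(i)}$ (so $\llbracket X(e_i,\,u,\,0),\,Y(e_{-i},\,w\eps i,\,a)]=Y_{(i)}(u,\,w,\,a)Y(e_{-i},\,-ua\eps{-i},\,0)$, and likewise for $v$), letting the long-root tails from $Y(e_{-i},\,(u+v)a\eps{-i},\,0)=Y(e_{-i},\,va\eps{-i},\,0)Y(e_{-i},\,ua\eps{-i},\,0)$ telescope, and using $-ua\eps{-i}=ua\eps i$ once more to evaluate $\,^{X(e_i,\,v,\,0)}Y(e_{-i},\,-ua\eps{-i},\,0)=Y(v,\,ua,\,0)Y(e_{-i},\,-ua\eps{-i},\,0)$, one reaches the exact identity
$$Y_{(i)}(u+v,\,w,\,a)=\,^{X(e_i,\,v,\,0)}Y_{(i)}(u,\,w,\,a)\cdot Y(v,\,ua,\,0)\cdot\,^{Y(e_{-i},\,-ua\eps{-i},\,0)}Y_{(i)}(v,\,w,\,a).$$

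Next I would dispose of the right-hand conjugation. By relation~KL7 it equals the action of $X(e_{-i},\,-ua\eps{-i},\,0)\in\ps{-i}$ on $Y_{(i)}(v,\,w,\,a)$; pushing this through the definition, Lemma~\ref{y-conjugated-by-ps} fixes both $Y(e_{-i},\,w\eps i,\,a)$ and $Y(e_{-i},\,va\eps{-i},\,0)$ (because $\lan u,\,w\ran$, $\lan u,\,v\ran$, $w_i$, $v_i$ all vanish), while the inner $X(e_i,\,v,\,0)$ only picks up a commutator factor $Q=\llbracket X(e_{-i},\,-ua\eps{-i},\,0),\,X(e_i,\,v,\,0)]$ that lies in $\ps{-i}$ and, having $\phi$-image $T(v,\,ua,\,0)\inv$ fixing $e_{-i}$ and $w\eps i$, acts trivially on $Y(e_{-i},\,w\eps i,\,a)$ again by Lemma~\ref{y-conjugated-by-ps}; so one checks that the whole conjugation collapses to $Y(v,\,w,\,a)$. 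Since moreover $Y(v,\,ua,\,0)$ and $Y(v,\,w,\,a)$ commute (second-variable additivity of Lemma~\ref{y-add} together with $\lan ua,\,w\ran=0$), substituting into the displayed identity shows that the lemma is \emph{equivalent} to the single assertion
$$\,^{X(e_i,\,v,\,0)}Y_{(i)}(u,\,w,\,a)=Y_{(i)}(u,\,w,\,a),$$
i.e. that $X(e_i,\,v,\,0)$ acts trivially on $Y_{(i)}(u,\,w,\,a)$.

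This last triviality is the main obstacle. At the level of $\phi$ it is clear, since $\phi(X(e_i,\,v,\,0))=T(e_i,\,v,\,0)$ fixes both $u$ and $w$; but $X(e_i,\,v,\,0)$ lies in $\ps i\setminus\ps{-i}$, hence outside the Levi subgroup $\ls i$, so Lemma~\ref{w-conjugation} does not apply, and — in contrast to the right-hand conjugation — Lemma~\ref{y-conjugated-by-ps} cannot be used to keep the factors $Y(e_{-i},\,w\eps i,\,a)$ and $Y(e_{-i},\,ua\eps{-i},\,0)$ of $Y_{(i)}(u,\,w,\,a)$ fixed: the action genuinely perturbs them ($e_{-i}\mapsto e_{-i}+v\eps i$ under $\phi$). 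I would therefore expand the definition — here $X(e_i,\,u,\,0)$ is fixed because it commutes with $X(e_i,\,v,\,0)$ — compute the two perturbations by Lemma~\ref{ppc}, and resolve the resulting commutator by the Hall--Witt manipulation already used for Lemma~\ref{z-correctness}. The delicate point is the bookkeeping of the terms this produces; the computation closes only because every unwanted term is a multiple of one of $\lan u,\,v\ran$, $\lan u,\,w\ran$, $\lan v,\,w\ran$ and therefore vanishes, leaving $Y_{(i)}(u,\,w,\,a)$ unchanged.
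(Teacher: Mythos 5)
Your opening moves coincide with the paper's: writing $X(e_i,\,u+v,\,0)=X(e_i,\,v,\,0)X(e_i,\,u,\,0)$ inside the commutator, applying $\llbracket gg',\,h]=\,^{g}\llbracket g',\,h]\cdot\llbracket g,\,h]$ and telescoping the long-root tails is exactly how the proof starts, and your displayed intermediate identity is correct. Two secondary points, though. First, you need $[Y(v,\,ua,\,0),\,Y(v,\,w,\,a)]=1$ to reorder the last two factors, and Lemma~\ref{y-add} does not give it --- that lemma only treats $Y(e_i,\,\cdot,\,\cdot)$ with a basis vector in the first slot; the paper avoids needing this by instead commuting $Y(v,\,w,\,a)$ past $\!\,^{X(e_i,\,v,\,0)}Y(e_{-i},\,-ua\eps{-i},\,0)$, using that $X_{(k)}(v,\,w,\,a)\in\ls i$ while $X(e_i,\,v,\,0)\in\ls k$. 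Second, your treatment of $\!\,^{Y(e_{-i},\,-ua\eps{-i},\,0)}Y_{(i)}(v,\,w,\,a)$ is workable but rests on identifying $Q$ with $X(v,\,-ua,\,0)\in\ls i$, which you assert rather than derive.

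The genuine gap is the step you yourself call the main obstacle: $\!\,^{X(e_i,\,v,\,0)}Y_{(i)}(u,\,w,\,a)=Y_{(i)}(u,\,w,\,a)$. You propose to expand $Y_{(i)}(u,\,w,\,a)$ with the index $i$, accept that both factors $Y(e_{-i},\,w\eps i,\,a)$ and $Y(e_{-i},\,ua\eps{-i},\,0)$ are genuinely perturbed, compute the perturbations "by Lemma~\ref{ppc}" and resolve everything by Hall--Witt, conceding that the bookkeeping is delicate. This is not a proof: Lemma~\ref{ppc} computes $\llbracket X(e_i,\,e_jr,\,0),\,Y(e_{-i},\,w',\,a)]$ only under the hypothesis $w'_{-j}=0$, which fails here because the supports of $v$ and $w$ may overlap, so the perturbations cannot even be written down with the available lemmas, and no cancellation is actually exhibited. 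The paper's argument avoids all of this with one move you are missing: by index-independence rewrite $Y(u,\,w,\,a)=Y_{(-i)}(u,\,w,\,a)=\llbracket X(e_{-i},\,u,\,0),\,Y(e_i,\,w\eps{-i},\,a)]\,Y(e_i,\,ua\eps i,\,0)$. Now the relative factors lie in $\ur i$ and are fixed by $X(e_i,\,v,\,0)\in\ps i$ via Lemma~\ref{y-conjugated-by-ps}, since $T(e_i,\,v,\,0)$ fixes both $w$ and $u$; the only perturbation is of the absolute factor, $X(e_{-i},\,u,\,0)\mapsto X(e_{-i}+v\eps i,\,u,\,0)=X(e_{-i},\,u,\,0)X(v\eps i,\,u,\,0)$, and the extra commutator $\llbracket X(v\eps i,\,u,\,0),\,Y(e_i,\,w\eps{-i},\,a)]$ vanishes because $X(v\eps i,\,u,\,0)\in\ls i$. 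Until you supply this (or an equivalent executed) argument, the lemma is not proved.
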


\begin{proof}
Decomposing $X(e_i,\,u+v,\,0)=X(e_i,\,v,\,0)X(e_i,\,u,\,0)$ inside the commutator and using $\llbracket ab,\,c]=\,^a\llbracket b,\,c]\cdot\llbracket a,\,c]$ we get
\begin{multline*}
Y_{(i)}(u+v,\,w,\,a)=\llbracket X(e_i,\,u+v,\,0),\,Y(e_{-i},\,w,\,a)]Y(e_{-i},\,(u+v)a\eps{-i},\,0)=\\
=\llbracket X(e_i,\,v,\,0)X(e_i,\,u,\,0),\,Y(e_{-i},\,w,\,a)]Y(e_{-i},\,va\eps{-i},\,0)Y(e_{-i},\,ua\eps{-i},\,0)=\\
=\!\,^{X(e_i,\,v,\,0)}\llbracket X(e_i,\,u,\,0),\,Y(e_{-i},\,w,\,a)]\cdot\llbracket X(e_i,\,v,\,0),\,Y(e_{-i},\,w,\,a)]\cdot\\
\cdot Y(e_{-i},\,va\eps{-i},\,0)Y(e_{-i},\,ua\eps{-i},\,0)=\!\,^{X(e_i,\,v,\,0)}\llbracket X(e_i,\,u,\,0),\,Y(e_{-i},\,w,\,a)]\cdot\\
\cdot Y_{(i)}(v,\,w,\,a)Y(e_{-i},\,ua\eps{-i},\,0).
\end{multline*}
Observe that $Y_{(k)}(v,\,w,\,a)\in\ls i$ commutes with $Y(e_{-i},\,-ua\eps{-i},\,0)$, moreover, $X(e_i,\,v,\,0)\in\ls k$ and acts trivially on $Y_{(k)}(v,\,w,\,a)$, thus $Y_{(k)}(v,\,w,\,a)$ commutes with $\!\,^{X(e_i,\,v,\,0)}Y(e_{-i},\,-ua\eps{-i},\,0)$. Thus, we get
\begin{multline*}
\!\,^{X(e_i,\,v,\,0)}\llbracket X(e_i,\,u,\,0),\,Y(e_{-i},\,w,\,a)]\cdot Y(v,\,w,\,a)\cdot Y(e_{-i},\,ua\eps{-i},\,0)=\\
=\!\,^{X(e_i,\,v,\,0)}\llbracket X(e_i,\,u,\,0),\,Y(e_{-i},\,w,\,a)]\cdot\,^{X(e_i,\,v,\,0)}Y(e_{-i},\,ua\eps{-i},\,0)\cdot\\
\cdot\,^{X(e_i,\,v,\,0)}Y(e_{-i},\,-ua\eps{-i},\,0)\cdot Y(v,\,w,\,a)\cdot Y(e_{-i},\,ua\eps{-i},\,0)=\\
=\!\,^{X(e_i,\,v,\,0)}Y_{(i)}(u,\,w,\,a)\cdot Y(v,\,w,\,a)\cdot\llbracket X(e_i,\,v,\,0),\,Y(e_{-i},\,-ua\eps{-i},\,0)]=\\
=\!\,^{X(e_i,\,v,\,0)}Y(u,\,w,\,a)\cdot Y(v,\,w,\,a)\cdot Y(v,\,ua,\,0).
\end{multline*}
It only remains to show that $$\!\,^{X(e_i,\,v,\,0)}Y(u,\,w,\,a)=Y(u,\,w,\,a).$$ Decompose 
$$Y_{(-i)}(u,\,w,\,a)=\llbracket X(e_{-i},\,u,\,0),\,Y(e_i,\,w\eps{-i},\,a)]Y(e_i,\,ua\eps i,\,0),$$
then,
\begin{multline*}
\!\,^{X(e_i,\,v,\,0)}Y(u,\,w,\,a)=\\
=\!\,^{X(e_i,\,v,\,0)}\llbracket X(e_{-i},\,u,\,0),\,Y(e_i,\,w\eps{-i},\,a)]Y(e_i,\,ua\eps i,\,0)=\\
=\llbracket X(e_{-i}+v\eps i,\,u,\,0),\,Y(e_i,\,w\eps{-i},\,a)]Y(e_i,\,ua\eps i,\,0)=\\
=\llbracket X(e_{-i},\,u,\,0)X(v\eps i,\,u,\,0),\,Y(e_i,\,w\eps{-i},\,a)]Y(e_i,\,ua\eps i,\,0)=\\
=\!\,^{X(e_{-i},\,u,\,0)}\llbracket X(v\eps i,\,u,\,0),\,Y(e_i,\,w\eps{-i},\,a)]\cdot\\
\cdot\llbracket X(e_{-i},\,u,\,0),\,Y(e_i,\,w\eps{-i},\,a)]Y(e_i,\,ua\eps i,\,0)=\\
=\!\,^{X(e_{-i},\,u,\,0)}\llbracket X(v\eps i,\,u,\,0),\,Y(e_i,\,w\eps{-i},\,a)]Y(u,\,w,\,a).
\end{multline*}
Finally, notice that $X_{(k)}(v\eps i,\,u,\,0)\in\ls i$ commutes with $Y(e_i,\,w\eps{-i},\,a)$.
\end{proof}

\section{Case of maximal form parameter}

In this section we assume that $\Gamma=I$.

\begin{lm}
\label{short-symmetry}
For $k$, $j\not\in\{\pm i\}$, $u$, $v\in V$ such that $u_i=u_{-i}=u_{j}=u_{-j}=0$ and $v_i=v_{-i}=v_{k}=v_{-k}=0$, $\lan u,\,v\ran=0$, and $a\in I$, one has
$$
Y(u,\,va,\,0)=Y(v,\,ua,\,0).
$$
\end{lm}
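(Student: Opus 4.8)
The plan is to reduce the identity to the switch lemma (Lemma~\ref{switch}) for pairs of basis vectors, using that the two sides have the same image under $\varphi$. Since $u_i=u_{-i}=v_i=v_{-i}=0$, both $Y(u,\,va,\,0)$ and $Y(v,\,ua,\,0)$ are instances of $Y_{(i)}(\,\cdot\,,\,\cdot\,,0)$, and Lemma~\ref{esd-properties} gives $\varphi(Y(u,\,va,\,0))=T(u,\,va,\,0)=T(v,\,ua,\,0)=\varphi(Y(v,\,ua,\,0))$, where the middle equality is property (3) of the ESD-transformations. So the content of the lemma is that this coincidence of images already forces equality in $\St\!\Sp_{2l}(R,\,I)$, and the strategy is to expand both sides into a common normal form built from short-root generators $Y(e_h,\,e_mc,\,0)$, for which symmetry is precisely Lemma~\ref{switch}.

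The expansion proceeds in two stages. First I would write $va=\sum_m e_m v_m a$ (the sum over $m\notin\{\pm i,\pm k\}$) and split the second argument of $Y(u,\,va,\,0)$ into these basis components by iterating Lemma~\ref{z-additivity}; each splitting contributes a long-root correction, and I collect these using Lemma~\ref{long-add} together with the centrality of the long-root generators (Lemma~\ref{heis}). This yields $Y(u,\,va,\,0)$ as an ordered product of the factors $Y(u,\,e_m v_m a,\,0)$ times a product of terms $Y(u,\,0,\,\beta)$ with $\beta\in\Gamma=I$. Next I transpose each factor via the identity $Y_{(i)}(u,\,e_m b,\,0)=Y(e_m,\,ub,\,0)$ established above, obtaining $Y(e_m,\,u v_m a,\,0)$, and then repeat the splitting–transposition on the second argument $u v_m a=\sum_h e_h u_h v_m a$, this time landing on $Y(e_m,\,e_h u_h v_m a,\,0)$. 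Applying Lemma~\ref{switch} turns each of these into $Y(e_h,\,e_m u_h v_m a,\,0)$. Running the identical procedure on $Y(v,\,ua,\,0)$ produces the same short-root atoms $Y(e_h,\,e_m u_h v_m a,\,0)$, since $v_m u_h a=u_h v_m a$ in the commutative ring $R$; the two sides can therefore differ only in the ordering of these atoms and in the accumulated long-root corrections.

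The main obstacle is the bookkeeping of these discrepancies, which is where the hypothesis $\lan u,\,v\ran=0$ enters. The transposition identity requires $u_{-m}=0$, so the overlapping-support terms (those $m$ with $u_{-m}\neq0$) are not directly transposable; for these one must first reorganize the products using the Steinberg commutator relations, whose cross-commutators among the $Y(e_h,\,e_m c,\,0)$ produce further short-root and long-root terms. I expect that the short-root cross-terms arising on the two sides match atom-by-atom once everything is brought to a fixed index order, while the leftover long-root part collapses to a single product $\prod_p Y_{p,-p}(\delta_p)$; the relation $\sum_h u_h v_{-h}\eps h=0$ coming from $\lan u,\,v\ran=0$ is exactly what is needed to see that the diagonal exponents $\delta_p$ cancel. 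To finish, I would observe that the residual discrepancy $Y(u,\,va,\,0)\,Y(v,\,ua,\,0)\inv$ is then a product of long-root generators lying in the unipotent radicals $\ur p$ on which $\varphi$ is injective (Lemma~\ref{unirad}); since its image under $\varphi$ is trivial by Lemma~\ref{esd-properties}(3), each $\delta_p$ must vanish and the discrepancy is trivial, giving $Y(u,\,va,\,0)=Y(v,\,ua,\,0)$.
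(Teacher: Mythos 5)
Your strategy is genuinely different from the paper's, but as written it has real gaps. The paper's proof is short: applying Lemma~\ref{short-is-three-long} with $w=0$ gives
$Y(v,\,ua,\,0)=Y(v,\,0,\,-a)Y(u,\,0,\,-a)Y_{(i)}(u+v,\,0,\,a)$ and, by symmetry of the hypotheses, $Y(u,\,va,\,0)=Y(u,\,0,\,-a)Y(v,\,0,\,-a)Y_{(i)}(v+u,\,0,\,a)$; since the two long-root factors commute ($X_{(j)}(u,\,0,\,-a)\in\ls i$ acts trivially on $Y_{(i)}(v,\,0,\,-a)$), the two sides coincide. The symmetry is thus manifest from a single decomposition of a short-root element into three long-root elements, and no coordinate bookkeeping is needed.

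Your reduction to basis atoms breaks down at two points. First, the splitting step itself is unavailable when the supports of $u$ and $v$ overlap: Lemma~\ref{z-additivity} requires $u_j=u_{-j}=0$ for the index $j$ being split off, and the hypotheses of the lemma permit, say, $j=k$ with both $u$ and $v$ supported on the same remaining pairs of indices (already for $l=3$, e.g.\ $u=v=e_3$, $i=1$, $j=k=2$). Your proposed remedy addresses only the transposition identity $Y_{(i)}(u,\,e_mb,\,0)=Y(e_m,\,ub,\,0)$, not the failure of the splitting, so the very first stage of the plan cannot be carried out in general. Second, the heart of the argument --- that after reordering, the short-root cross-terms produced by the commutator relations match atom-by-atom on the two sides and the residue collapses to $\prod_pY_{p,-p}(\delta_p)$ with $\delta_p$ cancelling via $\lan u,\,v\ran=0$ --- is only asserted (``I expect that\ldots''); this is exactly where all the difficulty of such a computation sits, and the quadratic cross-terms $u_hu_{h'}v_mv_{m'}a^2$ arising from KL-type commutators make it far from routine. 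A smaller point: Lemma~\ref{unirad} gives injectivity of $\varphi$ on a single $\ur p$, whereas your residue lives in a product over distinct $p$; the conclusion can be salvaged by reading off each $\delta_p$ from the matrix $\varphi$-image and invoking $Y_{p,-p}(0)=1$, but not by citing that lemma as stated. I would recommend abandoning the coordinate expansion and using the long-root decomposition of Lemma~\ref{short-is-three-long} directly.
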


\begin{proof}
By Lemma~\ref{short-is-three-long} one has
$$
Y(v,\,ua,\,0)=Y(v,\,0,\,-a)Y(u,\,0,\,-a)Y_{(i)}(u+v,\,0,\,a)
$$
and similarly
$$
Y(u,\,va,\,0)=Y(u,\,0,\,-a)Y(v,\,0,\,-a)Y_{(i)}(v+u,\,0,\,a).
$$
But $X_{(j)}(u,\,0,\,-a)\in\ls i$ acts trivially on $Y_{(i)}(v,\,0,\,-a)$.
\end{proof}

\begin{lm}
\label{correctness}
For $u\in V$, $a\in I$ and $j\not\in\{\pm i\}$
$$
Z_{(i)}(u,\,0,\,a)=Z_{(j)}(u,\,0,\,a).
$$
\end{lm}

\begin{proof}
Set 
$$v=e_iu_i+e_{-i}u_{-i},\quad v'=e_ju_j+e_{-j}u_{-j},\quad \tilde u=u-v,\quad \tilde{\tilde u}=\tilde u-v'.$$ 
Lemmas~\ref{w=zz} and \ref{short-is-three-long} imply that
\begin{multline*}
Z_{(i)}(u,\,0,\,a)=Y_{(i)}(\tilde u,\,0,\,a)Y(v,\,0,\,a)Y(v,\,\tilde ua,\,0)=\\
=Y_{(i)}(\tilde u,\,0,\,a)Y(v,\,0,\,a)Y(v,\,\tilde{\tilde u}a,\,0)Y(v,\,v'a,\,0)=\\
=Y(\tilde{\tilde u},\,0,\,a)Y(v',\,0,\,a)Y(v',\,\tilde{\tilde u}a,\,0)Y(v,\,0,\,a)Y(v,\,\tilde{\tilde u}a,\,0)Y(v,\,v'a,\,0).
\end{multline*}
Interchanging roles of $i$ and $j$ one has
\begin{multline*}
Z_{(j)}(u,\,0,\,a)=\\
=Y(\tilde{\tilde u},\,0,\,a)Y(v,\,0,\,a)Y(v,\,\tilde{\tilde u}a,\,0)Y(v',\,0,\,a)Y(v',\,\tilde{\tilde u}a,\,0)Y(v',\,va,\,0).
\end{multline*}
By Lemma~\ref{short-symmetry} one has $Y(v,\,v'a,\,0)=Y(v',\,va,\,0)$. Now we have to show that $Y(v,\,0,\,a)Y(v,\,\tilde{\tilde u}a,\,0)$ commutes with $Y(v',\,0,\,a)Y(v',\,\tilde{\tilde u}a,\,0)$. With this end fix $k\not\in\{\pm i,\pm j\}$. Observe that $Y_{(i)}(v',\,0,\,a)\in\ls k$ commutes with $Y_{(k)}(v,\,\tilde{\tilde u}a,\,0)$ by Lemma~\ref{w-conjugation}. Next, we will show that
$$
[Y(v,\,\tilde{\tilde u}a,\,0),\,Y(v',\,\tilde{\tilde u}a,\,0)]=1.
$$
By Lemma~\ref{short-symmetry} one has
$$
Y(v,\,\tilde{\tilde u}a,\,0)=Y(\tilde{\tilde u},\,va,\,0)
$$
and
$$
Y(v',\,\tilde{\tilde{u}}a,\,0)=Y(\tilde{\tilde{u}},\,v'a,\,0).
$$
Now, Lemma~\ref{w=zz} implies that
\begin{multline*}
Y(v,\,{\tilde{\tilde u}}a,\,0)Y(v',\,{\tilde{\tilde u}}a,\,0)=Y({\tilde{\tilde u}},\,va,\,0)Y({\tilde{\tilde u}},\,v'a,\,0)=\\
=Y(\tilde{{\tilde u}},\,va+v'a,\,0)=Y(\tilde{{\tilde u}},\,v'a,\,0)Y(\tilde{\tilde{u}},\,va,\,0)=\\
=Y(v',\,\tilde{\tilde{u}}a,\,0)Y(v,\,\tilde{{\tilde u}}a,\,0).
\end{multline*}
Finally, it remains to obseve that $Y_{(j)}(v,\,0,\,a)\in\ls k$ commutes with both $Y_{(k)}(v',\,0,\,a)$ and $Y_{(k)}(v',\,\tilde{\tilde u}a,\,0)$ by Lemma~\ref{w-conjugation}.
\end{proof}

\begin{rk}
Since $Z_{(i)}(u,\,0,\,a)$ does not depend on the choice of $i$ we will often omit the index in the notation
$$
Z(u,\,0,\,a)=Z_{(i)}(u,\,0,\,a).
$$
\end{rk}

Our next objective is to prove the following formula describing the action of $\St\!\Sp_{2l}(R)$ on the long-root type generators, namely
$$\!\,^gZ(u,\,0,\,a)=Z(\phi(g)u,\,0,\,a).$$ 

\begin{lm}
\label{conjugation-by-long}
For any $u\in V$, any $a\in\Gamma$, $b\in R$ and any index $i$, one has
$$
\!\,^{X_{i,-i}(b)}Z(u,\,0,\,a)=Z(T_{i,-i}(b)u,\,0,\,a).
$$
\end{lm}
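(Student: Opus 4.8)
The plan is to use the index-independence $Z(u,\,0,\,a)=Z_{(j)}(u,\,0,\,a)$ from Lemma~\ref{correctness}, choosing a fixed index $j\not\in\{\pm i\}$, and to reduce the whole computation to the covariance statement of Lemma~\ref{w-conjugation}. The key observation is that $g:=X_{i,-i}(b)$ belongs to $\ls k$ for \emph{every} index $k\not\in\{\pm i\}$, and that $\phi(g)=T_{i,-i}(b)$ acts on a vector $w$ by $w\mapsto w+e_ib\eps i w_{-i}$, altering only its $i$-th coordinate. Writing $v=e_ju_j+e_{-j}u_{-j}$ and $\tilde u=u-v$, this immediately gives $\phi(g)v=v$ (because $v_{-i}=0$ as $j\neq\pm i$) and $\phi(g)\tilde u=\tilde u+e_ib\eps i u_{-i}$.

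First I would write out, directly from the definition of $Z_{(j)}$ with second argument $0$,
$$Z_{(j)}(u,\,0,\,a)=Y_{(j)}(\tilde u,\,0,\,a)\,Y(v,\,0,\,a)\,Y(v,\,\tilde ua,\,0),$$
and conjugate the three factors by $g$ one at a time, using that conjugation is an automorphism. The first factor meets the hypotheses of Lemma~\ref{w-conjugation} with distinguished index $j$, since $\tilde u_j=\tilde u_{-j}=0$ and $g\in\ls j$, so $\!\,^{g}Y_{(j)}(\tilde u,\,0,\,a)=Y_{(j)}(\phi(g)\tilde u,\,0,\,a)$. For the other two factors the first argument $v$ has nonzero $j$- and $(-j)$-coordinates, so Lemma~\ref{w-conjugation} is \emph{not} applicable with index $j$; instead I would pick an auxiliary index $k\not\in\{\pm i,\pm j\}$ (which exists because $l\geq3$), observe that $v_k=v_{-k}=0$ and $g\in\ls k$, and read $Y(v,\,\cdot\,,\,\cdot)$ as $Y_{(k)}(v,\,\cdot\,,\,\cdot)$. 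Lemma~\ref{w-conjugation} with index $k$ together with $\phi(g)v=v$ and the $R$-linearity of $\phi(g)$ then gives $\!\,^{g}Y(v,\,0,\,a)=Y(v,\,0,\,a)$ and $\!\,^{g}Y(v,\,\tilde ua,\,0)=Y(v,\,(\phi(g)\tilde u)a,\,0)$.

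Finally I would recognise the resulting product as $Z_{(j)}$ of the transformed vector. Put $u^*=T_{i,-i}(b)u=\phi(g)u$; then $u^*_j=u_j$ and $u^*_{-j}=u_{-j}$, so the $j$-plane part of $u^*$ is again $v$, while $\widetilde{u^*}=u^*-v=\phi(g)(u-v)=\phi(g)\tilde u$ (using $\phi(g)v=v$). Hence the three conjugated factors are precisely $Y_{(j)}(\widetilde{u^*},\,0,\,a)$, $Y(v,\,0,\,a)$ and $Y(v,\,\widetilde{u^*}a,\,0)$, whose product is $Z_{(j)}(u^*,\,0,\,a)$ by the definition of $Z_{(j)}$. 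Applying Lemma~\ref{correctness} once more to drop the index yields $\!\,^{X_{i,-i}(b)}Z(u,\,0,\,a)=Z(T_{i,-i}(b)u,\,0,\,a)$.

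I expect the main obstacle to be purely organisational: ensuring that Lemma~\ref{w-conjugation} is legitimately invoked for each factor, which forces the switch of the distinguished index from $j$ to the auxiliary $k$ for the two $v$-factors — and it is exactly this need for a spare index that uses the hypothesis $l\geq3$ — together with checking the $\Gamma$-membership conditions guaranteeing that every intermediate element $Y_{(k)}(v,\,\tilde ua,\,0)$ and $Z_{(j)}(u^*,\,0,\,a)$ is well-defined. The conceptual point, that conjugation by a long-root element only shifts the $i$-th coordinate and is absorbed entirely into the transvection-type first factor of $Z_{(j)}$, is immediate once the decomposition is in place.
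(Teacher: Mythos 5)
Your argument is correct. The paper itself gives no written proof here (it just refers to Lemma~26 of the cited ``Another presentation'' paper), and what you have written is exactly the natural in-house argument: decompose $Z_{(j)}(u,\,0,\,a)$ for a fixed $j\not\in\{\pm i\}$ into its three factors, note that $X_{i,-i}(b)$ lies in the Levi subgroup for every index other than $\pm i$, and absorb the conjugation via Lemma~\ref{w-conjugation}, switching to a spare index $k\not\in\{\pm i,\pm j\}$ for the two factors whose first argument is $v=e_ju_j+e_{-j}u_{-j}$. All the side conditions you flag do check out: $\lan v,\,\tilde u\ran=0$, $\phi(g)v=v$ since $v_{-i}=0$, the $\Gamma$-membership $-a^2\lan\tilde u_-,\,\tilde u_+\ran\in\Gamma$ follows from property~(b) of a form parameter, and the reassembled product is $Z_{(j)}(\phi(g)u,\,0,\,a)$ because $\phi(g)$ fixes the $\pm j$-coordinates. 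So your proof is a complete and correct substitute for the external reference.
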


\begin{proof}
The proof of Lemma~26 from the Another Presentation paper can be repeated verbatim.
\end{proof}

\begin{lm}
\label{lm0}
For $u\in V$, $v$, $w\in I^{2l}$ and $j\not\in\{\pm i\}$ such that 
$$u_i=u_{-i}=u_j=u_{-j}=0,\qquad v_j=v_{-j}=0,\qquad w_j=w_{-j}=0,$$
 and $\lan u,\,v\ran=0$, $\lan u,\,w\ran=0$, $\lan v,\,w\ran=0$, one has
$$
Y(u,\,v,\,0)Y(u,\,w,\,0)=Y(u,\,v+w,\,0).
$$
\end{lm}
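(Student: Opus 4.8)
The plan is to express all three factors through one commutator and reduce the identity to a commutation relation. Since $u_j=u_{-j}=0$ and $v_j=v_{-j}=w_j=w_{-j}=0$, each of $Y(u,\,v,\,0)$, $Y(u,\,w,\,0)$, $Y(u,\,v+w,\,0)$ coincides with the corresponding $Y_{(j)}(u,\,\cdot\,,0)=\llbracket X(e_j,\,u,\,0),\,Y(e_{-j},\,\cdot\,\eps j,\,0)]$ (the trailing factor drops out because the scalar is $0$, and the index $j$ is admissible since $u$ has its two pairs of zeros at $\pm i,\,\pm j$). First I would apply Lemma~\ref{y-add}: as $\lan v,\,w\ran=0$ one has $Y(e_{-j},\,(v+w)\eps j,\,0)=Y(e_{-j},\,v\eps j,\,0)\,Y(e_{-j},\,w\eps j,\,0)$, and expanding the commutator via $\llbracket g,\,h_1h_2]=\llbracket g,\,h_1]\cdot\!\,^{h_1}\llbracket g,\,h_2]$ gives
$$Y(u,\,v+w,\,0)=Y(u,\,v,\,0)\cdot\!\,^{Y(e_{-j},\,v\eps j,\,0)}Y(u,\,w,\,0).$$
Hence everything reduces to showing that $Y(e_{-j},\,v\eps j,\,0)$ commutes with $Y(u,\,w,\,0)$.

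Next I would replace the relative conjugator by an absolute one. Decomposing $Y(e_{-j},\,v\eps j,\,0)$ and $X(e_{-j},\,v\eps j,\,0)$ into matching elementary factors (Lemma~\ref{y-decomposition} and Lemma~\ref{unipotent-decomposition}) and applying KL7 to the elementary constituents of $Y(u,\,w,\,0)$ one obtains $\!\,^{Y(e_{-j},\,v\eps j,\,0)}Y(u,\,w,\,0)=\!\,^{X(e_{-j},\,v\eps j,\,0)}Y(u,\,w,\,0)$. Put $p=X(e_{-j},\,v\eps j,\,0)$, $\alpha=X(e_j,\,u,\,0)$, $\beta=Y(e_{-j},\,w\eps j,\,0)$, so that $Y(u,\,w,\,0)=\llbracket\alpha,\,\beta]$ and the desired commutator equals $\llbracket p,\,\llbracket\alpha,\,\beta]]$, precisely the shape treated by the Hall--Witt identity used in Lemma~\ref{z-correctness}:
$$\llbracket p,\,\llbracket\alpha,\,\beta]]=\!\,^{\alpha p}\llbracket[p\inv,\,\alpha\inv],\,\beta]\cdot\!\,^{\alpha\beta}[[\beta\inv,\,p\rrbracket,\,\alpha\inv\rrbracket.$$
As there, one first notes $p\in\ps{-j}$ (cf. Lemma~\ref{ppc}); since $\lan v,\,w\ran=0$ and $w_j=0$ force $T(e_{-j},\,v\eps j,\,0)$ to fix $w\eps j$, Lemma~\ref{y-conjugated-by-ps} gives $\!\,^p\beta=\beta$. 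Therefore $[\beta\inv,\,p\rrbracket=\beta\inv\cdot\!\,^p\beta=1$ and the second Hall--Witt term is trivial.

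The first term is the heart of the matter. The absolute commutator is $[\alpha,\,p]=\llbracket X(e_j,\,u,\,0),\,X(e_{-j},\,v\eps j,\,0)]=X_{(j)}(u,\,v,\,0)=X(u,\,v,\,0)$ by the absolute instance of the definition of $Y_{(j)}$, so $[p,\,\alpha]=X(u,\,-v,\,0)$ and, using $[p\inv,\,\alpha\inv]=\!\,^{p\inv\alpha\inv}[p,\,\alpha]$, we get $[p\inv,\,\alpha\inv]=\!\,^{p\inv\alpha\inv}X(u,\,-v,\,0)$. Now $\alpha=X(e_j,\,u,\,0)\in\ls i$ and $X(u,\,-v,\,0)=X_{(i)}(u,\,-v,\,0)$, while $\phi(\alpha\inv)=T(e_j,\,-u,\,0)$ fixes both $u$ and $-v$; so Lemma~\ref{w-conjugation} gives $\!\,^{\alpha\inv}X(u,\,-v,\,0)=X(u,\,-v,\,0)$, whence $[p\inv,\,\alpha\inv]=\!\,^{p\inv}X(u,\,-v,\,0)$. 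Finally $X(u,\,-v,\,0)=X_{(i)}(u,\,-v,\,0)\in\ls j\subseteq\ps{-j}$ by the remark on absolute generators (here $u_i=u_{-i}=u_j=u_{-j}=0$ and $(-v)_j=(-v)_{-j}=0$), and $T(u,\,-v,\,0)$ fixes $w\eps j$ because $\lan u,\,w\ran=\lan v,\,w\ran=0$; so Lemma~\ref{y-conjugated-by-ps} yields $\!\,^{X(u,\,-v,\,0)}\beta=\beta$. Combining this with $\!\,^p\beta=\beta$,
$$\!\,^{[p\inv,\,\alpha\inv]}\beta=\!\,^{p\inv}\big(\!\,^{X(u,\,-v,\,0)}(\!\,^p\beta)\big)=\!\,^{p\inv}\beta=\beta,$$
so $\llbracket[p\inv,\,\alpha\inv],\,\beta]=1$ and the first term vanishes as well. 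Thus $Y(e_{-j},\,v\eps j,\,0)$ commutes with $Y(u,\,w,\,0)$, and the lemma follows from the first paragraph.

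The main obstacle is exactly this first Hall--Witt term. The point is that neither $p$ nor $\alpha$ lies in a single Levi or parabolic adapted to $\beta$, and the generality of $v$ (which is only assumed to vanish at $\pm j$) blocks any attempt to split it coordinatewise. The way around it is to recognise the absolute commutator $[\alpha,\,p]$ as the Levi-type generator $X(u,\,v,\,0)\in\ls j$, and then to strip the outer conjugation by $\alpha\inv$ using $\alpha\in\ls i$, so that what is left acting on $\beta$ are only conjugations by $p$ and by $X(u,\,-v,\,0)$, both in $\ps{-j}$ and both given by ESD-transformations fixing $w\eps j$; it is precisely the orthogonality hypotheses $\lan u,\,v\ran=\lan u,\,w\ran=\lan v,\,w\ran=0$ that make these transformations act trivially.
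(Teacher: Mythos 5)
Your proof is correct and is, in substance, the argument the paper intends: for this lemma the paper only instructs the reader to repeat the proof of Lemma~27 of~\cite{L3} verbatim, and your write-up is exactly that scheme carried out in the relative setting --- split the right-hand side with Lemma~\ref{y-add} and $\llbracket g,\,h_1h_2]=\llbracket g,\,h_1]\cdot\!\,^{h_1}\llbracket g,\,h_2]$, then kill the remaining conjugation by the same Hall--Witt computation as in Lemma~\ref{z-correctness}, with the orthogonality hypotheses entering through Lemmas~\ref{y-conjugated-by-ps} and~\ref{w-conjugation}. The one step you should justify more carefully is the replacement of conjugation by $Y(e_{-j},\,v\eps j,\,0)$ with the action of $X(e_{-j},\,v\eps j,\,0)$: KL7 is stated only for plain generators $Y_{hk}(b)$, while $Y(u,\,w,\,0)$ is a product of \emph{conjugates} of generators, so one should observe that conjugation by a product of plain generators and the action of the corresponding absolute element are two automorphisms of $\St\!\Sp_{2l}(R,\,I)$ that agree on the generating set $\{\!\,^gY_{hk}(b)\}$ (by KL7 and induction on the length of $g$), hence agree everywhere; this is the same silent move the paper itself makes, e.g.\ in the proof of Lemma~\ref{z-additivity}.
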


\begin{proof}
The proof of Lemma~27 from the Another Presentation paper can be repeated verbatim.
\end{proof}

\begin{lm}
\label{w-symmetry}
Let $\Card\{\pm i,\pm j,\pm k\}=6$ and let $v\in I^{2l}$, $v'\in V$ be vectors having only $\pm i$-th and $\pm j$-th non-zero coordinates respectively; consider also $v''\in V$ such that $(v'')_i=(v'')_{-i}=(v'')_j=(v'')_{-j}=0$. Set $w=v'+v''$. Then
$$Y(v'',\,v,\,0)Y(v',\,v,\,0)=Y_{(i)}(w,\,v,\,0).$$
\end{lm}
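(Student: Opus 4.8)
The identity is precisely the $a=0$ instance of Lemma~\ref{short-is-three-long}, the only discrepancy being that the common second argument $v$ is supported on the $\pm i$-coordinates rather than avoiding them. The plan is therefore to split off this $\pm i$-part by the extended definition of $Y_{(i)}$ and then reduce to Lemma~\ref{short-is-three-long}. Write $v=e_iv_i+e_{-i}v_{-i}$ and set $b=-v_iv_{-i}\eps i$; since $\Gamma=I$ and $v_i,v_{-i}\in I$ we have $b\in\Gamma$, and disjointness of supports gives $\lan v',\,v''\ran=\lan v',\,v\ran=\lan v'',\,v\ran=0$. Since $v''$ and $v'$ both have $\pm i$ among their zero pairs, $Y(v'',\,v,\,0)=Y_{(i)}(v'',\,v,\,0)$ and $Y(v',\,v,\,0)=Y_{(i)}(v',\,v,\,0)$, and for each $u\in\{v'',\,v',\,w\}$ the extended definition collapses (its non-$\pm i$ part vanishing) to
\[
Y_{(i)}(u,\,v,\,0)=Z(u,\,0,\,b)\,Y(e_i,\,uv_i,\,0)\,Y(e_{-i},\,uv_{-i},\,0),
\]
using $Y_{(i)}(u,\,0,\,b)=Z(u,\,0,\,b)$ (definition of $Z$ together with Lemma~\ref{correctness}). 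In particular the right-hand side $Y_{(i)}(w,\,v,\,0)$ equals $Z(w,\,0,\,b)\,Y(e_i,\,wv_i,\,0)\,Y(e_{-i},\,wv_{-i},\,0)$.

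For the long-root factors I would apply Lemma~\ref{short-is-three-long} with trivial second argument (and $a=b$) to the splitting $w=v''+v'$; here the hypothesis $\Card\{\pm i,\pm j,\pm k\}=6$ is exactly what lets $v''$ play the role of the $\pm j$-avoiding summand and $v'$ that of the $\pm k$-avoiding one. This yields
\[
Z(w,\,0,\,b)=Z(v'',\,0,\,b)\,Z(v',\,0,\,b)\,Y(v',\,v''b,\,0).
\]

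Abbreviate $A_1=Y(e_i,\,v''v_i,\,0)$, $B_1=Y(e_{-i},\,v''v_{-i},\,0)$, $A_2=Y(e_i,\,v'v_i,\,0)$, $B_2=Y(e_{-i},\,v'v_{-i},\,0)$, so that the left-hand side is $Z(v'',\,0,\,b)\,A_1B_1\,Z(v',\,0,\,b)\,A_2B_2$. Fixing $k\not\in\{\pm i,\pm j\}$ one has $Z(v',\,0,\,b)=Y_{(i)}(v',\,0,\,b)\in\ls k$, and, via Lemma~\ref{w-conjugation} combined with relation KL7 (replacing conjugation by the relative element $Z(v',\,0,\,b)$ with conjugation by its absolute lift in $\ls k$), it commutes with all four factors, because its action is that of $T(v',\,0,\,b)$, which fixes $e_{\pm i}$ and the vectors $v''v_{\pm i}$, $v'v_{\pm i}$ (as $\lan v',\,v''\ran=\lan v',\,v'\ran=0$). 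Moving it to the left and combining the two $e_i$-factors and the two $e_{-i}$-factors by Lemma~\ref{y-add} (the corrections $v_{\pm i}^2\lan v'',\,v'\ran$ vanishing) gives $A_1A_2=Y(e_i,\,wv_i,\,0)$ and $B_1B_2=Y(e_{-i},\,wv_{-i},\,0)$. The elementary rearrangement $A_1B_1A_2B_2={}^{A_1}[B_1,\,A_2]\,A_1A_2B_1B_2$ then reduces the whole claim, after cancelling $Z(v'',\,0,\,b)Z(v',\,0,\,b)$ on the left and $Y(e_i,\,wv_i,\,0)Y(e_{-i},\,wv_{-i},\,0)$ on the right, to the single relation
\[
{}^{A_1}[B_1,\,A_2]=Y(v',\,v''b,\,0).
\]

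The main obstacle is this last commutator identity. Since $A_1$ and $A_2$ commute (both lie in $Y(e_i,\,\cdot,\,0)$ with $\lan v''v_i,\,v'v_i\ran=0$), one has ${}^{A_1}[B_1,\,A_2]=[{}^{A_1}B_1,\,A_2]$, and I would evaluate the commutator by rewriting each relative conjugation as an absolute one through relation KL7, turning it into an expression of the form $\llbracket X(e_i,\,v'v_i,\,0),\,Y(e_{-i},\,v''v_{-i},\,0)]$, which Lemma~\ref{ppc} computes after decomposing $v'v_i$ along $e_{\pm j}$. The delicate points are the scalar bookkeeping $v_iv_{-i}\eps{-i}=b$ and, via $T(uc,\,v,\,0)=T(u,\,vc,\,0)$ together with the symmetry of Lemma~\ref{esd-properties}, the identification $T(v''v_{-i},\,v'v_i\eps{-i},\,0)=T(v'',\,v'b,\,0)=T(v',\,v''b,\,0)$, which show that the computed commutator is indeed $Y(v',\,v''b,\,0)$; tracking the action of the outer $A_1$ along the way (once more by KL7 and Lemma~\ref{w-conjugation}, as $T(e_i,\,v''v_i,\,0)$ fixes $v'$ and $v''b$) is the most error-prone part of the argument.
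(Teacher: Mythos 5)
The paper itself offers no argument here beyond a pointer to Lemma~28 of~\cite{L3}, so I can only judge your derivation on its own terms; as a self-contained deduction from the lemmas of the present paper it is a legitimate route, and your reduction is correct: peeling off the $\pm i$-part of $v$ via the extended definition (with $b=-v_iv_{-i}\eps i$), expanding $Y_{(i)}(w,0,b)$ by Lemma~\ref{short-is-three-long}, moving $Y(v',0,b)$ past $A_1B_1$, and merging $A_1A_2$ and $B_1B_2$ by Lemma~\ref{y-add} does reduce the claim to the single identity ${}^{A_1}[B_1,\,A_2]=Y(v',\,v''b,\,0)$, and each of those steps is covered by the lemmas you cite.

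The weak point is exactly the step you flag, and as written it does not close. First, the identification of $[B_1,\,A_2]$ with $Y(v',\,v''b,\,0)$ cannot rest on $T(uc,\,v,\,0)=T(u,\,vc,\,0)$ and Lemma~\ref{esd-properties}: those are identities in $\Sp_{2l}(R)$, where the whole lemma is vacuous. In the Steinberg group the transfer of the scalar between the two slots together with the swap is precisely Lemma~\ref{short-symmetry}; note that Lemma~\ref{z-correctness} does \emph{not} apply here, since $v''$ and $v'$ have only the pair $\pm i$ of common zeros. Concretely, $[B_1,\,A_2]=\llbracket X(e_{-i},\,v''v_{-i},\,0),\,Y(e_i,\,v'v_i,\,0)]=Y_{(-i)}(v''v_{-i},\,v'v_i\eps{-i},\,0)$ by the definition of $Y_{(-i)}$ with $a=0$, and then Lemma~\ref{short-symmetry} (applied with first vector $v''v_{-i}$, second vector $v'$ and scalar $v_i\eps{-i}\in I$) yields $Y(v',\,v''b,\,0)$. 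Second, for the invariance under $A_1$: that $T(e_i,\,v''v_i,\,0)$ fixes $v'$ and $v''b$ is necessary but not sufficient, because Lemma~\ref{w-conjugation} requires $X(e_i,\,v''v_i,\,0)\in\ls m$ for the \emph{same} index $m$ used to present the conjugated element, and $X(e_i,\,v''v_i,\,0)$ lies only in $\ls j$, whereas $Y_{(m)}(v',\,\cdot,\,\cdot)$ is defined only for $m\not\in\{\pm j\}$. One must therefore keep a multiple of $v''$ in the first slot, e.g. present the commutator as $Y_{(j)}(v''v_{-i},\,v'v_i\eps{-i},\,0)$ (legitimate by Lemma~\ref{w-correctness}, since $v''v_{-i}$ vanishes at both $\pm i$ and $\pm j$), after which Lemma~\ref{w-conjugation} applies and gives triviality. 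With these two repairs — both available from the paper's toolkit — the argument is complete.
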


\begin{proof}
The proof of Lemma~28 from the Another Presentation paper can be repeated verbatim.
\end{proof}

\begin{lm}
\label{conjugation-by-short}
For any $j\not\in\{\pm k\}$, any $u\in V$ and any $a\in I$, $b\in R$, one has
$$
\!\,^{X_{jk}(b)}Z(u,\,0,\,a)=Z(T_{jk}(b)u,\,0,\,a).
$$
\end{lm}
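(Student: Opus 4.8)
I would reduce the statement to the case already handled by Lemma~\ref{conjugation-by-long}, exploiting the fact that a short-root transvection $X_{jk}(b)$ with $j\not\in\{\pm k\}$ can be written as a commutator of two long-root elements via the Steinberg relation S4. Specifically, since $[X_{i,-i}(r),\,X_{-i,j}(s)]$ produces a short-root generator (relation S4 gives $X_{ij}(rs\eps i)X_{-j,j}(-rs^2)$), the short transvection sits inside the subgroup generated by long transvections together with the known conjugation formula, so I expect to bootstrap from the long-root case.

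First I would fix an auxiliary index $i\not\in\{\pm j,\pm k\}$, which exists since $l\geq3$ and $\Card\{\pm j,\pm k\}=4$ leaves room. The idea is to express $X_{jk}(b)$ (up to the symplectic symmetry $X_{jk}(b)=X_{-k,-j}(-b\eps j\eps k)$) as a product or commutator of elementary transvections each of which either is long-root or acts on $Z(u,\,0,\,a)$ in a way already controlled by earlier lemmas. The cleanest route is to use the conjugation compatibility built into the definition of the $Y$- and $Z$-generators: Lemma~\ref{w-conjugation} already gives the $\ls i$-conjugation formula $\!\,^gY_{(i)}(u,\,v,\,a)=Y_{(i)}(\phi(g)u,\,\phi(g)v,\,a)$ for $g\in\ls i$, and $X_{jk}(b)\in\ls i$ precisely when $i\not\in\{\pm j,\pm k\}$. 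Writing $Z(u,\,0,\,a)=Z_{(i)}(u,\,0,\,a)$ via its defining decomposition into $Y_{(i)}$- and $Y$-terms, I would then push the conjugation through each factor using Lemma~\ref{w-conjugation}, since every factor in the corollary's decomposition of $Z_{(i)}$ lies in a group on which $X_{jk}(b)$ acts by the Levi formula.

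The key computation is therefore to verify that conjugating the decomposed form $Z_{(i)}(u,\,0,\,a)=Y_{(i)}(\tilde u,\,0,\,a)Y(v,\,0,\,a)Y(v,\,\tilde ua,\,0)$ (with $v=e_iu_i+e_{-i}u_{-i}$, $\tilde u=u-v$) by $X_{jk}(b)$ transforms each of $\tilde u$, $v$ termwise by $\phi(X_{jk}(b))=T_{jk}(b)$, and that reassembling gives exactly $Z_{(i)}(T_{jk}(b)u,\,0,\,a)=Z(T_{jk}(b)u,\,0,\,a)$. Here I would use that $T_{jk}(b)$ fixes $e_i$ and $e_{-i}$ (so the $\pm i$-components split off cleanly and $v$ is preserved), and apply Lemma~\ref{w-conjugation} to the $Y_{(i)}(\tilde u,\,0,\,a)$ factor and the earlier $\ls i$-conjugation lemma to the $Y(v,\,\cdots)$ factors. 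The independence of $Z$ from the index (Lemma~\ref{correctness}) guarantees the answer is coherent.

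The main obstacle will be the case where $j$ or $k$ coincides with $\pm i$ is \emph{not} excludable for \emph{every} choice of $i$, but since $l\geq3$ I can always choose the auxiliary index $i$ disjoint from $\{\pm j,\pm k\}$, so $X_{jk}(b)$ genuinely lies in $\ls i$ and the Levi-conjugation lemmas apply without modification. The only delicate point is bookkeeping: ensuring the mixed terms $Y(v,\,\tilde ua,\,0)$, whose second argument involves $\tilde u$, transform correctly under $T_{jk}(b)$ acting on $\tilde u$ while $v$ stays fixed; this is where I would invoke Lemma~\ref{short-symmetry} (valid here since $\Gamma=I$) to swap arguments if needed and keep the action manifestly of the form $\phi(g)$ on both slots. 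I expect no genuinely new computation beyond carefully tracking how $T_{jk}(b)$ permutes coordinates, so the proof should parallel Lemma~\ref{conjugation-by-long} with the long-root conjugation replaced by the Levi-type conjugation of Lemma~\ref{w-conjugation}.
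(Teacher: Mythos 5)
Your skeleton --- fix $i\notin\{\pm j,\pm k\}$, write $Z(u,\,0,\,a)=Z_{(i)}(u,\,0,\,a)=Y_{(i)}(\tilde u,\,0,\,a)Y(v,\,0,\,a)Y(v,\,\tilde ua,\,0)$ with $v=e_iu_i+e_{-i}u_{-i}$, and conjugate factor by factor --- is the right one, and the first factor is indeed handled by Lemma~\ref{w-conjugation}. The gap is in your claim that every factor ``lies in a group on which $X_{jk}(b)$ acts by the Levi formula.'' The factors $Y(v,\,0,\,a)$ and $Y(v,\,\tilde ua,\,0)$ are based at the vector $v$, which is supported exactly on $\pm i$, so they can only be realized as $Y_{(m)}(v,\,\cdot\,,\,\cdot)$ for $m\notin\{\pm i\}$; on the other hand $X_{jk}(b)\in\ls m$ forces $m\notin\{\pm j,\pm k\}$ (since $\phi(g)$ must fix $e_{\pm m}$ for $g\in\ls m$, while $T_{jk}(b)$ moves $e_{-j}$ and $e_k$). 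When $l=3$ and $\Card\{\pm i,\pm j,\pm k\}=6$ these two requirements are incompatible, so Lemma~\ref{w-conjugation} and the other $\ls m$-conjugation lemmas simply do not apply to these two factors. This is exactly the point the paper singles out: its proof consists of the remark that Lemma~29 of the absolute paper goes through \emph{except} that ``another approach'' is needed to show that $X_{jk}(b)$ acts trivially on $Y(v,\,0,\,a)$ and $Y(v,\,\tilde{\tilde{\tilde u}}a,\,0)$, and all of its explicit content is devoted to that step.

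Two further problems with your proposed patch. First, your decomposition is too coarse: $\tilde u=u-v$ is only guaranteed to vanish at $\pm i$, so Lemma~\ref{short-symmetry} (which needs two pairs of zero coordinates on \emph{each} argument) does not license the swap $Y(v,\,\tilde ua,\,0)=Y(\tilde u,\,va,\,0)$, and $Y(\tilde u,\,\cdot\,,\,\cdot)$ without an index is not even well-defined notation for such $\tilde u$. The paper works instead with a finer decomposition in which the $\pm j$ and $\pm k$ components are also stripped off, so that the second slot $\tilde{\tilde{\tilde u}}a$ is fixed by $T_{jk}(b)$ and, after swapping the arguments, the element becomes $Y_{(i)}(\tilde{\tilde{\tilde u}}a,\,v,\,0)$, to which the $\ls i$-conjugation lemma does apply. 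Second, for $Y(v,\,0,\,a)$ there is nothing to swap, and the triviality of the action of $X_{jk}(b)$ on it is a genuine computation: one unfolds $Y(v,\,0,\,a)=\llbracket X(e_{-j},\,v,\,0),\,Y(e_j,\,0,\,a)]Y(e_j,\,va\eps j,\,0)$, lets $X_{jk}(b)$ carry $e_{-j}$ to $e_{-j}-e_{-k}b\eps k\eps j$, splits off the factor $X(e_{-k},\,-vb\eps k\eps j,\,0)$, and checks that the resulting extra commutator vanishes. Your expectation of ``no genuinely new computation beyond tracking coordinates'' is precisely where the proof is missing; for $l\geq4$ one could choose a fourth index and your argument would essentially go through, but the lemma must hold for $l=3$.
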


\begin{proof}
The proof of Lemma~29 from the Another presentation paper can be repeated almost verbatim. The only difference is that one should use another approach to show that $X_{jk}(b)$ acts trivially on $Y(v,\,0,\,a)$ and $Y(v,\,\tilde{\tilde{\tilde u}}a,\,0)$. Namely, one should observe first that $X_{jk}(b)\in\ls i$ acts trivially on $$Y_{(i)}(\tilde{\tilde{\tilde u}}a,\,v,\,0)=Y(v,\,\tilde{\tilde{\tilde u}}a,\,0).$$
Afterwards, decompose
$$
Y(v,\,0,\,a)=\llbracket X(e_{-j},\,v,\,0),\,Y(e_j,\,0,\,a)]Y(e_j,\,va\eps j,\,0),
$$
thus,
$$
\!\,^{X_{jk(b)}}Y(v,\,0,\,a)=\llbracket X(T_{jk}(b)e_{-j},\,v,\,0),\,Y(e_j,\,0,\,a)]Y(e_j,\,va\eps j,\,0).
$$
Since 
\begin{multline*}X(T_{jk}(b)e_{-j},\,v,\,0)=X(e_{-j}-e_{-k}a\eps k\eps j,\,v,\,0)=\\=X(e_{-j},\,v,\,0)X(-e_{-k}a\eps k\eps j,\,v,\,0)=X(e_{-j},\,v,\,0)X(e_{-k},\,-va\eps k\eps j,\,0),
\end{multline*}
we obtain that
\begin{multline*}
\!\,^{X_{jk(b)}}Y(v,\,0,\,a)=\\
=\llbracket X(e_{-j},\,v,\,0)X(e_{-k},\,-va\eps k\eps j,\,0),\,Y(e_j,\,0,\,a)]Y(e_j,\,va\eps j,\,0)=\\
=\!\,^{X(e_{-j},\,v,\,0)}\llbracket X(e_{-k},\,-va\eps k\eps j,\,0),\,Y(e_j,\,0,\,a)]\cdot Y(v,\,0,\,a).
\end{multline*}
Finally, it remains to observe that $X(e_{-j},\,v,\,0)\in\ls j$ acts trivially on $Y(e_j,\,0,\,a)$.
\end{proof}

\begin{cl*}
Lemmas~{\rm \ref{conjugation-by-long}} and {\rm \ref{conjugation-by-short}} imply that for any $g\in\St\!\Sp(2l,\,R)$, one has
$$
\!\,^gZ(u,\,0,\,a)=Z(\phi(g)u,\,0,\,a).
$$
\end{cl*}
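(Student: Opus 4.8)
The statement is an immediate consequence of the two preceding lemmas together with the fact that $\St\!\Sp_{2l}(R)$ is generated by the symbols $X_{ij}(b)$. The plan is to establish the formula first on generators and then propagate it along products by multiplicativity of the action and of $\phi$.

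First I would record the base case. Every generator $X_{ij}(b)$ is either a short-root symbol $X_{jk}(b)$ with $k\neq\pm j$ or a long-root symbol $X_{i,-i}(b)$, and under the homomorphism $\phi$ it maps to the corresponding transvection $T_{ij}(b)=\phi\big(X_{ij}(b)\big)$. Thus Lemmas~\ref{conjugation-by-long} and~\ref{conjugation-by-short} say precisely that
$$
\!\,^{X_{ij}(b)}Z(u,\,0,\,a)=Z\big(T_{ij}(b)u,\,0,\,a\big)=Z\big(\phi(X_{ij}(b))u,\,0,\,a\big)
$$
for every $u\in V$ and $a\in I$. Note that $Z(w,\,0,\,a)$ is defined for \emph{all} $w\in V$ (the constraint on $a$ is only $a\in\Gamma=I$, independent of $w$), so each right-hand side is automatically meaningful, and the generator formula holds for every argument simultaneously.

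Next I would show that the set of $g$ for which the formula holds for all $u\in V$ is closed under multiplication. Suppose $g_1$ and $g_2$ both satisfy $\!\,^{g_m}Z(w,\,0,\,a)=Z(\phi(g_m)w,\,0,\,a)$ for every $w\in V$. Applying this twice and using that $\phi$ is a homomorphism,
$$
\!\,^{g_1g_2}Z(u,\,0,\,a)=\!\,^{g_1}\big(\!\,^{g_2}Z(u,\,0,\,a)\big)=\!\,^{g_1}Z\big(\phi(g_2)u,\,0,\,a\big)=Z\big(\phi(g_1)\phi(g_2)u,\,0,\,a\big)=Z\big(\phi(g_1g_2)u,\,0,\,a\big).
$$
The crucial point is that the hypothesis for $g_1$ is invoked at the vector $\phi(g_2)u$ rather than at $u$, which is legitimate precisely because the generator formulas hold for every argument.

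Since each generator $X_{ij}(b)$ satisfies the formula and $X_{ij}(b)\inv=X_{ij}(-b)$ is again a generator, every $g\in\St\!\Sp_{2l}(R)$ is a finite product of such generators, and the claim follows by induction on the length of the word. I expect no genuine obstacle to remain at this stage: all the substantive work sits inside Lemmas~\ref{conjugation-by-long} and~\ref{conjugation-by-short}, and what is left is only the bookkeeping of multiplicativity, which is formal once one notes that both $\!\,^{(-)}$-conjugation and $\phi$ respect products.
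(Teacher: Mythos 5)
Your proof is correct and is exactly the argument the paper leaves implicit: the corollary is stated without proof because it follows from the two lemmas by the standard induction on word length in the generators, using that both the action and $\phi$ are multiplicative. Your observation that $Z(w,\,0,\,a)$ is defined for every $w\in V$ (so the generator formula can be invoked at $\phi(g_2)u$) is the right point to make explicit.
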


\begin{lm}
\label{generating}
The set of elements $\{Z(u,\,0,\,a)\mid u\in V,\ a\in I\}$ generates $\St\!\Sp_{2l}(R,\,I)$ as a group.
\end{lm}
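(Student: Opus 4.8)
The plan is to realise every defining generator of the group as a product of $Z$-elements. Recall that $\St\!\Sp_{2l}(R,\,I)$ is generated, as an abstract group, by the twisted relative generators $\!\,^gY_{pq}(a)$ and $\!\,^gY_{p,-p}(\alpha)$ with $g\in\St\!\Sp_{2l}(R)$, $p\not\in\{\pm q\}$, $a\in I$, $\alpha\in\Gamma=I$; so it suffices to express each of these as a product of elements $Z(u,\,0,\,a)$. The long-root generators are handled at once: by the corollary to Lemma~\ref{y-decomposition} one has $Y_{p,-p}(\alpha)=Y(e_p,\,0,\,\alpha)$, and choosing any pivot $m\not\in\{\pm p\}$ gives $Z_{(m)}(e_p,\,0,\,\alpha)=Y_{(m)}(e_p,\,0,\,\alpha)$, whence $Y_{p,-p}(\alpha)=Z(e_p,\,0,\,\alpha)$. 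Conjugating and invoking the corollary to Lemmas~\ref{conjugation-by-long} and~\ref{conjugation-by-short}, every $\!\,^gY_{p,-p}(\alpha)=Z(\phi(g)e_p,\,0,\,\alpha)$ is again a $Z$-element.

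The short-root generators are the heart of the matter, since $\varphi\big(Z(u,\,0,\,a)\big)=T(u,\,0,\,a)$ is of long-root type while $Y_{pq}(a)=Y(e_{-q},\,-e_pa\eps q,\,0)$ is of short-root type. The bridge I would use is Lemma~\ref{short-is-three-long} with trivial symplectic component $w=0$. Fixing a pivot $m\not\in\{\pm p,\pm q\}$, which exists since $l\geq3$, and taking $u=e_p$, $v=e_{-q}$, and scalar $c=-a\eps q\in I$, the lemma yields
\begin{equation*}
Y_{(m)}(e_p+e_{-q},\,0,\,c)=Y(e_p,\,0,\,c)\,Y(e_{-q},\,0,\,c)\,Y(e_{-q},\,e_pc,\,0).
\end{equation*}
Here the last factor is exactly $Y(e_{-q},\,-e_pa\eps q,\,0)=Y_{pq}(a)$, whereas the two preceding factors are the long-root elements $Z(e_p,\,0,\,c)$ and $Z(e_{-q},\,0,\,c)$ identified above.

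It remains to recognise the left-hand side as a $Z$-element. Although $e_p+e_{-q}$ need not have two pairs of vanishing coordinates when $l=3$, it does have the single zero pair $\pm m$; hence $Z_{(m)}(e_p+e_{-q},\,0,\,c)=Y_{(m)}(e_p+e_{-q},\,0,\,c)$, and by the index-independence of $Z$ established in Lemma~\ref{correctness} this equals $Z(e_p+e_{-q},\,0,\,c)$. Solving the displayed identity for the short-root generator gives
\begin{equation*}
Y_{pq}(a)=Z(e_{-q},\,0,\,c)\inv\,Z(e_p,\,0,\,c)\inv\,Z(e_p+e_{-q},\,0,\,c),
\end{equation*}
a product of three $Z$-elements. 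Conjugating by an arbitrary $g$ and applying $\!\,^gZ(u,\,0,\,c)=Z(\phi(g)u,\,0,\,c)$ term by term shows that every $\!\,^gY_{pq}(a)$ is likewise a product of $Z$-elements, and the lemma follows.

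The main obstacle is precisely the passage from long-root to short-root type. One has to notice that the three-long-root decomposition of Lemma~\ref{short-is-three-long}, once its symplectic slot is set to zero, isolates a short-root generator as one of the factors flanked by two genuine long-root $Z$-elements; and then one must check that the low-rank case $l=3$ does not obstruct identifying the remaining factor $Y_{(m)}(e_p+e_{-q},\,0,\,c)$ with an honest index-free $Z$-element, which is exactly what the single surviving zero pair $\pm m$ guarantees.
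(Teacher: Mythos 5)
Your proof is correct and follows essentially the same route as the paper: identify the long-root generators via $Y_{p,-p}(\alpha)=Y(e_p,0,\alpha)=Z(e_p,0,\alpha)$, then isolate the short-root generator $Y_{pq}(a)=Y(e_{-q},-e_pa\eps q,0)$ as the third factor in Lemma~\ref{short-is-three-long} with $w=0$, recognizing the remaining factors as $Z$-elements (using Lemma~\ref{correctness} for the one-zero-pair vector $e_p+e_{-q}$). The only difference is that you make explicit the final equivariance step for the twisted generators $\!\,^gY_{pq}(a)$, which the paper leaves implicit.
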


\begin{proof}
Firstly, choosing some $i$ and $j$ such that $\Card\{\pm i,\pm j\}=4$ one has
$$
Y_{(j)}(e_i,\,0,\,a)=Y(e_i,\,0,\,a)Y(0,\,0,\,a)Y(0,\,e_ia,\,0)=Y(e_i,\,0,\,a)=Z_{i,-i}(a).
$$
Now, choosing $k\not\in\{\pm i,\pm j\}$, taking $u=e_{-k}$, $v=-e_j\eps k$ and any $a\in I$, and using Lemma~\ref{short-is-three-long} we obtain
\begin{multline*}
X_{jk}(a)=Y(u,\,va,\,0)=\\
=Y(-e_j\eps k,\,0,\,-a)Y(e_{-k},\,0,\,-a)Y_{(i)}(e_{-k}-e_j\eps k,\,0,\,a)=\\
=Z_{(i)}(-e_j\eps k,\,0,\,-a)Z_{(i)}(e_{-k},\,0,\,-a)Z_{(i)}(e_{-k}-e_j\eps k,\,0,\,a).
\end{multline*}
\end{proof}

\begin{cl*}
Clearly, $\Ker\phi$ acts trivially on $\St\!\Sp_{2l}(R,\,I)$.
\end{cl*}

\begin{lm}
\label{long-additivity}
For $u\in V$, $a$, $b\in I$ one has
$$
Z(u,\,0,\,a)Z(u,\,0,\,b)=Z(u,\,0,\,a+b).
$$
\end{lm}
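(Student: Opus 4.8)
The plan is to reduce the additivity to the case in which the first argument has at least two pairs of vanishing coordinates, where it is nothing but Lemma~\ref{long-add} with zero middle argument. Indeed, if $u_i=u_{-i}=u_j=u_{-j}=0$ for some $i\neq\pm j$, then in the definition of $Z_{(i)}$ the vector $v=e_iu_i+e_{-i}u_{-i}$ is $0$, so $Z(u,\,0,\,a)=Y_{(i)}(u,\,0,\,a)=Y(u,\,0,\,a)$, and Lemma~\ref{long-add} (taken with zero middle slot and $b\in\Gamma=I$) gives $Y(u,\,0,\,a)Y(u,\,0,\,b)=Y(u,\,0,\,a+b)$. So the whole point is to pass from this base case to an arbitrary $u$.

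For a general $u$ I would fix indices $i\neq\pm j$ and invoke the six-term decomposition of $Z(u,\,0,\,a)$ already obtained in the proof of Lemma~\ref{correctness}: with $v=e_iu_i+e_{-i}u_{-i}$, $v'=e_ju_j+e_{-j}u_{-j}$, $\tilde u=u-v$, $\tilde{\tilde u}=\tilde u-v'$, one has
\begin{multline*}
Z(u,\,0,\,a)=Y(\tilde{\tilde u},\,0,\,a)\,Y(v',\,0,\,a)\,Y(v',\,\tilde{\tilde u}a,\,0)\cdot\\ \cdot Y(v,\,0,\,a)\,Y(v,\,\tilde{\tilde u}a,\,0)\,Y(v,\,v'a,\,0).
\end{multline*}
Since $l\geq3$, each of the vectors $\tilde{\tilde u}$, $v$, $v'$ has at least two pairs of vanishing coordinates, so the three ``long'' factors $Y(\tilde{\tilde u},\,0,\,\cdot)$, $Y(v',\,0,\,\cdot)$, $Y(v,\,0,\,\cdot)$ are additive in the last slot by Lemma~\ref{long-add}. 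The three ``cross'' factors $Y(v',\,\tilde{\tilde u}\,\cdot,\,0)$, $Y(v,\,\tilde{\tilde u}\,\cdot,\,0)$, $Y(v,\,v'\,\cdot,\,0)$ are additive in $a$ because their middle argument is $R$-linear in $a$ and Lemma~\ref{lm0} applies: the three vectors involved in each are pairwise orthogonal and supported on disjoint coordinate pairs, so all vanishing and orthogonality hypotheses are met. Thus $Z(u,\,0,\,a)=G_1(a)\cdots G_6(a)$ with each $G_k$ additive.

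It then remains to expand $Z(u,\,0,\,a)Z(u,\,0,\,b)=G_1(a)\cdots G_6(a)G_1(b)\cdots G_6(b)$ and regroup the twelve factors so that the two copies of each $G_k$ become adjacent; factor-wise additivity then collapses the product to $G_1(a+b)\cdots G_6(a+b)=Z(u,\,0,\,a+b)$. The commutations required to move each $b$-factor leftwards past the intervening $a$-factors are exactly of the type established in the proof of Lemma~\ref{correctness}. Concretely, each of $\tilde{\tilde u}$, $v$, $v'$ is fixed by the ESD-transformation attached to any of the others (they are orthogonal with disjoint support), so, after rewriting a cross factor via $Y(v,\,v'a,\,0)=Y(v',\,va,\,0)$ (Lemma~\ref{short-symmetry}) when needed, one places the relevant pair of factors in a common Levi subgroup $\ls m$ (choosing $m\in\{i,j,k\}$ with $k\not\in\{\pm i,\pm j\}$, available since $l\geq3$) and applies Lemma~\ref{w-conjugation}, which forces the conjugation to be trivial and hence the factors to commute.

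The \emph{main obstacle} is precisely this reordering bookkeeping: for every ordered pair of factors one must exhibit an auxiliary index at which both land in a common Levi subgroup, and check that the attached symplectic transformation indeed fixes the vectors of the other factor (so that Lemma~\ref{w-conjugation} yields commutativity). This is routine but lengthy, and it is the only delicate point. Note finally that no well-definedness issue arises along the way: we are in the maximal case $\Gamma=I$, so every condition of the form ``$a-\lan v_-,\,v_+\ran\in\Gamma$'' reduces to membership in $I$, and all intermediate generators are automatically defined.
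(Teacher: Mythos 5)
The paper gives no in-situ proof of this lemma: it defers entirely to Lemma~31 of~\cite{L3}, to be ``repeated verbatim'', so there is nothing on the page to compare your argument against. Judged on its own merits, your proof is sound and is assembled from the paper's own toolkit. The base case ($u$ with two vanishing coordinate pairs) via Lemma~\ref{long-add} is correct; the six-factor decomposition is exactly the one displayed in the proof of Lemma~\ref{correctness}; factor-wise additivity of the three long factors and the three cross factors follows from Lemmas~\ref{long-add} and~\ref{lm0} as you say (in each cross factor the two middle arguments share a vanishing coordinate pair with the first argument, which is what Lemma~\ref{lm0} needs). I spot-checked the commutations you defer to bookkeeping, including the least obvious one: for $Y(v',\,\tilde{\tilde u}b,\,0)$ against $Y(v,\,\tilde{\tilde u}a,\,0)$, neither factor's absolute counterpart lies in a Levi subgroup $\ls m$ for an index $m$ admissible for the other until one first rewrites via Lemma~\ref{short-symmetry}, after which $X(\tilde{\tilde u},\,v'b,\,0)\in\ls i$ acts on $Y_{(i)}(\tilde{\tilde u},\,va,\,0)$ by an ESD-transformation fixing both arguments -- precisely the maneuver you prescribe. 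So the only distance between your sketch and a complete proof is writing out the fifteen ordered pairs $(G_m(b),\,G_n(a))$, $n>m$, which is the same bookkeeping the externally cited proof must also perform; I see no pair for which the mechanism fails.
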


\begin{proof}
The proof of Lemma~31 from the Another Presentation paper can be repeated verbatim.
\end{proof}

\begin{lm}
\label{long-scalar}
For $j\not\in\{\pm i\}$, $u\in V$ such that $u_i=u_{-i}=u_j=u_{-j}=0$ and $a\in I$, $b\in R$, one has
$$
Y(ub,\,0,\,a)=Y(u,\,0,\,ab^2).
$$
\end{lm}

\begin{proof}
The claim follows directly from Lemma~16.
\end{proof}

\begin{lm}
\label{x-long-scalar}
For $u\in V$, $a\in I$, $b\in R$, one has
$$
Z(ub,\,0,\,a)=Z(u,\,0,\,ab^2).
$$
\end{lm}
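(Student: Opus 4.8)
The plan is to deduce the statement from Lemma~\ref{long-scalar}, which already gives $Y(ub,\,0,\,a)=Y(u,\,0,\,ab^2)$ whenever $u$ has two pairs of vanishing coordinates, by inducting on the number $N(u)$ of coordinate pairs $\{s,-s\}$ on which $u$ is nonzero. As a preliminary check both sides have the same image under $\varphi$, since $T(ub,\,0,\,a)$ and $T(u,\,0,\,ab^2)$ are literally the same ESD-transformation; this shows that the asserted relation is at least consistent, but of course does not prove it inside $\St\!\Sp_{2l}(R,\,I)$, where $\varphi$ has a large kernel.

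The inductive machinery is the corollary of Lemma~\ref{w=zz}: writing $v=e_iu_i+e_{-i}u_{-i}$ and $\tilde u=u-v$, and using that $Y_{(i)}(\tilde u,\,0,\,a)=Z(\tilde u,\,0,\,a)$ by Lemma~\ref{correctness} since $\tilde u$ vanishes on the pair $\{i,-i\}$, one gets
\[ Z(u,\,0,\,a)=Z(\tilde u,\,0,\,a)\,Y(v,\,0,\,a)\,Y(v,\,\tilde u a,\,0). \]
Applying the substitution $u\mapsto ub$ (so $v\mapsto vb$, $\tilde u\mapsto\tilde u b$) turns this into
\[ Z(ub,\,0,\,a)=Z(\tilde u b,\,0,\,a)\,Y(vb,\,0,\,a)\,Y(vb,\,\tilde u ab,\,0). \]
The base case $N(u)\le l-2$ is immediate: then $v=0$, $\tilde u=u$, the element $Z$ collapses to $Y$, and the claim is exactly Lemma~\ref{long-scalar}. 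In the inductive step the first factor becomes $Z(\tilde u,\,0,\,ab^2)$ by the inductive hypothesis (as $N(\tilde u)=N(u)-1$), the middle factor becomes $Y(v,\,0,\,ab^2)$ again by Lemma~\ref{long-scalar} (here $v$ is supported on a single pair), and reassembling by the same corollary yields $Z(u,\,0,\,ab^2)$, \emph{provided} one can verify the cross-term scaling $Y(vb,\,\tilde u ab,\,0)=Y(v,\,\tilde u ab^2,\,0)$.

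The cross term is the crux. Whenever $\tilde u$ retains a zero pair $\{m,-m\}$ besides $\{i,-i\}$ --- which happens precisely when $u$ is not fully supported --- the vectors $vb$ and $\tilde u$ share the zero pair $\{m,-m\}$, and two applications of Lemma~\ref{short-symmetry} move the scalar $b$ from the first slot to the second:
\[ Y(vb,\,\tilde u ab,\,0)=Y(\tilde u,\,v ab^2,\,0)=Y(v,\,\tilde u ab^2,\,0). \]
I expect the genuine obstacle to be exactly the fully supported vectors $u$, for which $\tilde u$ has $\{i,-i\}$ as its only zero pair: then $vb$ and $\tilde u$ have no common zero pair, and neither Lemma~\ref{short-symmetry} nor the additivity of Lemma~\ref{lm0} applies directly. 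To handle this I would not flip the cross term, but instead return to the extended definition of $Y_{(m)}$ (for an index $m\neq i$, where $v$ vanishes) and peel off the $\{m,-m\}$-components of $\tilde u a$ one pair at a time; each peel produces factors of the shape $Y(e_m,\,v\,(\cdot),\,0)$, in which $e_m$ and $v$ are supported on the two distinct pairs $\{m,-m\}$ and $\{i,-i\}$ and therefore do admit a common zero pair once $l\ge 3$, so that Lemma~\ref{short-symmetry} and Lemma~\ref{long-scalar} can be applied to track the scalar $b$ through. The rank-three situation is the tightest, since then only a single spare coordinate pair is available at each step; carrying out this bookkeeping, controlled by the Heisenberg relations of Lemma~\ref{heis}, is where the real work of the proof lies.
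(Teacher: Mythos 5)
The paper offers no self-contained argument here: its ``proof'' is the single sentence that Lemma~33 of~\cite{L3} can be repeated verbatim, so there is nothing concrete to compare your argument against except that absolute prototype. Taken on its own terms, your proof is essentially correct and well structured: the reduction of $Z(ub,\,0,\,a)=Z(u,\,0,\,ab^2)$ to the factorwise comparison of $Z(\tilde u b,\,0,\,a)\,Y(vb,\,0,\,a)\,Y(vb,\,\tilde u ab,\,0)$ with $Z(\tilde u,\,0,\,ab^2)\,Y(v,\,0,\,ab^2)\,Y(v,\,\tilde u ab^2,\,0)$ is sound, the first two factors are indeed disposed of by induction and Lemma~\ref{long-scalar}, and your treatment of the cross term via two applications of Lemma~\ref{short-symmetry} (with scalars $ab$ and then $ab^2$, both in $I$) is exactly right whenever $vb$ and $\tilde u$ share a spare zero pair. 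You also correctly isolate the fully supported case as the only real obstacle.

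The one place you stop short of a proof is that fully supported case, but your peeling strategy does go through, and more easily than you anticipate: a \emph{single} peel at one index $m\not\in\{\pm i\}$ suffices. Writing $w=\tilde u a$ and $q=w-e_mw_m-e_{-m}w_{-m}$, the extended definition gives
$Y_{(m)}(vb,\,wb,\,0)=Y_{(m)}(vb,\,qb,\,-w_mw_{-m}b^2\eps m)\,Y(e_m,\,vw_mb^2,\,0)\,Y(e_{-m},\,vw_{-m}b^2,\,0)$,
and the two peeled short-root factors coincide \emph{identically} with those of $Y_{(m)}(v,\,wb^2,\,0)$, since $vb\cdot w_mb=v\cdot w_mb^2$ --- no symmetry lemma is needed for them at all. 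For the remaining inner term, split off the third slot by Lemma~\ref{long-add} (everything lies in $\Gamma=I$), match the long-root corrections $Y(vb,\,0,\,-w_mw_{-m}b^2\eps m)=Y(v,\,0,\,-w_mw_{-m}b^4\eps m)$ by Lemma~\ref{long-scalar}, and observe that $qb$ now vanishes on both $\{\pm i\}$ and $\{\pm m\}$, so $Y(vb,\,qb,\,0)=Y(v,\,qb^2,\,0)$ follows from your own double application of Lemma~\ref{short-symmetry}. With that paragraph filled in, your argument is complete; no step of it fails, and it is arguably more informative than the paper's bare citation.
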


\begin{proof}
The proof of Lemma~33 from the Another Presentation paper can be repeated verbatim.\end{proof}

\begin{df}
For $u$, $v\in V$ such that $\lan u,\,v\ran=0$, $a\in I$ set
$$
Z(v,\,u,\,a,\,0)=Z(v,\,0,\,-a)Z(u,\,0,\,-a)Z(u+v,\,0,\,a).
$$
\end{df}

\begin{lm}
\label{short-obvious}
For $g\in\St\!\Sp(2l,\,R)$ and $u$, $v\in V$ such that $\lan u,\,v\ran=0$, $a\in I$, $b\in R$, one has
\begin{enumerate}
\item $Z(v,\,u,\,a,\,0)=Z(u,\,v,\,a,\,0);$
\item $\!\,^gZ(u,\,v,\,a,\,0)=Z(\phi(g)u,\,\phi(g)v,\,a,\,0);$
\item $Z(u,\,ub,\,a,\,0)=Z(u,\,0,\,2ab)$.
\end{enumerate}
\end{lm}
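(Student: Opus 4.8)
The plan is to dispatch (b) and (c) directly from the results already in hand, treating item (a) as the only step with real content. Throughout I write the defining relation in the form $Z(X,\,Y,\,a,\,0)=Z(X,\,0,\,-a)Z(Y,\,0,\,-a)Z(X+Y,\,0,\,a)$.

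For (b) I would apply the action of $g$ to this product. Since the action is by automorphisms, $\!\,^g$ distributes over the three factors, and the corollary to Lemma~\ref{conjugation-by-short} gives $\!\,^gZ(w,\,0,\,c)=Z(\phi(g)w,\,0,\,c)$ on each. As $\phi(g)$ is $R$-linear one has $\phi(g)(u+v)=\phi(g)u+\phi(g)v$, and as $\phi(g)\in\Sp(V)$ one has $\lan\phi(g)u,\,\phi(g)v\ran=\lan u,\,v\ran=0$; hence the transformed product is precisely the defining product of $Z(\phi(g)u,\,\phi(g)v,\,a,\,0)$, with no case analysis needed.

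For (c) I would substitute $v=ub$ (admissible since $\lan u,\,ub\ran=0$) to get
$$Z(u,\,ub,\,a,\,0)=Z(u,\,0,\,-a)Z(ub,\,0,\,-a)Z(u(1+b),\,0,\,a).$$
Lemma~\ref{x-long-scalar} rewrites the last two factors as $Z(u,\,0,\,-ab^2)$ and $Z(u,\,0,\,a(1+b)^2)$, and Lemma~\ref{long-additivity} collapses everything to $Z(u,\,0,\,-a-ab^2+a(1+b)^2)$. Since $(1+b)^2=1+2b+b^2$ the scalar is $2ab$, so $Z(u,\,ub,\,a,\,0)=Z(u,\,0,\,2ab)$. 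This holds for every $u$, both cited lemmas being stated for arbitrary vectors.

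For (a) the first observation is that the two sides share the factor $Z(u+v,\,0,\,a)$, so
$$Z(v,\,u,\,a,\,0)=Z(u,\,v,\,a,\,0)\iff[Z(u,\,0,\,-a),\,Z(v,\,0,\,-a)]=1,$$
reducing symmetry to the commutativity of two long-root generators attached to orthogonal vectors. When $u$ and $v$ have a common vanishing pair $\pm i$ plus one further vanishing pair each, I would apply Lemma~\ref{short-is-three-long} with $w=0$ to get $Z(u+v,\,0,\,a)=Z(u,\,0,\,a)Z(v,\,0,\,a)Y(v,\,ua,\,0)$; inserting this into the definition and cancelling $Z(u,\,0,\,-a)Z(u,\,0,\,a)=1$ and $Z(v,\,0,\,-a)Z(v,\,0,\,a)=1$ by Lemma~\ref{long-additivity} yields $Z(v,\,u,\,a,\,0)=Y(v,\,ua,\,0)$. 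The analogous computation gives $Z(u,\,v,\,a,\,0)=Y(u,\,va,\,0)$, and Lemma~\ref{short-symmetry} equates the two short-root generators, proving (a) in this configuration. The genuine obstacle is to reach arbitrary orthogonal $u,\,v$: a general pair shares no zero coordinate, and since the vectors need not be unimodular one cannot always move them into standard position by an element of $\Ep_{2l}(R)$. I would resolve this by the transport argument of (b) whenever a suitable $g$ exists, and otherwise by reproducing the disjoint-support decomposition already used inside the proof of Lemma~\ref{correctness}, which is the point where the absolute counterpart is handled verbatim in the Another Presentation paper.
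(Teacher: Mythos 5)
Your parts (b) and (c) are fine: (c) is word for word the computation the paper gives (the paper writes out only (c), declaring (a) and (b) obvious), and your (b) is the correct unwinding of ``obvious'' via the corollary to Lemmas~\ref{conjugation-by-long} and~\ref{conjugation-by-short} applied factorwise.

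The gap is in (a). Your reduction to $[Z(u,\,0,\,-a),\,Z(v,\,0,\,-a)]=1$ for orthogonal $u$, $v$ is right, and your special-case argument (common vanishing pairs, Lemmas~\ref{short-is-three-long} and~\ref{short-symmetry}, giving $Z(v,\,u,\,a,\,0)=Y(v,\,ua,\,0)=Y(u,\,va,\,0)=Z(u,\,v,\,a,\,0)$) is sound. But the passage to arbitrary orthogonal $u$, $v$ is precisely what you leave open, and neither fallback you name closes it: a transporting $g$ need not exist for non-unimodular vectors, as you yourself observe; and the block decomposition from the proof of Lemma~\ref{correctness} splits a \emph{single} vector along hyperbolic pairs --- applied to both $u$ and $v$ it produces components $e_iu_i+e_{-i}u_{-i}$ and $e_iv_i+e_{-i}v_{-i}$ supported on the same pair $\{\pm i\}$, which are in general \emph{not} orthogonal to one another even though $\lan u,\,v\ran=0$, so your special case does not apply to the pieces and cross terms remain uncontrolled.

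The argument that makes (a) genuinely obvious, uniformly in $u$ and $v$, goes through the action rather than through coordinates. By KL7, conjugation by $Y_{ij}(a)$ inside $\St\!\Sp_{2l}(R,\,I)$ coincides with the action of $X_{ij}(a)$; since both are automorphisms agreeing on generators, conjugation by any element $h$ of the relative group coincides with the action of some $g'\in\St\!\Sp_{2l}(R)$ with $\phi(g')=\varphi(h)$, and by the corollary to Lemma~\ref{generating} this action depends only on $\phi(g')$. Taking $h=Z(v,\,0,\,-a)$, so that $\phi(g')=T(v,\,0,\,-a)$, the corollary to Lemmas~\ref{conjugation-by-long} and~\ref{conjugation-by-short} gives $Z(v,\,0,\,-a)\,Z(u,\,0,\,-a)\,Z(v,\,0,\,-a)\inv=Z(T(v,\,0,\,-a)u,\,0,\,-a)=Z(u,\,0,\,-a)$, because $T(v,\,0,\,-a)$ fixes every vector orthogonal to $v$. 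No hypothesis on the coordinates of $u$ or $v$ is needed, and the same mechanism is what makes the commutations you invoke elsewhere legitimate.
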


\begin{proof}
Since {\it a}) and {\it b}) are obvious, it remains only to check {\it c}). By the very definition we have
$$
Z(u,\,ub,\,a,\,0)=Z(u,\,0,\,-a)Z(ub,\,0,\,-a)Z(ub+u,\,0,\,a).
$$
Then, using Lemma~\ref{x-long-scalar} and then Lemma~\ref{long-additivity}, we get
\begin{multline*}
Z(u,\,0,\,-a)Z(ub,\,0,\,-a)Z(ub+u,\,0,\,a)=\\
=Z(u,\,0,\,-a)Z(u,\,0,\,-ab^2)Z(u,\,0,\,a(b+1)^2)=Z(u,\,0,\,2ab).
\end{multline*}
\end{proof}

\begin{lm}
\label{new}
Consider $u$, $w\in V$ such that 
$$\lan u,\,w\ran=0,\qquad w_i=w_{-i}=w_j=w_{-j}=0,$$
where $i\not\in\{\pm j\}.$ Then
$$
Z(u+w,\,0,\,a)=Z(u,\,0,\,a)Z(w,\,0,\,a)Y(w,\,ua,\,0).
$$
\end{lm}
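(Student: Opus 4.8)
The plan is to reduce the whole statement to a single ``short-root identification'' and then prove that by induction on the support of $u$.

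\medskip

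First I would record the purely formal identity
\[
Z(u+w,\,0,\,a)=Z(u,\,0,\,a)\,Z(w,\,0,\,a)\,Z(w,\,u,\,a,\,0),
\]
valid for \emph{all} $u$, $w\in V$ with $\lan u,\,w\ran=0$ and no hypothesis on $w$ yet. This follows at once by unfolding the definition $Z(w,\,u,\,a,\,0)=Z(w,\,0,\,-a)Z(u,\,0,\,-a)Z(u+w,\,0,\,a)$, left-multiplying by $Z(u,\,0,\,a)Z(w,\,0,\,a)$ and cancelling the inner factors $Z(w,\,0,\,a)Z(w,\,0,\,-a)$ and $Z(u,\,0,\,a)Z(u,\,0,\,-a)$ via the long-root additivity of Lemma~\ref{long-additivity} together with $Z(x,\,0,\,0)=1$. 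Comparing this with the asserted formula, the lemma becomes \emph{equivalent} to the identity
\[
Z(w,\,u,\,a,\,0)=Y(w,\,ua,\,0)
\]
under the standing hypotheses $w_i=w_{-i}=w_j=w_{-j}=0$ and $\lan u,\,w\ran=0$. As a sanity check, both sides have the same image under $\varphi$, namely the $\St\!\Sp$-transformation $x\mapsto x+wa\lan u,\,x\ran+ua\lan w,\,x\ran$.

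\medskip

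To prove this reduced identity I would induct on the number of non-zero coordinate pairs of $u$. The case $u=0$ is trivial, since both sides are $1$. For the inductive step I peel off a single pair $\pi=e_hu_h+e_{-h}u_{-h}$ of $u$, writing $u=u'+\pi$ with $u'$ of strictly smaller support and with $\pi$, $u'$ both orthogonal to $w$ (choosing $h$ outside a two-pairs-of-zeros set of $w$, so that $\pi+w$ again has two pairs of zeros). Applying the formal identity of the first paragraph three times expresses $Z(w,\,u,\,a,\,0)$ in terms of $Z(\pi,\,u',\,\cdot,\,0)$, $Z(\pi+w,\,u',\,\cdot,\,0)$, $Z(w,\,\pi,\,\cdot,\,0)$ and pure long roots $Z(\cdot,\,0,\,\cdot)$; since $\pi$, $\pi+w$ and $w$ all have two pairs of zeros and $u'$ has smaller support, the inductive hypothesis converts each mixed term into a genuine short root $Y(\cdot,\,\cdot,\,0)$. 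What remains is a computation entirely at the level of the already-established $Y$-lemmas: the cross terms are recombined by Lemma~\ref{short-is-three-long}, the symmetry $Y(\pi,\,u'a,\,0)=Y(u',\,\pi a,\,0)$ of Lemma~\ref{short-symmetry} is used to line up first arguments, and the final reassembly into $Y(w,\,ua,\,0)$ uses Lemmas~\ref{y-add} and~\ref{long-add}. The base of this reassembly is the two-base-vector case $Z(e_kc,\,e_hd,\,a,\,0)=Y(e_kc,\,e_hda,\,0)$, where $h=k$ (parallel) is Lemma~\ref{short-obvious}(c) and $h\neq\pm k$ (disjoint) is Lemma~\ref{short-is-three-long} followed by Lemma~\ref{short-symmetry}.

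\medskip

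The main obstacle is the bookkeeping of the long-root ``defect'' terms. Peeling a pair off $u$ is not additive: each application of the formal identity and of Lemma~\ref{short-is-three-long} spawns long-root factors $Z(w,\,0,\,\ast)$ on the left, which must be matched against the factors $Y(w,\,0,\,\ast)$ produced on the right by the non-additivity in Lemma~\ref{y-add} (the term $\lan v,\,w\ran$) rearranged via Lemma~\ref{long-add}. These cancel because $Z(w,\,0,\,\beta)=Y(w,\,0,\,\beta)$ for $w$ with two pairs of zeros and because the scalars occurring (of the shape $a^2\lan\cdot,\,\cdot\ran$) lie in $\Gamma$ by the form-parameter axioms $ra^2\in\Gamma$; in the disjoint base case one also needs that long-root generators at orthogonal positions commute, which follows from Lemma~\ref{heis} and the $\St\!\Sp_{2l}(R)$-equivariance of $Z(\cdot,\,0,\,a)$. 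Keeping these corrections, the orthogonality of each split piece to $w$, the two-pairs-of-zeros condition at every stage (with the $l=3$ tight case handled by re-choosing the indices $i$, $j$ witnessing the zeros of $w$), and the well-definedness conditions $a-\lan v_-,\,v_+\ran\in\Gamma$ simultaneously under control is the delicate part; every individual algebraic move is an instance of a lemma already proved.
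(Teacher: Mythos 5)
Your opening reduction is correct and worth keeping: unfolding the definition of $Z(w,\,u,\,a,\,0)$ and cancelling via Lemma~\ref{long-additivity} does show the lemma is equivalent to $Z(w,\,u,\,a,\,0)=Y(w,\,ua,\,0)$ for $w$ with two pairs of zeros, and your base cases are essentially fine. The gap is in the inductive step, and it is structural, not bookkeeping. Your recursion needs, for the mixed term $Z(\pi+w,\,u',\,a,\,0)$, that the peeled pair $\pi=e_hu_h+e_{-h}u_{-h}$ be orthogonal to $w$ \emph{and} that $\pi+w$ have two pairs of zeros (otherwise $Y(\pi+w,\,u'a,\,0)$ is not even defined --- the index-free notation requires two pairs of zeros --- and the inductive hypothesis cannot be invoked). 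These two requirements are incompatible in general. Take $l=3$, $w$ supported on the single pair $\pm3$, and $u=e_1+e_2$: every peelable pair $\pi$ of $u$ sits at a zero-position of $w$, so $\pi\perp w$ holds, but $\pi+w$ is then supported on two pairs and has only \emph{one} pair of zeros; the zero set of $w$ is exactly $\{\pm1,\pm2\}$, so there is no ``re-choice of $i$, $j$'' available. Choosing $h$ inside the support of $w$ instead destroys orthogonality: for $l=4$, $w=e_3+e_4$, $u=e_{-3}-e_{-4}$ one has $\lan u,\,w\ran=0$ but $\lan e_{-3},\,w\ran\neq0$, so no single pair of this $u$ can be peeled at all. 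Regrouping the third application of the formal identity as $Z(u+w,\,0,\,a)=Z(u'+w,\,0,\,a)Z(\pi,\,0,\,a)Z(\pi,\,u'+w,\,a,\,0)$ does not help either, since the second argument $u'+w$ of the new mixed term has support no smaller than that of $u$, so the induction measure does not decrease.

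The paper's proof (Lemma~35 of~\cite{L3}, repeated verbatim) avoids exactly this trap by never peeling generic pairs: it splits $u=\tilde{\tilde u}+v+v'$ where $v$, $v'$ are the pairs of $u$ at the two zero-pairs $\pm i$, $\pm j$ of $w$, so that $\tilde{\tilde u}$ and $\tilde{\tilde u}+w$ both vanish at $\pm i$, $\pm j$ and Lemma~\ref{short-is-three-long} applies directly, while the contributions of $v$ and $v'$ are absorbed using Lemmas~\ref{lm0} and~\ref{w-symmetry} --- the two lemmas that are in the paper precisely for this purpose and that your sketch mentions only in passing. As a smaller point, your parallel base case $Z(e_kc,\,e_kd,\,a,\,0)=Y(e_kc,\,e_kda,\,0)$ is not an instance of Lemma~\ref{short-obvious}(c) unless $d\in cR$; it needs the decomposition lemmas for $Y$ instead. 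As written, the induction does not close, so the argument is not yet a proof.
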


\begin{proof}
The proof of Lemma~35 from the Another Presentation paper can be repeated verbatim.
\end{proof}

\begin{lm}
\label{x=y}
For $v\in V$ such that $v_{-i}=0$, $a\in I$, one has
$$
Z(e_i,\,v,\,a,\,0)=Y(e_i,\,va,\,0).
$$
\end{lm}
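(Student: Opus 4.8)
The plan is to start from the very definition of the element $Z(e_i,\,v,\,a,\,0)$ and to collapse it using the additivity of the long-root generators, the crucial point being \emph{which} vector to peel off. By the definition preceding Lemma~\ref{short-obvious},
$$
Z(e_i,\,v,\,a,\,0)=Z(e_i,\,0,\,-a)Z(v,\,0,\,-a)Z(v+e_i,\,0,\,a).
$$
The key observation is that one should split off $e_i$ rather than attempt to decompose the general vector $v$: the vector $e_i$ always has at least two pairs of zero coordinates (this is where $l\geq3$ enters), whereas $v$ need not. Thus Lemma~\ref{new} should be applied with its distinguished ``short'' vector taken to be $e_i$ and the arbitrary vector taken to be $v$.

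First I would verify the hypotheses of Lemma~\ref{new} for the pair $(v,\,e_i)$. Since $v_{-i}=0$ one has $\lan v,\,e_i\ran=\eps{-i}v_{-i}=0$, and $e_i$ satisfies $(e_i)_p=(e_i)_{-p}=0$ for every $p\neq\pm i$, so two such indices can be chosen (possible because $l\geq3$). Lemma~\ref{new} then gives
$$
Z(v+e_i,\,0,\,a)=Z(v,\,0,\,a)Z(e_i,\,0,\,a)Y(e_i,\,va,\,0).
$$
Substituting this into the previous display yields
$$
Z(e_i,\,v,\,a,\,0)=Z(e_i,\,0,\,-a)Z(v,\,0,\,-a)Z(v,\,0,\,a)Z(e_i,\,0,\,a)Y(e_i,\,va,\,0).
$$

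The final step is a telescoping cancellation. By Lemma~\ref{long-additivity} (stated for arbitrary $u\in V$) one has $Z(v,\,0,\,-a)Z(v,\,0,\,a)=Z(v,\,0,\,0)$, and feeding $a=b=0$ into the same lemma shows $Z(v,\,0,\,0)$ is idempotent, hence trivial; likewise $Z(e_i,\,0,\,-a)Z(e_i,\,0,\,a)=Z(e_i,\,0,\,0)=1$. The two inner factors $Z(v,\,0,\,-a)$ and $Z(v,\,0,\,a)$ are already adjacent, so they cancel first, after which the surviving outer factors $Z(e_i,\,0,\,-a)$ and $Z(e_i,\,0,\,a)$ become adjacent and cancel as well, leaving exactly $Y(e_i,\,va,\,0)$. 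I do not expect a genuine obstacle: once $e_i$, and not $v$, is chosen as the vector to isolate in Lemma~\ref{new}, the whole computation reduces to the additivity relation for $Z(\cdot,\,0,\,\cdot)$. The only point deserving care is that all three instances of $Z(\cdot,\,0,\,\cdot)$ and the output $Y(e_i,\,va,\,0)$ are well-defined, which is automatic here because $\Gamma=I$ throughout this section.
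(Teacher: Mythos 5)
Your proposal is correct and is exactly the paper's argument: the paper's entire proof is ``in Lemma~\ref{new} take $u=v$, $w=e_i$,'' which is precisely your substitution, followed by the same telescoping cancellation via Lemma~\ref{long-additivity}. You have simply spelled out the details (the orthogonality check, why $e_i$ has enough zero coordinates, and why $Z(\cdot,\,0,\,0)=1$) that the paper leaves implicit.
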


\begin{proof}
In the statement of Lemma~\ref{new} take $u=v$, $w=e_i$.
\end{proof}

\begin{cl*}
Let $u$ be a column of a symplectic elementary matrix, $v$, $w\in V$ such that $\lan u,\,v\ran=\lan u,\,w\ran=0$, $a$, $b\in I$ such that $va=wb$. Then one has
$$
Z(u,\,v,\,a,\,0)=Z(u,\,w,\,b,\,0).
$$
\end{cl*}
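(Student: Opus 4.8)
The plan is to translate the whole identity into the special case $u=e_i$, where it becomes an immediate consequence of Lemma~\ref{x=y}. The device for this translation is the conjugation formula of Lemma~\ref{short-obvious}.

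First I would use that $u$ is a column of a symplectic elementary matrix to write $u=\phi(g)e_i$ for some index $i$ and some $g\in\St\!\Sp_{2l}(R)$: such a column is $Me_i$ for a matrix $M\in\Ep_{2l}(R)=\phi\big(\St\!\Sp_{2l}(R)\big)$, so any preimage $g$ of $M$ under $\phi$ does the job. Then I set $v'=\phi(g)\inv v$ and $w'=\phi(g)\inv w$. Since $\phi(g)\inv\in\Sp(V)$ preserves the form and is $R$-linear, I get $\lan e_i,\,v'\ran=\lan u,\,v\ran=0$, whence $v'_{-i}=0$; likewise $w'_{-i}=0$; and
$$
v'a=\phi(g)\inv(va)=\phi(g)\inv(wb)=w'b.
$$

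Now the conjugation property of Lemma~\ref{short-obvious} gives $Z(u,\,v,\,a,\,0)=\!\,^gZ(e_i,\,v',\,a,\,0)$ and $Z(u,\,w,\,b,\,0)=\!\,^gZ(e_i,\,w',\,b,\,0)$. Since $v'_{-i}=w'_{-i}=0$, Lemma~\ref{x=y} rewrites the two inner generators as $Y(e_i,\,v'a,\,0)$ and $Y(e_i,\,w'b,\,0)$ respectively; these coincide because $v'a=w'b$, and applying $\!\,^g$ to both sides yields the claimed equality. The only step that is not purely formal is the reduction $u=\phi(g)e_i$ together with the verification that $\phi(g)\inv$ respects both the orthogonality conditions and the relation $va=wb$; once $u$ is moved onto a basis vector, the assertion is precisely Lemma~\ref{x=y} applied to the equal vectors $v'a=w'b$.
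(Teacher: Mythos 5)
Your proposal is correct and follows essentially the same route as the paper: reduce to $u=e_i$ via the conjugation formula of Lemma~\ref{short-obvious}(b), apply Lemma~\ref{x=y} to rewrite both generators as $Y(e_i,\,v'a,\,0)=Y(e_i,\,w'b,\,0)$, and conjugate back. Your version is if anything slightly more careful, since you explicitly verify $v'_{-i}=w'_{-i}=0$ (needed to invoke Lemma~\ref{x=y}) and keep the roles of $g$ and $g\inv$ consistent.
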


\begin{proof}
Take $g\in\St\!\Sp_{2l}(R)$ such that $\phi(g)u=e_i$. Then one has
\begin{multline*}
Z(u,\,v,\,a,\,0)=\!\,^gZ(e_i,\,\phi(g)\inv v,\,a,\,0)=\!\,^gY(e_i,\,\phi(g)\inv va,\,0)=\\=\!\,^gY(e_i,\,\phi(g)\inv wb,\,0)=\!\,^gZ(e_i,\,\phi(g)\inv w,\,b,\,0)=Z(u,\,w,\,b,\,0).
\end{multline*}
\end{proof}

%
%

\begin{lm}
\label{x-long-is-three-short}
Consider $a\in I$, $r\in R$ and $u$, $v\in V$ such that $\lan u,\,v\ran=0$ and assume also that $v$ is a column of a symplectic elementary matrix. Then
$$
Z(u+vr,\,0,\,a)=Z(u,\,0,\,a)Z(v,\,0,\,ar^2)Z(v,\,u,\,ar,\,0).
$$
\end{lm}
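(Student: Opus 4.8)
The plan is to first use equivariance to reduce to the case $v=e_k$, and then to read the identity off from Lemma~\ref{new}.

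For the reduction, note that since $v$ is a column of a symplectic elementary matrix there is an index $k$ and an element $g\in\St\!\Sp_{2l}(R)$ with $\phi(g)e_k=v$. All four factors appearing in the claim transform covariantly under the action of $\St\!\Sp_{2l}(R)$: the three long-root factors by the corollary following Lemma~\ref{conjugation-by-short}, and $Z(v,\,u,\,ar,\,0)$ by Lemma~\ref{short-obvious}(b). Hence, applying $\!\,^{g\inv}$ to both sides, it suffices to prove the identity for $v=e_k$ and $u$ replaced by $\phi(g)\inv u$; since $\phi$ preserves the form, $\lan\phi(g)\inv u,\,e_k\ran=\lan u,\,v\ran=0$, so after this reduction we may assume $v=e_k$ and $u_{-k}=0$.

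Now, as $l\geq3$, choose $i,\,j\not\in\{\pm k\}$ with $\Card\{\pm i,\,\pm j\}=4$. Then $w:=e_kr$ satisfies $w_i=w_{-i}=w_j=w_{-j}=0$ and $\lan u,\,w\ran=r\lan u,\,e_k\ran=0$, so Lemma~\ref{new} yields
\[ Z(u+e_kr,\,0,\,a)=Z(u,\,0,\,a)\,Z(e_kr,\,0,\,a)\,Y(e_kr,\,ua,\,0). \]
By Lemma~\ref{x-long-scalar}, $Z(e_kr,\,0,\,a)=Z(e_k,\,0,\,ar^2)$, which is exactly the required middle factor. It therefore remains to identify the last factor, that is, to show $Y(e_kr,\,ua,\,0)=Z(e_k,\,u,\,ar,\,0)$. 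Since $u_{-k}=0$, Lemma~\ref{x=y} gives $Z(e_k,\,u,\,ar,\,0)=Y(e_k,\,uar,\,0)$, so the claim comes down to the scalar-transfer identity $Y(e_kr,\,ua,\,0)=Y(e_k,\,uar,\,0)$.

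This last step is where the real work lies and is the main obstacle. The natural tool is Lemma~\ref{z-correctness} with vanishing third arguments, which reads $Y(u',\,v'r,\,0)=Y(u'r,\,v',\,0)$; however it requires both vectors to vanish on two common index pairs, which $ua$ need not do. I would therefore first decompose $u=\sum_m e_mu_m$ and use the additivity of Lemma~\ref{lm0} (together with Lemma~\ref{y-add}) to reduce the second slot to single-coordinate vectors $e_mu_ma$, for which a free pair $\pm i,\,\pm j$ always exists and Lemma~\ref{switch} or Lemma~\ref{z-correctness} applies directly; the diagonal contribution coming from $u_k$ (a second-slot vector parallel to $e_k$) must be handled separately via the long-root relations of Lemma~\ref{short-obvious}(c) and Lemma~\ref{long-additivity}. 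The bookkeeping of the symplectic cross-terms $\lan\ ,\ \ran$ produced by this decomposition, and the separate treatment of the $u_k$-term, are the only genuinely delicate points.
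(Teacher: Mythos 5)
Your proposal follows exactly the route of the paper's own proof: conjugate by $g$ with $\phi(g)e_k=v$ to reduce to $v=e_k$, apply Lemma~\ref{new} with $w=e_kr$, convert the middle factor by Lemma~\ref{x-long-scalar}, and match the last factor with $Z(e_k,\,u,\,ar,\,0)=Y(e_k,\,uar,\,0)$ via Lemma~\ref{x=y}. The paper's proof is precisely this three-line computation and it treats the final identification as immediate, whereas you correctly observe that it in fact rests on the scalar-transfer identity $Y(e_kr,\,ua,\,0)=Y(e_k,\,uar,\,0)$, which is not a literal citation of any stated lemma (Lemma~\ref{z-correctness} and Lemma~\ref{short-symmetry} both require the second slot to vanish on two index pairs, and the corollary after Lemma~\ref{x=y} requires the first slot to be a column of an elementary matrix, which $u$ need not be). So your diagnosis of where the real content sits is sharper than the paper's own exposition.

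The one shortcoming is that you stop at a sketch of that last step rather than proving it, and the sketch you propose is more laborious than necessary: a full coordinate decomposition of $ua$ runs into exactly the cross-term and diagonal bookkeeping you mention, and the appeal to Lemma~\ref{z-correctness} on single-coordinate pieces can even fail to find enough free index pairs when $l=3$. A cleaner completion: write $q=ua$, pick $j\not\in\{\pm k\}$, peel off the $\pm j$-coordinates of $q$ using the extended definition of $Y_{(j)}(e_kr,\,q,\,0)$, convert the two peeled factors by Lemma~\ref{switch}, and apply Lemma~\ref{ppc} to the remaining commutator $\llbracket X(e_j,\,e_kr,\,0),\,Y(e_{-j},\,p\eps j,\,c)]$; this yields $Y_{(j)}(e_kr,\,p,\,c)=Y(e_k,\,pr,\,cr^2)$ in one stroke (the two long-root tails cancel), and reassembling with Lemma~\ref{y-add} gives $Y(e_kr,\,q,\,0)=Y(e_k,\,qr,\,0)$ with all cross-terms cancelling. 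With that supplied, your argument is complete and coincides with the paper's.
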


\begin{proof}
Take $g\in\St\!\Sp_{2l}(R)$ such that $\phi(g)v=e_i$. Then,
$$
\!\,^gZ(u+vr,\,0,\,a)=Z(\phi(g)u+e_ir,\,0,\,a).
$$
Now, Lemma~\ref{new} (and Lemma~\ref{x=y}) imply that
\begin{multline*}
Z(\phi(g)u+e_ir,\,0,\,a)=Z(\phi(g)u,\,0,\,a)Z(e_ir,\,0,\,a)Y(e_ir,\,\phi(g)ua,\,0)=\\
=\!\,^g\Big(Z(u,\,0,\,a)Z(v,\,0,\,ar^2)Z(v,\,u,\,ar,\,0)\Big).
\end{multline*}
\end{proof}

\begin{lm}
\label{short-additivity}
Let $u\in V$ be column of a symplectic elementary matrix and let $v$, $w\in V$ be arbitrary coloumns such that $\lan u,\,v\ran=\lan u,\,w\ran=0$, $a$, $b\in I$. Then
\begin{align*}
&\text{a{\rm)} }Z(u,\,v,\,a,\,0)Z(u,\,w,\,a,\,0)=Z(u,\,v+w,\,a,\,0)Z(u,\,0,\,a^2\lan v,\,w\ran),\\
&\text{b{\rm)} }Z(u,\,v,\,a,\,0)Z(u,\,v,\,b,\,0)=Z(u,\,v,\,a+b,\,0).
\end{align*}
\end{lm}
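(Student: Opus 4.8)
The plan is to reduce both identities to the case $u=e_i$ by conjugation, and there to rewrite every short-root generator as a $Y(e_i,\,-,\,-)$ via Lemma~\ref{x=y}, so that the products are governed by the addition formula of Lemma~\ref{y-add}. Since $u$ is a column of a symplectic elementary matrix, I would choose $g\in\St\!\Sp_{2l}(R)$ with $\phi(g)e_i=u$ and put $v'=\phi(g)\inv v$, $w'=\phi(g)\inv w$. As $\phi(g)$ preserves the form, $\lan e_i,\,v'\ran=\lan u,\,v\ran=0$ and likewise $\lan e_i,\,w'\ran=0$, while $\lan v',\,w'\ran=\lan v,\,w\ran$. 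By Lemma~\ref{short-obvious}~b) one has $\!\,^gZ(e_i,\,v',\,a,\,0)=Z(u,\,v,\,a,\,0)$ and similarly for the remaining short-root factors, and by the corollary to Lemma~\ref{conjugation-by-short} one has $\!\,^gZ(e_i,\,0,\,c)=Z(u,\,0,\,c)$ for every $c\in I$. Applying $\!\,^g$ to an identity proved for $e_i$ therefore yields the identity for $u$, the long-root correction term transforming correctly thanks to $\lan v',\,w'\ran=\lan v,\,w\ran$; so it suffices to treat $u=e_i$.

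For $u=e_i$ the orthogonality $\lan e_i,\,v'\ran=0$ gives $v'_{-i}=0$, so Lemma~\ref{x=y} converts each short-root generator into a $Y$, e.g. $Z(e_i,\,v',\,a,\,0)=Y(e_i,\,v'a,\,0)$; I would also use $Z(e_i,\,0,\,c)=Y(e_i,\,0,\,c)$, established inside the proof of Lemma~\ref{generating}. Then a) becomes $Y(e_i,\,v'a,\,0)Y(e_i,\,w'a,\,0)=Y(e_i,\,(v'+w')a,\,a^2\lan v',\,w'\ran)$ by Lemma~\ref{y-add} (using $\lan v'a,\,w'a\ran=a^2\lan v',\,w'\ran$), and splitting off the long-root factor by the same lemma gives $Y(e_i,\,(v'+w')a,\,0)Y(e_i,\,0,\,a^2\lan v',\,w'\ran)$, which is exactly $Z(e_i,\,v'+w',\,a,\,0)Z(e_i,\,0,\,a^2\lan v',\,w'\ran)$. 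Part b) is the same computation with $w'=v'$ and third arguments $a$, $b$: here the cross term vanishes, $\lan v'a,\,v'b\ran=ab\lan v',\,v'\ran=0$, so $Y(e_i,\,v'a,\,0)Y(e_i,\,v'b,\,0)=Y(e_i,\,v'(a+b),\,0)=Z(e_i,\,v',\,a+b,\,0)$.

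The calculations are short, so the main obstacle is organizational: one must apply the conjugation reduction consistently to both the short-root and the long-root factors, and check well-definedness of the auxiliary generators. The latter is automatic for the $Y(e_i,\,-,\,-)$ coming from Lemma~\ref{x=y}, while the long-root arguments such as $a^2\lan v',\,w'\ran$ lie in $\Gamma=I$ precisely because we are in the maximal case; the vanishing of the cross term in b) rests only on $\lan v',\,v'\ran=0$.
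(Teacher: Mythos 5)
Your proposal is correct and follows essentially the same route as the paper, whose proof is just the instruction to ``use the same trick as in Lemma~\ref{x-long-is-three-short}'': conjugate by some $g$ with $\phi(g)e_i=u$ to reduce to the base vector, then translate the short-root generators into $Y(e_i,\,-,\,-)$ via Lemma~\ref{x=y} and conclude with the addition formula of Lemma~\ref{y-add}. Your write-up simply makes explicit the details (equivariance of the $Z$'s, $v'_{-i}=0$, the vanishing cross term in b)) that the paper leaves to the reader.
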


\begin{proof}
Use the same trick as in Lemma~\ref{x-long-is-three-short}.
\end{proof}

\begin{df}
For $u$, $v\in V$ such that $\lan u,\,v\ran=0$, $a$, $b\in I$, set
$$
Z(u,\,v,\,a,\,b)=Z(u,\,v,\,a,\,0)Z(u,\,0,\,b).
$$
\end{df}

\begin{lm}
\label{p-relations}
Assume that $u$, $u'$ are columns of symplectic elementary matrices, and let $v$, $v'$, $w\in V$ be arbitrary coloumns such that $\lan u,\,v\ran=\lan u,\,w\ran=0$, and $\lan u',\,v'\ran=0$. Then for any $a$, $a'$, $b$, $b'\in I$, $r\in R$ one has
\begin{align*}
&\text{a{\rm)} }Z(u,\,vr,\,a,\,b)=Z(u,\,v,\,ar,\,b),\\
&\text{b{\rm)} }Z(u,\,v,\,a,\,b)Z(u,\,w,\,a,\,c)=Z(u,\,v+w,\,a,\,b+c+a^2\lan v,\,w\ran),\\
&\text{c{\rm)} }Z(u,\,v,\,a,\,0)Z(u,\,v,\,b,\,0)=Z(u,\,v,\,a+b,\,0),\\
&\text{d{\rm)} }Z(u,\,v,\,a,\,0)=Z(v,\,u,\,a,\,0),\\
&\begin{aligned}\text{e{\rm)} }Z(u',\,v',\,a',\,b')Z(u,\,v,\,a,\,b&)Z(u',\,v',\,a',\,b')\inv=\\&=Z(T(u',\,v'a',\,b')u,\,T(u',\,v'a',\,b')v,\,a,\,b),\end{aligned}\\
&\text{f{\rm)} }Z(u,\,u,\,a,\,0)=Z(u,\,0,\,0,\,2a),\\
&\text{g{\rm)} }Z(v+ur,\,0,\,0,\,a)=Z(v,\,0,\,0,\,a)Z(u,\,0,\,0,\,ar^2)Z(u,\,v,\,ar,\,0).
\end{align*}
\end{lm}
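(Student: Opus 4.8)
The plan is to reduce every part to the arithmetic of the elements $Y(e_i,\cdot,\cdot)$, for which Lemmas~\ref{y-add}, \ref{long-additivity} and~\ref{x=y} provide all the identities I need. Several parts cost almost nothing: part~c) is Lemma~\ref{short-additivity}b) and part~d) is Lemma~\ref{short-obvious}a). For~a), commutativity of $R$ gives $(vr)a=v(ar)$, so the corollary to Lemma~\ref{x=y} yields $Z(u,vr,a,0)=Z(u,v,ar,0)$; multiplying on the right by $Z(u,0,b)$ gives~a). For~f) and~g) I would first note that $Z(w,0,0)=1$ by Lemma~\ref{long-additivity}, whence $Z(w,0,0,c)=Z(w,0,c)$. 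Then~f) is the equality of $Z(u,u,a,0)=Z(u,0,2a)$ (Lemma~\ref{short-obvious}c) with $b=1$) and $Z(u,0,0,2a)=Z(u,0,2a)$, while~g) becomes $Z(v+ur,0,a)=Z(v,0,a)Z(u,0,ar^2)Z(u,v,ar,0)$, which is Lemma~\ref{x-long-is-three-short} with $u$ and $v$ interchanged.

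For part~b), applying Lemma~\ref{short-obvious}b) to the factor $Z(u,v,a,0)$ and the corollary to Lemma~\ref{conjugation-by-short} to the factor $Z(u,0,b)$ gives the four-argument covariance
\[
\!\,^gZ(u,v,a,b)=Z(\phi(g)u,\,\phi(g)v,\,a,\,b),\qquad g\in\St\!\Sp_{2l}(R).
\]
Since $u$ is a column of a symplectic elementary matrix, I may pick $g$ with $\phi(g)e_i=u$ and conjugate b) into the case $u=e_i$, where the hypotheses force $v_{-i}=w_{-i}=0$. There Lemma~\ref{x=y} identifies $Z(e_i,v,a,0)=Y(e_i,va,0)$ and $Z(e_i,0,b)=Y(e_i,0,b)$, so both factors collapse to elements $Y(e_i,\cdot,\cdot)$; two applications of Lemma~\ref{y-add} together with $\langle va,wa\rangle=a^2\langle v,w\rangle$ (commutativity again) give exactly the $u=e_i$ case, and conjugating back by $g$ finishes~b) since $\phi(g)$ preserves $\langle\ ,\ \rangle$.

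The real work is~e). Here I would first compute $\varphi\bigl(Z(u',v',a',b')\bigr)=T(u',v'a',b')$: expanding $Z(u',v',a',0)$ as a product of three long-root elements and applying Lemma~\ref{esd-properties}, the composition $T(u',0,-a')T(v',0,-a')T(u'+v',0,a')$ collapses to $T(v',u'a',0)=T(u',v'a',0)$ (its third part), while $Z(u',0,b')$ contributes $T(u',0,b')$, combined by its second part. Consequently the right-hand side of~e) equals $\!\,^gZ(u,v,a,b)$ for any absolute $g$ with $\phi(g)=\varphi\bigl(Z(u',v',a',b')\bigr)$, by the covariance above, and~e) is equivalent to saying that conjugation by the relative element $Z(u',v',a',b')$ coincides with the action of such a $g$. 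To see this I would choose $g_0$ with $\phi(g_0)e_i=u'$, write $Z(u',v',a',b')=\!\,^{g_0}Z(e_i,\phi(g_0)\inv v',a',b')$ by covariance, and thereby reduce to conjugation by $Z(e_i,\hat v,a',b')$ with $\hat v=\phi(g_0)\inv v'$; by Lemmas~\ref{x=y} and~\ref{y-decomposition} this element is a product of the elementary relative generators $Y_{k,-i}(\cdot)$ and $Y_{i,-i}(\cdot)$.

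Everything then rests on the single-generator identity
\[
\!\,^{Y_{pq}(c)}Z(u,v,a,b)=Z\bigl(T_{pq}(c)u,\,T_{pq}(c)v,\,a,\,b\bigr),
\]
whose right-hand side is again of the same shape, so that it can be iterated over the factors of $Z(e_i,\hat v,a',b')$; assembling the resulting transvections into $\varphi\bigl(Z(e_i,\hat v,a',b')\bigr)=T(e_i,\hat va',b')$ and conjugating back by $g_0$ (using the fourth part of Lemma~\ref{esd-properties} and $\phi(g_0)\hat v=v'$) produces $T(u',v'a',b')$ and completes~e). This single-generator identity is where KL7 enters and is the hard part: it lets one trade conjugation by the relative generator $Y_{pq}(c)$ for the absolute action of $X_{pq}(c)$, after which covariance closes the computation. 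The delicacy is that $Z(u,v,a,b)$ is assembled from generators twisted by absolute elements $X(e_i,u,0)$, so one must verify that relative conjugation by $Y_{pq}(c)$ and the absolute action of $X_{pq}(c)$ agree not merely on the standard generators $Y_{hk}(\cdot)$ but on each twisted building block; checking this compatibility is the step demanding the most care, and it is what makes~e) genuinely harder than its absolute counterpart.
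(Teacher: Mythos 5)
Your proposal is correct and, for parts a), c), d), f), g), cites exactly the lemmas that the paper's one-line proof (``the rest was already checked'') is pointing to; for b), your reduction to $u=e_i$ via the four-argument covariance followed by Lemma~\ref{y-add} is the same trick by which the paper proves Lemma~\ref{short-additivity}, the lemma it invokes here, so the routes coincide. The only place you stop short is the ``single-generator identity'' in e): you correctly isolate it as the crux but leave open what you call the compatibility of relative conjugation by $Y_{pq}(c)$ with the absolute action of $X_{pq}(c)$ on the ``twisted building blocks'' of $Z(u,\,v,\,a,\,b)$. That step is a formality, not a delicacy. By KL7 one has $\!\,^{X_{ij}(r)}Y_{hk}(a)=Y_{ij}(r)\,Y_{hk}(a)\,Y_{ij}(r)\inv$, and iterating this over a word for $g$ shows that every twisted generator $\!\,^{g}Y_{hk}(a)$ is already a product of untwisted generators $Y_{pq}(\cdot)$; hence $\St\!\Sp_{2l}(R,\,I)$ is generated by the $Y_{pq}(\cdot)$ alone, and the two automorphisms you wish to compare agree on this generating set and therefore everywhere --- there is nothing extra to verify on the individual building blocks. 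With that observation your argument for e) closes exactly as you describe: conjugation by $Z(u',\,v',\,a',\,b')$ equals the action of any absolute lift of $\varphi\big(Z(u',\,v',\,a',\,b')\big)=T(u',\,v'a',\,b')$ (your computation of this image via Lemma~\ref{esd-properties} is correct), and the four-argument covariance finishes the computation.
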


\begin{proof}
For {\it b}) use Lemmas~\ref{short-additivity} and \ref{long-additivity}, the rest was already checked.
\end{proof}

\begin{df}
Let the relative symplectic van der Kallen group $\St\!\Sp^*_{2l}(R,\,I)$ be the group defined by the set of generators 
\begin{multline*}
\big\{(u,\,v,\,a,\,b)\in V\times V\times I\times I\,\big|\,\text{$u$ is a column of }\\ \text{a symplectic elementary matrix},\ \lan u,\,v\ran=0\big\}
\end{multline*}
and relations
\setcounter{equation}{0}
\renewcommand{\theequation}{T\arabic{equation}}
\begin{align}
&(u,\,vr,\,a,\,b)=(u,\,v,\,ar,\,b)\text{ for any $r\in R$},\\
&(u,\,v,\,a,\,b)(u,\,w,\,a,\,c)=(u,\,v+w,\,a,\,b+c+a^2\lan v,\,w\ran),\\
&(u,\,v,\,a,\,0)(u,\,v,\,b,\,0)=(u,\,v,\,a+b,\,0),\\
&\begin{aligned}(u,\,v,\,a,\,0)=(v,\,u,\,a,\,0)\,\text{ for $v$ a co}&\text{lumn of}\\&\text{a symplectic elementary matrix,}\end{aligned}\\
&\begin{aligned}(u',\,v',\,a',\,b')(u,\,v,\,a,\,b)(u'&,\,v',\,a',\,b')\inv=\\&=(T(u',\,v'a',\,b')u,\,T(u',\,v'a',\,b')v,\,a,\,b),\end{aligned}\\
&(u,\,u,\,a,\,0)=(u,\,0,\,0,\,2a),\\
&\begin{aligned}(u+vr,\,0,\,0,\,a)=(u,\,0,\,0,\,a)(v,\,0,\,0,\,ar^2)(v,\,u,\,ar,\,0)\text{ for $v$, $u+vr$}&\\ \text{also columns of symplectic elementary matrices}.&\end{aligned}
\end{align}
\end{df}

\begin{rk}
Clearly, Lemma~\ref{p-relations} amounts to the existence of a homomorphism 
$$\varpi\colon\St\!\Sp^*_{2l}(R,\,I)\epi\St\!\Sp_{2l}(R,\,I),$$
sending $(u,\,v,\,a,\,b)$ to $Z(u,\,v,\,a,\,b)$.
\end{rk}

\begin{lm}
Any triple $(u,\,v,\,a)\in V\times V\times R$ defines a homomorphism $$\alpha_{u,v,a}\colon\St\!\Sp^*_{2l}(R,\,I)\rightarrow\St\!\Sp^*_{2l}(R,\,I)$$ sending generators $(u',\,v',\,a',\,b')$ to $(T(u,\,v,\,a)u',\,T(u,\,v,\,a)v',\,a',\,b')$.
\end{lm}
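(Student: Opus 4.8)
The plan is to invoke the universal property of a group presented by generators and relations: the rule $(u',\,v',\,a',\,b')\mapsto(T(u,\,v,\,a)u',\,T(u,\,v,\,a)v',\,a',\,b')$ extends to a homomorphism precisely when it carries each defining relation T0--T6 to a consequence of the defining relations in the target. Throughout I abbreviate $g=T(u,\,v,\,a)$, which by Lemma~\ref{esd-properties}(1) belongs to $\Sp(V)$; in particular $g$ is $R$-linear and preserves the form $\lan\ ,\ \ran$.

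First I would check that the rule sends generators to generators. The orthogonality condition is preserved, since $\lan gu',\,gv'\ran=\lan u',\,v'\ran=0$; and the scalar arguments $a',\,b'$ stay in $I$. The one substantive point here is that left multiplication by $g$ must map columns of symplectic elementary matrices to columns of symplectic elementary matrices. This holds because $g$ lies in $\Ep_{2l}(R)$: bringing $u$ to a base vector and using Lemma~\ref{unipotent-decomposition} exhibits $g$ as a product of elementary transvections, and the set of such columns is visibly stable under $\Ep_{2l}(R)$. Consequently each target $(gu',\,gv',\,a',\,b')$ is a legitimate generator, and for the same reasons the \emph{images} of T0--T6 are again genuine instances of T0--T6 for the transformed data.

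Next I would run through the relations. Relations T0 and T1 follow from linearity, $g(v'r)=(gv')r$ and $g(v'+w')=gv'+gw'$, together with $\lan gv',\,gw'\ran=\lan v',\,w'\ran$; relations T2 and T5 are immediate, as $g$ alters neither the scalar arguments nor $0$; and T3, T6 reduce to the image relations once one knows that $gv'$ and $g(u'+v'r)=gu'+(gv')r$ are again columns of symplectic elementary matrices, which was arranged above. The only relation carrying real content is the conjugation relation T4: applying the rule to both sides and invoking T4 for the images, one is reduced to the identity of automorphisms
$$
T\big(gu',\,(gv')a',\,b'\big)\,g=g\,T(u',\,v'a',\,b'),
$$
which is exactly the equivariance $g\,T(w,\,z,\,c)\,g\inv=T(gw,\,gz,\,c)$ of Lemma~\ref{esd-properties}(4) (take $w=u'$, $z=v'a'$, $c=b'$, and note $g(v'a')=(gv')a'$). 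Evaluating the two sides on the vectors being conjugated then yields the desired agreement.

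The main obstacle is precisely T4: it is the sole relation in which the inner ESD-transformation must be commuted past $g$, and handling it cleanly rests entirely on the conjugation formula of Lemma~\ref{esd-properties}(4). A secondary but essential point is the well-definedness step of the second paragraph, namely that $g$ preserves the set of columns of symplectic elementary matrices; without it the target quadruples, and hence several of the transformed relations, would not even be meaningful.
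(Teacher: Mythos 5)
Your proof is correct and takes essentially the same route as the paper: every relation is carried to an instance of itself by linearity and form-invariance of $g=T(u,\,v,\,a)$, except the conjugation relation, which reduces exactly to the equivariance $g\,T(u',\,v'a',\,b')g\inv=T(gu',\,(gv')a',\,b')$ of Lemma~\ref{esd-properties}, part~d). Two cosmetic remarks: your labels are shifted by one against the paper's (its relations run T1--T7, so your ``T4'' is its T5), and your extra verification that $g$ preserves columns of symplectic elementary matrices --- a well-definedness point the paper passes over in silence --- relies on $u$ itself being such a column with $\lan u,\,v\ran=0$, which is indeed the only case needed for the subsequent lemma.
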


\begin{proof}
To show that $\alpha_{u,v,a}$ is well-defined we have to check that T1--T7 hold for the images of the generators. All of them are obvious, except T5, which is checked below. 
\begin{multline*}
(T(u,\,v,\,a)u',\,T(u,\,v,\,a)v',\,a',\,b')(T(u,\,v,\,a)u'',\,T(u,\,v,\,a)v'',\,a'',\,b'')\cdot\\
\cdot(T(u,\,v,\,a)u',\,T(u,\,v,\,a)v',\,a',\,b')\inv=\\
=(T(T(u,\,v,\,a)u',\,T(u,\,v,\,a)v'a',\,b')T(u,\,v,\,a)u'',\\
T(T(u,\,v,\,a)u',\,T(u,\,v,\,a)v'a',\,b')T(u,\,v,\,a)v'',\,a'',\,b'')=\\
=(T(u,\,v,\,a)T(u',\,v',\,a')u'',\,T(u,\,v,\,a)T(u',\,v',\,a')v'',\,a'',\,b'').
\end{multline*}
\end{proof}

\begin{lm}
There exists a well-defined homomorphism $$\St\!\Sp_{2l}(R)\rightarrow\mathrm{Aut}\,(\St\!\Sp^*_{2l}(R,\,I))$$ sending $X(u,\,v,\,a)$ to $\alpha_{u,v,a}$, i.e., absolute Steinberg group acts on relative van der Kallen group.
\end{lm}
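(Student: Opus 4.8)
The plan is to invoke the universal property of the another presentation of $\St\!\Sp_{2l}(R)$ from~\cite{L3} recalled in the introduction: that group is generated by the symbols $X(u,\,v,\,a)$ subject only to the relations P1--P3, so a group homomorphism $\St\!\Sp_{2l}(R)\to\mathrm{Aut}(\St\!\Sp^*_{2l}(R,\,I))$ is completely determined by the images of the generators, provided those images are automorphisms and satisfy P1--P3. By the previous lemma each $\alpha_{u,v,a}$ is already a well-defined endomorphism, so the argument reduces to checking invertibility and the three relations. The crucial observation is that $\alpha_{u,v,a}$ acts on a generator $(u',\,v',\,a',\,b')$ only through the symplectic transformation $T(u,\,v,\,a)$, leaving the slots $a'$, $b'$ untouched; hence the composite of two such endomorphisms is governed by the product of the corresponding ESD-transformations inside $\Sp(V)$, and the identities of Lemma~\ref{esd-properties} transfer verbatim to the $\alpha$'s.

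First I would record the composition rule. Applying $\alpha_{u,w,b}$ and then $\alpha_{u,v,a}$ to $(u',\,v',\,a',\,b')$ replaces $u'$, $v'$ by $T(u,\,v,\,a)T(u,\,w,\,b)u'$ and $T(u,\,v,\,a)T(u,\,w,\,b)v'$; by Lemma~\ref{esd-properties}(b) this product equals $T(u,\,v+w,\,a+b+\lan v,\,w\ran)$, so $\alpha_{u,v,a}\circ\alpha_{u,w,b}=\alpha_{u,v+w,a+b+\lan v,\,w\ran}$. This is exactly relation P1, and it also settles invertibility: the Remark after the ESD definition gives $T(u,\,0,\,0)=1$, whence $\alpha_{u,0,0}=\mathrm{id}$, and since $T(u,\,v,\,a)\inv=T(u,\,-v,\,-a)$ the composition rule yields $\alpha_{u,v,a}\circ\alpha_{u,-v,-a}=\alpha_{u,0,0}=\mathrm{id}$, so each $\alpha_{u,v,a}$ is an automorphism with inverse $\alpha_{u,-v,-a}$.

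For the remaining relations, P2 is immediate from Lemma~\ref{esd-properties}(c): since $T(u,\,va,\,0)=T(v,\,ua,\,0)$, the endomorphisms $\alpha_{u,va,0}$ and $\alpha_{v,ua,0}$ coincide on the nose. For P3 I would evaluate the conjugate $\alpha_{u',v',b}\circ\alpha_{u,v,a}\circ\alpha_{u',v',b}\inv$ on an arbitrary generator; by the composition rule and the inverse just computed it acts through the element $T(u',\,v',\,b)T(u,\,v,\,a)T(u',\,v',\,b)\inv$ of $\Sp(V)$, which by Lemma~\ref{esd-properties}(a) and~(d) equals $T(T(u',\,v',\,b)u,\,T(u',\,v',\,b)v,\,a)$. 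Hence the conjugate is $\alpha_{T(u',v',b)u,\,T(u',v',b)v,\,a}$, which is precisely relation P3. With P1--P3 verified and every $\alpha_{u,v,a}$ an automorphism, the universal property yields the desired homomorphism.

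I expect no genuine obstacle here: the entire content is carried by Lemma~\ref{esd-properties}, and the only care required is bookkeeping about the order of composition --- the map must send a product in $\St\!\Sp_{2l}(R)$ to the composite of automorphisms taken in the same order --- together with the observation that the $a'$, $b'$ entries of a generator are inert under every $\alpha$, so that no relation among the $T$'s can be obstructed by the relative data.
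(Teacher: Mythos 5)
Your proposal is correct and follows essentially the same route as the paper: verify P1--P3 for the $\alpha_{u,v,a}$ by reducing each to the corresponding identity for ESD-transformations in Lemma~\ref{esd-properties}, noting that the $a'$, $b'$ slots are inert. The paper writes out only the P1 computation and leaves P2 and P3 to the reader, whereas you additionally spell out P2, P3 and the invertibility of each $\alpha_{u,v,a}$, all correctly.
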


\begin{proof}
We need to verify that P1--P3 hold for $\alpha_{u,v,a}$. We check P1 below, P2 and P3 are left to the reader.
\begin{multline*}
\alpha_{u,v,a}\alpha_{u,w,b}(u',\,v',\,a',\,b')=\alpha_{u,v,a}(T(u,\,w,\,b)u',\,T(u,\,w,\,b)v',\,a',\,b')=\\
=(T(u,\,v,\,a)T(u,\,w,\,b)u',\,T(u,\,v,\,a)T(u,\,w,\,b)v',\,a',\,b')=\\
=(T(u,\,v+w,\,a+b+\lan v,\,w\ran)u',\,T(u,\,v+w,\,a+b+\lan v,\,w\ran)v',\,a',\,b')=\\
=\alpha_{u,v+w,a+b+\lan v,\,w\ran}(u',\,v',\,a',\,b').
\end{multline*}
\end{proof}

\begin{rk}
Notice that $\varpi$ preserves the action of $\St\!\Sp_{2l}(R)$.
\end{rk}

\begin{df}
Set
\begin{align*}
&\!\,_{ij}(a)=(e_{-j},\,e_{i},\,a\eps{-j},\,0) \text{ for } i\not\in\{\pm j\},\\ 
&\!\,_{i,\,-i}(a)=(e_i,\,0,\,0,\,a).
\end{align*}
\end{df}

\begin{lm}
\label{st-relations}
Steinberg relations {\rm KL0}--{\rm KL7} hold for $\!\,_{ij}(a)$ and $\!\,_{i,-i}(a)$.
\end{lm}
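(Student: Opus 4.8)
The plan is to verify each of KL0--KL7 directly inside $\St\!\Sp^*_{2l}(R,\,I)$, reading every $Y$-symbol as the corresponding generator $\!\,_{ij}(a)$ or $\!\,_{i,-i}(a)$ and every action of an absolute generator through the action homomorphism. The one preliminary I want to isolate is the concrete shape of that action: since $X_{ij}(r)=X(e_i,\,e_{-j}r\eps{-j},\,0)$ and $X_{i,-i}(r)=X(e_i,\,0,\,r)$ by the \emph{another presentation}~\cite{L3}, the element $X_{ij}(r)$ acts as $\alpha_{e_i,\,e_{-j}r\eps{-j},\,0}$ and $X_{i,-i}(r)$ as $\alpha_{e_i,\,0,\,r}$, so by the definition of $\alpha$ both act on a generator $(u',\,v',\,a',\,b')$ by applying the transvection $T_{ij}(r)$, respectively $T_{i,-i}(r)$, to the two vector slots while fixing the scalars. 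I would also record the bookkeeping facts that follow at once from T1--T3 and T5: that $(u,\,0,\,a,\,0)=1$, that $(u,\,v,\,a,\,b)=(u,\,v,\,a,\,0)(u,\,0,\,0,\,b)$ and $(u,\,v,\,a,\,b)\inv=(u,\,-v,\,a,\,-b)$, and the commutation criterion read off T5, namely that conjugation by $(u',\,v',\,a',\,b')$ applies $T(u',\,v'a',\,b')$ to the vector slots, so two generators commute whenever this transvection fixes the vectors of the second.

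With these at hand, KL0 is T4 together with $\eps{-j}=-\eps j$ (using T1 to absorb the sign into the vector), and KL1 is immediate from T3. For KL7 I would compare the outer action of $X_{ij}(a)$, which applies $T_{ij}(a)$, with inner conjugation by $\!\,_{ij}(a)=(e_{-j},\,e_i,\,a\eps{-j},\,0)$, which by T5 applies $T(e_{-j},\,e_ia\eps{-j},\,0)$; by Lemma~\ref{esd-properties}(c) the latter equals $T(e_i,\,e_{-j}a\eps{-j},\,0)=T_{ij}(a)$, so the two agree on every generator, and the long-root case is identical via $T(e_i,\,0,\,a)=T_{i,-i}(a)$.

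The relations KL2--KL6 I would then settle by feeding the relevant transvection into the vector slots of the $Y$-generator and collapsing with T1, T2, and, when a long root appears, T6. For KL2 the transvection fixes both vectors, so the bracket is trivial. In KL3 one has $T_{ij}(r)e_j=e_j+e_ir$, and T2 (whose correction term vanishes since $\lan e_j,\,e_i\ran=0$) followed by T1 leaves $(e_{-k},\,e_i,\,ra\eps{-k},\,0)=\!\,_{ik}(ra)$. In KL4 one has $T_{i,-i}(r)e_{-i}=e_{-i}+e_ir\eps i$, and here the quadratic correction in T2 is nonzero and, after splitting off the scalar with the decomposition above, produces exactly the long-root factor $(e_{-j},\,0,\,0,\,-ra^2)=\!\,_{-j,j}(-ra^2)$ alongside $\!\,_{ij}(ra\eps i)$. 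In KL6 the result lands entirely along $e_i$, namely $(e_i,\,e_i,\,ra\eps i,\,0)$, which T6 rewrites as the long root $(e_i,\,0,\,0,\,2ra\eps i)$.

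The genuinely delicate case, which I expect to be the main obstacle, is KL5. Using $[h,\,g\rrbracket=\llbracket g,\,h]\inv$ I would first reduce it to the action of $X_{-i,j}(r)$ on the long-root generator $(e_i,\,0,\,0,\,\alpha)$; applying $T_{-i,j}(r)$ turns this into the long-root generator along $e_i+e_{-j}(r\eps i\eps j)$. The key step is to invoke T7 to split this single long-root generator into the long roots along $e_i$ and along $e_{-j}$ plus a short-root cross term, and then to use the T5-commutation criterion to see that the $e_i$-factor commutes past the other two (because $T_{i,-i}(\alpha)$ fixes both $e_{-j}$ and $e_i$) and so cancels the outer $(e_i,\,0,\,0,\,\alpha)$, leaving $(e_{-j},\,e_i,\,-\alpha r\eps i\eps j,\,0)(e_{-j},\,0,\,0,\,-\alpha r^2)=\!\,_{ij}(\alpha r\eps i)\!\,_{-j,j}(-\alpha r^2)$. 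Tracking the signs from $\eps{-j}=-\eps j$ and checking that the vectors fed to T7 really are columns of symplectic elementary matrices is where the care is needed.
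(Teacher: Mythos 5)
Your proposal is correct and follows essentially the same route as the paper: a direct verification of each relation using the explicit action $\alpha_{u,v,a}$, with T4 for KL0, T3 (plus T2 for the long roots) for KL1, triviality of the transvection for KL2, the T2/T1 collapse for KL3--KL4, T6 for KL6, T5 for KL7, and the T7 splitting of the long-root generator along $T_{-i,j}(r)e_i$ for KL5. The only cosmetic difference is that in KL5 the $e_i$-factor produced by T7 is already adjacent to the outer $(e_i,\,0,\,0,\,\alpha)$, so no commutation argument is needed there, while the two remaining $e_{-j}$-factors do need to be swapped (they commute since $T(e_{-j},\,0,\,-\alpha r^2)$ fixes both $e_{-j}$ and $e_i$).
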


\begin{proof}
To check KL0 use T4, for KL1 use T3 and T2. KL2 follows from the definition of the action. Below we verify the rest.
\begin{multline*}
{\rm (KL3)\ }\llbracket X_{ij}(r),\,\!\,_{jk}(b)]=\\
\phantom{\rm (KL3)\ }=(e_{-k},\,T(e_i,\,e_{-j}r\eps{-j},\,0)e_j,\,b\eps{-k},\,0)(e_{-k},\,e_j,\,b\eps{-k},\,0)\inv=\\
\phantom{\rm (KL3)\ }=(e_{-k},\,e_j+e_ir,\,b\eps{-k},\,0)(e_{-k},\,-e_j,\,b\eps{-k},\,0)=\\
\phantom{\rm (KL3)\ }=(e_{-k},\,e_ir,\,b\eps{-k},\,0)=(e_{-k},\,e_i,\,rb\eps{-k},\,0)=\\=(e_i,\,e_{-k},\,rb\eps{-k},\,0)=\!\,_{ik}(rb);
\end{multline*}
\begin{multline*}
{\rm (KL4)\ }\llbracket X_{i,-i}(r),\,\!\,_{-i,j}(b)]=\\
\phantom{\rm (KL3)\ }=(e_{-j},\,T(e_i,\,0,\,r)e_{-i},\,b\eps{-j},\,0)(e_{-j},\,e_{-i},\,b\eps{-j},\,0)\inv=\\
\phantom{\rm (KL3)\ }=(e_{-j},\,e_{-i}+e_ir\eps i,\,b\eps{-j},\,0)(e_{-j},\,-e_{-i},\,b\eps{-j},\,0)=\\
\phantom{\rm (KL3)\ }=(e_{-j},\,e_ir\eps i,\,b\eps{-j},\,-rb^2)=\\
\phantom{\rm (KL3)\ }=(e_{-j},\,e_ir\eps i,\,b\eps{-j},\,0)(e_{-j},\,0,\,b\eps{-j},\,-rb^2)=\\
\phantom{\rm (KL3)\ }=(e_{-j},\,e_i,\,rb\eps i\eps{-j},\,0)(e_{-j},\,0,\,0,\,-rb^2)=\!\,_{ij}(rb\eps i)\cdot\!\,_{-j,j}(-rb^2);
\end{multline*}
\begin{multline*}
{\rm (KL5)\ }[\,_{i,-i}(a),\,X_{-i,j}(s)\rrbracket=\\
\phantom{\rm (KL3)\ }=(e_i,\,0,\,0,\,a)(T(e_{-i},\,e_{-j}s\eps{-j},\,0)e_i,\,0,\,0,\,-a)=\\
\phantom{\rm (KL3)\ }=(e_i,\,0,\,0,\,a)(e_i+e_{-j}s\eps{-j}\eps{-i},\,0,\,0,\,-a)=\\
\phantom{\rm (KL3)\ }=(e_i,\,0,\,0,\,a)(e_i,\,0,\,0,\,-a)(e_{-j},\,0,\,0,\,-as^2)(e_{-j},\,e_i,\,-as\eps i\eps j,\,0)=\\
=(e_{-j},\,e_i,\,as\eps i\eps{-j},\,0)(e_{-j},\,0,\,0,\,-as^2)=\!\,_{ij}(as\eps i)\cdot\!\,_{-j,j}(-as^2);
\end{multline*}
\begin{multline*}
{\rm (KL6)\ }\llbracket X_{ij}(r),\,\!\,_{j,-i}(b)]=\\
\phantom{\rm (KL3)\ }=(e_i,\,T(e_i,\,e_{-j}r\eps{-j},\,0)e_j,\,b\eps i,\,0)(e_i,\,-e_j,\,b\eps i,\,0)=\\
\phantom{\rm (KL3)\ }=(e_i,\,e_j+e_ir,\,b\eps i,\,0)(e_i,\,-e_j,\,b\eps i,\,0)=(e_i,\,e_ir,\,b\eps i,\,0)=\\
=(e_i,\,e_i,\,rb\eps i,\,0)=(e_i,0,\,0,\,2rb\eps i)=\!\,_{i,-i}(2rb\eps i).
\end{multline*}
The definition of the action together with T5 imply KL7. 
\end{proof}

\begin{cl*}
There is a homomorphism $$\varrho\colon\St\!\Sp_{2l}(R,\,I)\rightarrow\St\!\Sp^*_{2l}(R,\,I)$$ sending $Y_{ij}(a)$ to $\!\,_{ij}(a)$ and preserving the action of $\St\!\Sp_{2l}(R)$. Obviously, $\varpi\varrho=1$, i.e. $\varrho$ is a splitting for $\varpi$.
\end{cl*}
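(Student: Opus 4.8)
The plan is to construct $\varrho$ straight from the presentation of $\St\!\Sp_{2l}(R,\,I)$. That group was defined as a quotient of the free group on the symbols $(g,\,x)={}^gx$, with $g\in\St\!\Sp_{2l}(R)$ and $x$ a relative generator, modulo KL0--KL7, the action being ${}^f(g,\,x)=(fg,\,x)$. Since the foregoing lemmas endow $\St\!\Sp^*_{2l}(R,\,I)$ with an action of $\St\!\Sp_{2l}(R)$ (with $X(u,\,v,\,a)$ acting as $\alpha_{u,v,a}$), a homomorphism preserving the action is precisely a $\St\!\Sp_{2l}(R)$-equivariant one. Thus it suffices to prescribe the images of the relative generators and extend equivariantly by ${}^gx\mapsto{}^g\varrho(x)$; this recipe always yields an equivariant homomorphism $\tilde\varrho$ out of the free group, which descends to $\St\!\Sp_{2l}(R,\,I)$ exactly when it kills the normal closure of KL0--KL7.

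Concretely, I would set $\varrho(Y_{ij}(a))=\!\,_{ij}(a)$ and $\varrho(Y_{i,-i}(\alpha))=\!\,_{i,-i}(\alpha)$. By the universal property just described, this extends to a well-defined homomorphism exactly when the images satisfy KL0--KL7 in $\St\!\Sp^*_{2l}(R,\,I)$, where the brackets $\llbracket\,\cdot\,,\,\cdot\,]$ and $[\,\cdot\,,\,\cdot\,\rrbracket$ are read off the action of $\St\!\Sp_{2l}(R)$ on the target. This is exactly Lemma~\ref{st-relations}, so nothing further need be computed; equivariance of $\varrho$ then holds by construction, so $\varrho$ automatically preserves the action.

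It remains to check $\varpi\varrho=1$. As both $\varpi$ and $\varrho$ preserve the action, so does $\varpi\varrho$; since the orbit $\{{}^gY\}$ of the relative generators generates $\St\!\Sp_{2l}(R,\,I)$, it is enough to verify $\varpi\varrho=\mathrm{id}$ on the generators $Y$. For a short root generator, $\varpi\varrho(Y_{ij}(a))=Z(e_{-j},\,e_i,\,a\eps{-j},\,0)$, which by Lemma~\ref{x=y} equals $Y(e_{-j},\,e_ia\eps{-j},\,0)$; as $\eps{-j}=-\eps j$, this is $Y(e_{-j},\,-e_ia\eps j,\,0)=Y_{ij}(a)$ by the corollary to Lemma~\ref{y-decomposition}. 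For a long root generator, unwinding the definitions of the four- and three-argument $Z$ gives $\varpi\varrho(Y_{i,-i}(a))=Z(e_i,\,0,\,0,\,a)=Z(e_i,\,0,\,a)=Y(e_i,\,0,\,a)=Y_{i,-i}(a)$, the last two equalities being as in the proof of Lemma~\ref{generating} and the corollary to Lemma~\ref{y-decomposition}. Hence $\varpi\varrho$ fixes the generators and equals the identity, so $\varrho$ splits $\varpi$.

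I expect no genuine obstacle here: Lemma~\ref{st-relations} already carries the entire computational burden, and the corollary is purely formal given the chosen presentation. The one point that demands care is the precise formulation of the universal property of the relative Steinberg group as a group-with-action---namely that checking KL0--KL7 in the target is legitimate precisely because those relations are built from the action brackets, which Lemma~\ref{st-relations} interprets in $\St\!\Sp^*_{2l}(R,\,I)$ exactly as the equivariant extension requires.
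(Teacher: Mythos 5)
Your proposal is correct and takes essentially the same route as the paper: the existence and equivariance of $\varrho$ are exactly the universal property of the presentation KL0--KL7 applied to Lemma~\ref{st-relations}, and $\varpi\varrho=1$ is the generator check via Lemma~\ref{x=y} and the corollary to Lemma~\ref{y-decomposition} that the paper leaves as ``obvious.'' Nothing further is needed.
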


\begin{lm}
\label{vdk-unipotent-decomposition}
For $v\in V$ such that $v_{-i}=0$, set 
$$\tilde v_-=\sum_{\stackrel{i\not\in\{\pm j\}}{i<0}}e_iv_i\qquad\text{and similarly}\qquad \tilde v_+=\sum_{\stackrel{i\not\in\{\pm j\}}{i>0}}e_iv_i.$$ 
Then
\begin{multline*}
(e_i,\,v,\,a,\,b)=\\
=\!\,_{i,-i}(b+2av_i-a^2\lan\tilde v_-,\,\tilde v_+\ran)\prod_{\stackrel{i\not\in\{\pm j\}}{i<0}}\!\,_{j,-i}(av_j\eps i)\prod_{\stackrel{i\not\in\{\pm j\}}{i>0}}\!\,_{j,-i}(av_j\eps i).
\end{multline*}
\end{lm}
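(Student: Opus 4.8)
The plan is to work entirely inside $\St\!\Sp^*_{2l}(R,\,I)$ and to derive the formula from the defining relations T1--T7, in exactly the way Lemma~\ref{y-decomposition} was obtained from Lemma~\ref{unipotent-decomposition} in the Steinberg group; the new ingredient is that T6 now supplies the diagonal contribution $2av_i$. Throughout I read the summation/product index as $j$ ranging over all indices different from $\pm i$, so that $\tilde v_-=\sum_{j<0,\ j\neq\pm i}e_jv_j$, $\tilde v_+=\sum_{j>0,\ j\neq\pm i}e_jv_j$ and $v=e_iv_i+\tilde v_-+\tilde v_+$ (recall $v_{-i}=0$); by symmetry I may assume $i>0$. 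First I would record two elementary consequences of the relations: putting $v=0$, $r=0$ in T1 gives $(u,\,0,\,a,\,b)=(u,\,0,\,0,\,b)$ for every $a$, and T2 with $a=0$, $v=w=0$ gives the long-root additivity $(u,\,0,\,0,\,s)(u,\,0,\,0,\,t)=(u,\,0,\,0,\,s+t)$.

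Next I would treat the three summands of $v$ separately. For the off-diagonal coordinates T1 identifies $(e_i,\,e_jv_j,\,a,\,0)=(e_i,\,e_j,\,av_j,\,0)=\!\,_{j,-i}(av_j\eps i)$ for each $j\neq\pm i$. Since $\lan e_j,\,e_k\ran=0$ whenever $j$ and $k$ have the same sign, repeated use of T2 shows that $\prod_{j<0,\ j\neq\pm i}(e_i,\,e_jv_j,\,a,\,0)$ telescopes, with no fourth-slot corrections, to $(e_i,\,\tilde v_-,\,a,\,0)$, and likewise the positive product equals $(e_i,\,\tilde v_+,\,a,\,0)$; one further application of T2 multiplies these to $(e_i,\,\tilde v_-+\tilde v_+,\,a,\,a^2\lan\tilde v_-,\,\tilde v_+\ran)$. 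For the diagonal coordinate, T1 turns $(e_i,\,e_iv_i,\,a,\,b_1)$ into $(e_i,\,e_i,\,av_i,\,b_1)$, then T6 collapses $(e_i,\,e_i,\,av_i,\,0)=(e_i,\,0,\,0,\,2av_i)$, and the two elementary identities above combine everything into the long-root generator $\!\,_{i,-i}(2av_i+b_1)$.

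Finally I would reassemble. Because $\lan e_i,\,\tilde v\ran=0$, relation T2 splits $(e_i,\,v,\,a,\,b)$ as $(e_i,\,e_iv_i,\,a,\,b_1)(e_i,\,\tilde v,\,a,\,b_2)$ with $b_1+b_2=b$; choosing $b_2=a^2\lan\tilde v_-,\,\tilde v_+\ran$ makes the second factor equal to the telescoped product of the $\!\,_{j,-i}$'s computed above, while the first factor becomes $\!\,_{i,-i}\big(2av_i+b-a^2\lan\tilde v_-,\,\tilde v_+\ran\big)$, which is exactly the claimed long-root term. The case $i<0$ is identical after interchanging $\tilde v_-$ and $\tilde v_+$, and in either case $\lan\tilde v_-,\,\tilde v_+\ran=\lan v_-,\,v_+\ran$ since $v_{-i}=0$. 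The only genuine work is the bookkeeping of the fourth-slot scalars, and the single point to watch is that the correction $+a^2\lan\tilde v_-,\,\tilde v_+\ran$ created when the negative and positive products meet is precisely cancelled by the choice of $b_2$, leaving the stated $\beta=b+2av_i-a^2\lan\tilde v_-,\,\tilde v_+\ran$.
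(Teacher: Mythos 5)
Your argument is correct and matches the paper's own proof in all essentials: both split off the diagonal coordinate with T2, use T1 to move the scalars $v_j$ into the third slot, invoke T6 for the $2av_i$ contribution, and track the fourth-slot correction $a^2\lan\tilde v_-,\tilde v_+\ran$ arising when the negative and positive products are merged. The only difference is cosmetic (the paper first peels off $(e_i,0,0,b)$ as a whole rather than distributing $b=b_1+b_2$), so there is nothing to add.
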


\begin{proof}
\begin{multline*}
(e_i,\,v,\,a,\,b)=(e_i,\,0,\,a,\,b)(e_i,\,v,\,a,\,0)=\\
=(e_i,\,0,\,0,\,b)(e_i,\,e_iv_i,\,a,\,0)(e_i,\,\tilde v_-+\tilde v_+,\,a,\,0)=\\
=(e_i,\,0,\,0,\,b)(e_i,\,e_i,\,av_i,\,0)(e_i,\,0,\,a,\,-a^2\lan\tilde v_-,\,\tilde v_+\ran)\cdot\\
\cdot(e_i,\,\tilde v_-+\tilde v_+,\,a,\,a^2\lan\tilde v_-,\,\tilde v_+\ran)=\\
=(e_i,\,0,\,0,\,b)(e_i,\,0,\,0,\,2av_i)(e_i,\,0,\,0,\,-a^2\lan\tilde v_-,\,\tilde v_+\ran)\cdot\\
\cdot(e_i,\,\tilde v_-,\,a,\,0)(e_i,\,\tilde v_+,\,a,\,0)=\\
=(e_i,\,0,\,0,\,b+2av_i-a^2\lan\tilde v_-,\,\tilde v_+\ran)\cdot\\
\cdot\prod_{\stackrel{i\not\in\{\pm j\}}{i<0}}(e_i,\,e_jv_j,\,a,\,0)\prod_{\stackrel{i\not\in\{\pm j\}}{i>0}}(e_i,\,e_jv_j,\,a,\,0)=\\
=\!\,_{i,-i}(b+2av_i-a^2\lan\tilde v_-,\,\tilde v_+\ran)\prod_{\stackrel{i\not\in\{\pm j\}}{i<0}}\!\,_{j,-i}(av_j\eps i)\prod_{\stackrel{i\not\in\{\pm j\}}{i>0}}\!\,_{j,-i}(av_j\eps i).
\end{multline*}
\end{proof}

\begin{lm}
$$\varrho\varpi=1$$
\end{lm}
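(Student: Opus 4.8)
The plan is to prove that the endomorphism $\varrho\varpi$ of $\St\!\Sp^*_{2l}(R,\,I)$ fixes every generator $(u,\,v,\,a,\,b)$. Both $\varpi$ and $\varrho$ preserve the action of $\St\!\Sp_{2l}(R)$ (by the remark preceding this lemma and by the corollary introducing $\varrho$), so $\varrho\varpi$ is $\St\!\Sp_{2l}(R)$-equivariant. Since $u$ is a column of a symplectic elementary matrix it lies in the orbit $\Ep_{2l}(R)\cdot e_i$ for some $i$, so there is $g\in\St\!\Sp_{2l}(R)$ with $\phi(g)u=e_i$; the action formula ${}^g(u,\,v,\,a,\,b)=(\phi(g)u,\,\phi(g)v,\,a,\,b)$ (which is the content of the definition of the action together with T5) then gives ${}^g(u,\,v,\,a,\,b)=(e_i,\,\phi(g)v,\,a,\,b)$, where $(\phi(g)v)_{-i}=0$. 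Thus it suffices to treat the standard generators $(e_i,\,w,\,a,\,b)$ with $w_{-i}=0$: once $\varrho\varpi$ is known to fix these, equivariance yields
$$\varrho\varpi(u,\,v,\,a,\,b)={}^{g\inv}\varrho\varpi(e_i,\,\phi(g)v,\,a,\,b)={}^{g\inv}(e_i,\,\phi(g)v,\,a,\,b)=(u,\,v,\,a,\,b).$$

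For a standard generator I would first unwind $\varpi$. Using the definition of $Z(e_i,\,v,\,a,\,b)$, Lemma~\ref{x=y}, and the identity $Z(e_i,\,0,\,b)=Y(e_i,\,0,\,b)=Y_{i,-i}(b)$, one gets
$$\varpi(e_i,\,v,\,a,\,b)=Z(e_i,\,v,\,a,\,0)Z(e_i,\,0,\,b)=Y(e_i,\,va,\,0)\,Y_{i,-i}(b).$$
Then I would decompose $Y(e_i,\,va,\,0)$ by Lemma~\ref{y-decomposition} applied to the vector $va$ (legitimate, as $(va)_{-i}=0$), obtaining the long-root factor $Y_{i,-i}(2av_i-a^2\lan v_-,\,v_+\ran)$ together with the short-root factors $Y_{j,-i}(av_j\eps i)$. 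Applying $\varrho$, which by definition sends $Y_{j,-i}\mapsto\!\,_{j,-i}$ and $Y_{i,-i}\mapsto\!\,_{i,-i}$, turns this into exactly the word produced by Lemma~\ref{vdk-unipotent-decomposition} for $(e_i,\,v,\,a,\,0)$; here one uses $\lan v_-,\,v_+\ran=\lan\tilde v_-,\,\tilde v_+\ran$, which holds because $v_{-i}=0$ means the $\pm i$ components contribute nothing to the form. Hence $\varrho\bigl(Z(e_i,\,v,\,a,\,0)\bigr)=(e_i,\,v,\,a,\,0)$, while $\varrho\bigl(Z(e_i,\,0,\,b)\bigr)=\!\,_{i,-i}(b)=(e_i,\,0,\,0,\,b)$.

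It then remains to reassemble the two pieces. Using T1 in the form $(e_i,\,0,\,0,\,b)=(e_i,\,0,\,a,\,b)$ and afterwards T2 one computes
$$(e_i,\,v,\,a,\,0)(e_i,\,0,\,0,\,b)=(e_i,\,v,\,a,\,0)(e_i,\,0,\,a,\,b)=(e_i,\,v,\,a,\,b),$$
so $\varrho\varpi(e_i,\,v,\,a,\,b)=(e_i,\,v,\,a,\,b)$, completing the reduction. Combined with the already established $\varpi\varrho=1$, this shows $\varpi$ and $\varrho$ are mutually inverse, i.e.\ $\St\!\Sp^*_{2l}(R,\,I)\cong\St\!\Sp_{2l}(R,\,I)$.

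The genuinely delicate point is the equivariant reduction together with the matching of the two canonical decompositions: one must verify that Lemma~\ref{y-decomposition} and Lemma~\ref{vdk-unipotent-decomposition} yield literally the same word in the elementary symbols, in particular that the long-root coefficient agrees once the trailing $Y_{i,-i}(b)$ is merged in. Everything else is a direct substitution, and the final reassembly relies only on the defining relations T1 and T2.
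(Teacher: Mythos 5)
Your argument is correct and follows essentially the same route as the paper: verify $\varrho\varpi$ on the elementary symbols, match the decomposition of $\varpi(e_i,v,a,b)$ from Lemma~\ref{y-decomposition} against Lemma~\ref{vdk-unipotent-decomposition}, and extend to arbitrary $u$ by $\St\!\Sp_{2l}(R)$-equivariance. The only differences are cosmetic (you perform the equivariant reduction first and spell out the matching of the two canonical words, including the centrality of the long-root factor, which the paper compresses into ``by the previous lemma'').
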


\begin{proof}
Obviously, $\varrho\varpi(\,_{ij}(a))=\!\,_{ij}(a)$, then $\varrho\varpi\big((e_i,\,v,\,a,\,b)\big)=(e_i,\,v,\,a,\,b)$ by the previous lemma. For arbitrary column of symplectic elementary matrix $u$ take $g\in\St\!\Sp_{2l}(R)$ such that $\phi(g)e_i=u$ and use that $\varrho$ and $\varpi$ preserve action.
\begin{multline*}
\varrho\varpi\big((u,\,v,\,a,\,b)\big)=\varrho\varpi\big(\!\,^g(e_i,\,\phi(g)\inv v,\,a,\,b)\big)=\\=\!\,^g\varrho\varpi\big((e_i,\,\phi(g)\inv v,\,a,\,b)\big)
=\!\,^g(e_i,\,\phi(g)\inv v,\,a,\,b)=(u,\,v,\,a,\,b).
\end{multline*} 
\end{proof}

\end{document}